\newcommand \commentout[1] {}
\newcommand{\R}{\mathbb{R}}
\newcommand{\Td}{\mathbb{T}^d}
\newcommand{\N}{\mathbb{N}}
\newcommand {\Chi} {{\bf \raise 2pt \hbox{$\chi$}} }
\newcommand {\Div}  { {\rm div} }
\newcommand {\f}   {\frac}
\newcommand {\p}   {\partial}
\newcommand*{\dd}{\mathop{\kern0pt\mathrm{d}}\!{}}
\newcommand*{\DD}{\mathop{\kern0pt\mathrm{D}}\!{}}
\def\epsl{\varepsilon_l}
\DeclarePairedDelimiter{\abs}{\lvert}{\rvert}
\def\norm#1{\left\Vert #1\right\Vert}
\DeclareMathOperator*{\argmin}{argmin}
\DeclareMathOperator*{\esssup}{esssup}
\def\eps{\varepsilon}
\theoremstyle{plain}
\newtheorem*{thm*}{Theorem}
\newtheorem{thm}{\bf Theorem}[section]
\newtheorem{lemma}[thm]{Lemma}
\newtheorem{proposition}[thm]{Proposition}
\theoremstyle{remark}
\newtheorem{remark}[thm]{\bf Remark}
\newtheorem{definition}[thm]{\bf Definition}
\newcommand{\beq}{\begin{equation}}
\newcommand{\eeq}{\end{equation}}
\newcommand{\bea} {\begin{array}{rl}}
\newcommand{\eea} {\end{array}}
\newcommand{\bepa}{\left\{ \begin{array}{l}}
\newcommand{\eepa} {\end{array}\right.}
\newcommand{\diff}{\mathop{}\!\mathrm{d}}
\def\norm#1{\left\Vert#1\right\Vert}
\def\om{\Omega}
\numberwithin{equation}{section}
\title{Weak solutions and sharp interface limit of the anisotropic Cahn-Hilliard equation with disparate mobility and inhomogeneous potential}
\author{Charles Elbar\thanks{Université Claude Bernard Lyon 1, ICJ UMR5208, CNRS, Ecole Centrale de Lyon, INSA Lyon, Université Jean Monnet, 69622
Villeurbanne, France. Email: elbar@math.univ-lyon1.fr} 
\and Andrea Poiatti\thanks{University of Vienna, 1090 Vienna, Austria. Email: andrea.poiatti@univie.ac.at }
}
\date{}
\begin{document}

\maketitle

\begin{abstract}
We study the existence of weak solutions and the corresponding sharp interface limit of an anisotropic Cahn-Hilliard equation with disparate mobility, i.e., the mobility  is degenerate in one of the two pure phases, making the diffusion
in that phase vanish.  The double-well potential is polynomial and is weighted by a spatially inhomogeneous coefficient. In the limit when the parameter of the interface width tends to zero, and under an energy convergence assumption, we prove that the weak solutions converge to $BV$ solutions of a weighted anisotropic Hele-Shaw flow. 
\end{abstract}
\vskip .7cm

\noindent{\makebox[1in]\hrulefill}\newline
2020 \textit{Mathematics Subject Classification.} 35B40, 35K65, 	35G20, 53E40, 76D27.
\newline\textit{Keywords and phrases.} 
Anisotropic Cahn-Hilliard equation; disparate mobility; spatially inhomogeneous potential; sharp interface limit; weighted anisotropic Hele-Shaw flow.

\tableofcontents

\section{Introduction}

The Cahn--Hilliard equation, introduced by J.\,W.~Cahn and J.\,E.~Hilliard in 1958~\cite{cahn_free_1958}, models the phase separation process in binary alloys, known as spinodal decomposition. Unlike models with sharp boundaries, the Cahn--Hilliard equation describes a diffuse interface between phases, whose width is controlled by a parameter \(\eps > 0\). As \(\eps\) tends to zero, interfaces sharpen, eventually forming evolving hypersurfaces. This makes the Cahn--Hilliard model a link between phase-field continuum models and Hele-Shaw (or Mullins-Sekerka) models, with applications in materials science, biology, and other fields.
Initially, the Cahn--Hilliard equation was assumed isotropic, i.e., enjoying uniform physical properties in all directions. However, many real-world systems are anisotropic, with properties like surface tension varying with interface orientation. This is evident in crystalline materials (see, e.g., \cite{CahnTaylor,KOBAYASHI1993410}), where atomic arrangements lead to direction-dependent interface evolution, forming facets or corners. The effect of anisotropies can also be visualized in the numerical simulations of the following Appendix \ref{numerical}. Another application of the anisotropic Cahn–Hilliard equation is to model the growth of thin solid
films, which has a role in the self-organization of nanostructures (cf. \cite{V1,V2}).
Anisotropy is also prevalent in biological contexts, such as tissue mechanics, where properties like stiffness depend on the alignment of structures like collagen fibers. In tumor growth, for instance, the interaction between cancer cells and the extracellular matrix can result in anisotropic tumor shapes~\cite{cristini2009nonlinear}. 

Incorporating anisotropy into the Cahn--Hilliard equation leads to the \textit{anisotropic Cahn--Hilliard equation}. When the mobility is degenerate in one of the pure phases, taking the sharp-interface limit (\(\eps \to 0\)) in this setting results in an anisotropic Hele-Shaw flow, accounting for direction-dependent effects. The presence of a space dependent potential then leads to a \textit{weighted} anisotropic Hele-Shaw flow.

In this work, we investigate the anisotropic Cahn--Hilliard equation with disparate mobility and space dependent potential on a $d$-dimensional torus \(\Omega=\mathbb T^d\), $d=2,3$. The model reads:

\begin{equation}\label{eq:CH}
\partial_t u = \Div(u\nabla\mu), \quad \mu = -\eps\Div(A_{p}(x,\nabla u)) + \frac{1}{\eps} F_{u}(x,u) \quad \text{in } (0,T)\times\Omega,
\end{equation}
or alternatively, introducing the flux $j$, 
\begin{equation}\label{eq:CHj}
\partial_t u = \Div\ j, \quad j=-u\nabla\mu = u\nabla\left(\eps\Div(A_{p}(x,\nabla u)) - \frac{1}{\eps} F_{u}(x,u)\right) \quad \text{in } (0,T)\times\Omega.
\end{equation}

Here, \(T > 0\) is the final time. The parameter \(\eps > 0\) relates to the interface thickness. The function \(A = A(x,p)\) is \(C^1\) in both variables, two-homogeneous in \(p\), and satisfies strong convexity and growth conditions (see Assumption~\eqref{ass:A}). The potential \(F(x,u)\) is a polynomial double-well potential depending on \((x,u)\) and satisfying certain assumptions (see Assumption~\eqref{ass:double_well}), namely it can be factorized as $K(x)W(u)$, where $W(u)$ is the standard double well quartic potential (which is zero at the pure phases $u=0,1$) and $K$ is a $C^1$ strictly positive weight, representing some heterogeneity in the material. The notation \(A_p\) and \(A_x\) denotes the gradients of \(A(x,p)\) with respect to \(p\) and \(x\), respectively, with similar definitions for \(F_u\) and \(F_x\). Observe that the mobility $u$ in front of $\nabla \mu$ degenerates only in one phase, that is $u=0$. This is different from the standard degenerate mobility case, in which the mobility degenerates in both the phases $u=0$ \textit{and} $u=1$.

Our main goal is to prove the existence of global weak solutions for \eqref{eq:CH} under the stated assumptions and to analyze the sharp interface limit as \(\eps \to 0\). In particular, we show that, assuming an appropriate energy convergence assumption, a weak solution of the anisotropic Cahn--Hilliard equation converges to a solution of the anisotropic weighted Hele--Shaw flow. We also add some numerical experiments to study the effects of anisotropy in the Cahn-Hilliard equation.

System \eqref{eq:CH} comes with two functionals, useful to obtain suitable estimates. First of all, the system can be viewed as the Wasserstein gradient flow of the energy functional
\begin{equation}
\label{eq:energy}
E[u] :=\int_{\Omega}\eps \,A (x,\nabla u(x) ) + \f{1}{\eps}F (x,u(x) )\diff x.
\end{equation}
In particular the solutions of~\eqref{eq:CH} decrease along the energy. 
A key auxiliary functional we also use is the so-called \emph{entropy}:
\begin{equation}
\label{eq:entropy}
\Phi(u):= \int_{\Omega} u(x)\log u(x)\diff x.
\end{equation}

Energy and entropy formally satisfy the relations

\begin{align}
&\label{energyest}\f{dE}{dt} + \int_{\Omega}u\,|\nabla \mu|^2\diff x=0,\\
&\f{d\Phi}{dt}+ \int_{\Omega}(\varepsilon\Div(A_{p}(x,\nabla u))- \frac1\eps F_{u}(x,u))\,\Delta u\diff x =0. 
\end{align}

The previous identities are crucial to find \textit{a priori} estimates and prove existence of weak solutions and the sharp interface limit, as these two results are based on compactness arguments.

Concerning the sharp interface limit as $\eps\to0$, given a solution $(u_\eps,j_\eps)$ satisfying for any $   \eps>0$ equations \eqref{eq:CH} in suitable weak form, the main ingredient is the energy estimate \eqref{energyest}, which allows to control the main quantity through the well-known Modica–Mortola trick \cite{Modica}, namely, defining  
\begin{equation}
\label{eq:definition_psi1}
    \psi(s) = 2 \int_{0}^{s} \sqrt{W(r)} \,\diff r
\end{equation}
we can control in $L^1$ the gradient of $\psi\circ u_\eps$ by means of the energy inequality \eqref{energyest}. This allows to obtain the desired compactness for passing to the limit.

Another essential ingredient is the observation that from the second identity in \eqref{eq:CHj}, after some simple computations, we have 
\begin{align}
 \nonumber j_\eps&= -\text{div}\ \mathbf T_\eps+\frac1\eps F_x(x,u_\eps)+\eps A_x(x,\nabla u_\eps)\\&\quad +\nabla\left(\eps\nabla u_\eps \cdot A_{p}(x,\nabla u_\eps)+\f{1}{\eps}u_\eps F_{u}(x,u_\eps)-\text{div}(\eps u_\eps A_p(x,\nabla u_\eps))\right),\label{flux1}
\end{align}
where $\mathbf T_\eps$ denotes the anisotropic energy stress tensor
\begin{align}
    \mathbf T_\eps:= \left(\eps A(x,\nabla u_\eps)+\frac 1 {\eps}F(x,u_\eps)\right)Id-\eps\left(\nabla u_\eps \otimes  A_p(x,\nabla u_\eps)\right).\label{stresstensor}
\end{align}
Observe that the right-hand side of \eqref{flux1}, excluding the terms with $F_x$ and $A_x$ (which indeed are less standard and need to be treated more carefully), is  in divergence form. This allows to test by smooth test functions and integrate by parts. The resulting weak
formulation has then good compactness properties, so that given a sequence of weak solutions, one only has to guarantee energy convergence to prove that the limit is a weak solution. Indeed, one only needs to pass to the limit in first-order terms.

In conclusion, to pass to the limit as $\eps\to0$ in the weak formulation of \eqref{flux1} we will exploit two different methods, i.e., by means of a suitable version of Reshetnyak continuity theorem (similar to \cite{CicaleseNagasePisante}) and by means of an anisotropic tilt excess approach (in the spirit of \cite{laux_kroemer,LauxUllrich,laux2018convergence}).

We now review in the section below the existing literature on the Cahn--Hilliard equation and the study of its sharp interface limit.

\subsection{Literature review}

To get a better understanding of the results presented here, we begin by revisiting the well-known isotropic Cahn-Hilliard equation, which can be seen as a particular case of our study when 
\[
A(x,\nabla u) = \frac{1}{2} \lvert \nabla u \rvert^2 
\quad \text{and} \quad 
F(x,u) = u^2(1 - u)^2.
\]
This isotropic model has been extensively studied, providing a solid basis of techniques that we adapt to the anisotropic setting. We first summarize results on the well-posedness of the isotropic Cahn-Hilliard equation and then revisit the literature on its sharp interface limit. Although the anisotropic version of the Cahn-Hilliard equation is relatively new, it has been rigorously investigated in some specific cases. In particular, the framework on the anisotropy that we use here is based on previous research. Finally, we mention some results on the anisotropic Allen-Cahn equation, which shares the same energy functional but is the gradient flow with respect to a different metric (\(L^2\) distance instead of the Wasserstein distance adopted in our setting).

\subsubsection{Isotropic Cahn-Hilliard equation: well posedness}

The isotropic Cahn--Hilliard equation is written in its simplest form as
$$
\partial_t u - \Div (m(u)\,\nabla\mu ) = 0, 
\quad 
\mu = -\eps\,\Delta u + \frac1\eps F'(u).
$$

This equation has now been extensively studied and a series of works has proven well-posedness results under various assumptions on \(m(u)\) and \(F(u)\).

When the mobility is constant, i.e., \(m(u) \equiv 1\), the equation simplifies considerably. In that case, the classical proofs often rely on a Galerkin scheme for proving existence. Typically, one constructs approximate solutions by projecting the PDE onto finite-dimensional subspaces, and then use compactness arguments to pass to the limit. Under these non-degenerate conditions, uniqueness is usually proven via a Gronwall-type argument.

A more delicate situation is when the mobility function can vanish for some values of \(u\). The equation is referred to as the \emph{Cahn--Hilliard equation with degenerate mobility} when the mobility degenerates in both the pure phases $u=0,1$, and \emph{Cahn--Hilliard equation with disparate mobility} when the mobility degenerates only in one pure phase, say $u=0$. The pioneering work of Elliott and Garcke~\cite{elliott_cahn-hilliard_1996} proved the existence of weak solutions in this degenerate setting, though the uniqueness problem remains largely open.

Regarding the choice of potential \(F\), which usually does not depend on space, two general categories are widely studied: smooth double-well potentials where \(F\) is a polynomial-function with two minima~\cite{mr3448925}, singular double-well potentials which might involve logarithmic terms, or other singularities to ensure strict phase separation~\cite{elliott_cahn-hilliard_1996}. We refer to \cite{MZ,GPoiatti} for some updated results about the instantaneous strict separation from pure phases in the case of constant mobility and singular potential. In conclusion, note that, the two-phase (isotropic or anisotropic) Cahn-Hilliard equation has been also extended to the case of multi-component mixtures (see, e.g., \cite{EL,GGPS, G1,G2}).

\vspace{6pt}





\vspace{6pt}

\noindent
\textbf{Gradient Flow Approaches and JKO Scheme.}
Another perspective to prove well-posedness of the Cahn-Hilliard equation is to view it as a gradient flow of an energy functional. In the case of the disparate mobility $m(u)=u$, this is a gradient flow in the Wasserstein metric $W_2$. In particular, one uses the well-known Jordan--Kinderlehrer--Otto (JKO) scheme to construct a time-discrete approximation and then pass to the limit where the discretization parameter is sent to 0. For general concave mobilities $m(u)$, adapting the JKO scheme requires considering modified Wasserstein distances introduced in~\cite{mr2448650}. The JKO approach has been followed in many works including~\cite{lisini-ch-gradient-flow,doi:10.1142/S021919972450041X,antonio2024competing,laux_kroemer}.

\subsubsection{Isotropic Cahn-Hilliard equation: sharp interface limit}

Consider the minimization problem 
$$
\min\left\{E_{\eps}[u]: u\ge 0, \, \int_{\Omega} u=m\right\},
$$
where $E_{\eps}[u]$ is the energy defined in~\eqref{eq:energy}. In the isotropic case, if we let $u_{\eps}$ be a minimizer of the previous problem, and $\lambda_{\eps}$ be the Lagrange multiplier associated to the mass constraint we obtain:
$$
\lambda_{\eps} = -\eps\Delta u_{\eps} +\f{1}{\eps} F'(u_{\eps}).
$$

$\lambda_{\eps}$ is often referred as the chemical potential. In this setting, it was conjectured by Gurtin in~\cite{gurtin1987some} and then proven by Luckhaus and Modica in~\cite{Luckhaus1989TheGR} that $\lambda_{\eps}\to \lambda$, where $\lambda$ is related to the constant sum of principal curvatures of the interface. This result corresponds to the Gibbs-Thomson relation for surface tension. Incorporating anisotropy and spatial heterogeneity in the gradient term, Cicalese et. al.~\cite{CicaleseNagasePisante} and Garcke, Kraus~\cite{garcke2009anisotropic} showed independently that a similar result holds, yielding an anisotropic Gibbs-Thomson relation. We also refer to~\cite{garcke2011existence} for the Stefan problem.

These results are at the basis of the sharp interface limit we aim to study. Indeed, in the Cahn-Hilliard equation the chemical potential is defined as 
$$
\mu_{\eps} = - \eps \Div(A_{p}(x,\nabla u_{\eps})) + \f{1}{\eps}F_{u}(x,u_{\eps})= -\eps \Delta u_{\eps} + \f{1}{\eps}F'(u_{\eps}) \text{ (in the isotropic case)}.
$$

In this case $\mu_{\eps}$ is not a Lagrange multiplier anymore and therefore it is not a constant, but rather a function. It is the first variation of the the energy functional $E_{\eps}[u_{\eps}]$. However many ideas already applied in~\cite{Luckhaus1989TheGR} and~\cite{CicaleseNagasePisante} still apply here, as shown in \cite{laux2018convergence}.

Concerning the sharp interface limit of the Cahn-Hilliard equation, a classical starting point is the work of Alikakos~\cite{alikakos1994convergence}, where the author considered the Cahn--Hilliard equation with constant mobility and proved rigorously the convergence result $\eps\to 0$ toward classical solutions of the Mullins-Sekerka flow. Later, Chen~\cite{chen1996global} proved the result using a varifold framework.

\bigskip

The idea behind the varifold setting is quite natural once one notices that, in some cases, there may be hidden interfaces \emph{inside} a region where the limiting solution is constant. Specifically, one might see an interface with a positive contribution to the total energy, even though the phase-field variable remains at (say) $1$ on both sides. These hidden or \emph{phantom interfaces} are not captured by a strict characteristic-function, or $BV$, description. This suggests that $BV$ solutions may not be the best framework to understand all the scenarios taking place as $\eps \to 0$. Hence, one is naturally led to refined solution concepts, such as those using varifolds or suitably weaker formulations of the moving interface. 

\bigskip

On the other hand, if one is willing to impose additional conditions ruling out phantom interfaces, and in general any loss of interface measure, one may work under an energy convergence assumption (see, for example, the energy condition in~\eqref{energ} that we impose) to obtain stronger notions of weak solutions. This assumption is quite classical and has been used in a different series of works~\cite{luckhaus1995implicit,hensel2021bv,laux2018convergence,kim2024density, LauxUllrich,laux_kroemer}. In the work~\cite{laux_kroemer}, for instance, such an assumption allows to recover a stronger weak solution (BV solution) description for the (isotropic) Hele--Shaw flow in the sharp interface limit. We follow a similar line of reasoning here.

\bigskip

A natural question is if it is possible to drop the assumption on the energy convergence and still obtain a limit in the varifold sense. This is left as an  open problem: in the isotropic Cahn--Hilliard setting, the usual strategy is to show that the so-called \emph{discrepancy measure}
$$
\xi^{\eps}(u_{\eps}) = \frac{\eps}{2}\, \lvert \nabla u_{\eps} \rvert^{2} - \frac{1}{\eps}\,F (u_{\eps} ),
$$
is non-positive, as first introduced in \cite{chen1996global}. However, in the anisotropic framework, one might consider
$$
\xi^{\eps}(u_{\eps}) = \eps \,A (x,\nabla u_{\eps} ) - \frac{1}{\eps}\,F (x,u_{\eps} ),
$$
where $A$ is a (possibly non-constant) matrix describing the interfacial energy. Even in the simpler situation where $A$ is constant, it seems complicated to show that this discrepancy measure remains non-positive, preventing us from proving the convergence in a purely varifold setting free of extra hypotheses. We also refer to~\cite{lee2016sharp,antonopoulou2018sharp,lam2019sharp} for works on the study of the sharp interface limit of the isotropic Cahn-Hilliard equation. Finally, we mention that another link between Cahn-Hilliard and free boundary models can also be obtained via incompressible limits (see, e.g., \cite{EPPS,crmeca}).

\subsubsection{The anisotropic Cahn-Hilliard and Allen-Cahn equations}

The anisotropic Cahn-Hilliard equation has been used to study snow crystal growth, solidification of metals and self organization of nanostructures in ~\cite{barrett2013stable,barrett2014stable,dziwnik2016existence,dziwnik2017anisotropic,V1}.
The well-posedness has been tackled in different papers. We refer to~\cite{garcke2023anisotropic} for the well-posedness and further properties in the setting of constant mobility. In the degenerate mobility setting, and for a particular anisotropic energy, we refer to~\cite{dziwnik2016existence}. Finally, we mention ~\cite{Patrik}, concerning the existence of weak solutions with degenerate mobility in the setting of, and with a similar approach to, Elliott and Garcke~\cite{elliott_cahn-hilliard_1996}. 

We also mention the anisotropic Allen-Cahn equation, which is the $L^2$ gradient flow of the anistropic Ginzburg Landau energy~\eqref{eq:energy}. This equation has been studied in~\cite{LAUT:2020} in the anisotropic setting.

In conclusion, concerning the spatial dependence of the potential $F$, here we assume that $F$ factorizes in the product of two terms $K(x)W(u)$, the first factor depending on $x$ and the second one depending on the phase-field $u$.  This approach has been adopted in many works, especially concerning the sharp interface limit of some variants of Allen-Cahn equation (see, for instance, \cite{Matano, Qi,alfaromotion}). More general inhomogeneous potentials have been taken into consideration in \cite{Bouchitte_Gamma}. The technique introduced in the former has been extended to the sharp interface limit of an Allen-Cahn equation with spatial dependent double-well potential, under suitable assumptions, in~\cite{ganedi2024convergence}.  Here the authors prove that, under the usual energy convergence assumption, the model converges to a weighted mean curvature flow in the sharp interface limit. 
We observe that so far we are not aware of any result in the literature concerning the sharp interface limit of the Cahn-Hilliard equation with degenerate (or disparate) mobility and spatial dependent double well potential. Our result is thus also new in this direction.

\subsection*{Contents of the paper}
In the next section, we detail our notations and the functional settings. We then introduce the Finsler metric framework, which is commonly used to study anisotropic problems, and provide technical results related to \(BV\) functions. Following this, we present our main theorems on the existence of weak solutions and the convergence to a weighted anisotropic Hele-Shaw flow as \(\eps \to 0\). 
Section~\ref{sect:existence} is dedicated to the existence of weak solutions, that is the proof of Theorem~\ref{thm:weak_sol_degenerate}. The proof is achieved with the JKO scheme. In the first subsection we recall some preliminaries and useful tools in optimal transport and JKO scheme and the second subsection is dedicated to the proof. Section~\ref{sect:sharp_interface} is dedicated to the sharp interface limit under a standard energy convergence assumption, that is the proof of Theorem~\ref{thm:sharp_interface}. We prove this theorem using two different approaches, by means of a suitable version of Reshetnyak continuity theorem and by means of an anisotropic tilt excess approach. In conclusion, in Appendix \ref{Appendix} we prove a lemma related to the anisotropic version of Reshetnyak continuity theorem, whereas Appendix \ref{numerical} is dedicated to some numerical simulation.

\section{Preliminaries and main results}

\label{prelr}

\subsection{Notation and functional settings}
We assume, for the sake of simplicity, that $\Omega$ is the \(d\)-dimensional torus $\mathbb T^d$ (or, more generally, any domain where periodic boundary conditions are imposed). We denote by $\mathcal L^d$ the $d$-dimensional Lebesgue measure. On the other hand $\mathcal H^m$ is the $m$-dimensional Hausdorff measure. We then denote by $C^k(\om)$, $k\in\N\cup\{\infty\}$, the space of continuous functions in $\om$ which are $k$-times continuously differentiable, as well as by $C^k_c(\om)$ the space of continuous functions with compact support in $\om$ which are $k$-times continuously differentiable. The Sobolev spaces are denoted as usual by $W^{k,p}(\Omega )$%
	, where $k\in \mathbb{N}$ and $1\leq p\leq \infty $, with norm $\Vert \cdot
	\Vert _{W^{k,p}}$. The Hilbert space $W^{k,2}(\Omega )$ is indicated by $H^{k}(\Omega )$ with norm $\Vert \cdot \Vert _{H^{k}}$. Note that we additionally denote, for $k=0$, $\norm{\cdot}=\norm{\cdot}_{L^2}$. Also, for fractional Sobolev spaces we write $H^s(\om)$ for any $s\in(0,1)$, to indicate the fractional space $W^{s,2}(\Omega)$.
Let now $X$ be a Banach space. We denote by $L^q (a,b; X )$, $0\leq a\leq b$, $q\in[1,\infty]$, the Bochner space of $X$-valued $q$-integrable (or essentially bounded functions). Moreover, given a generic interval $J$, the function space 
$C_c^\infty (J ; X)$ denotes the vector space of all $C^\infty$-functions $f : J\to X$ with compact support in $J$.  
\subsection{Finsler metric}

\label{phi}
 We define a map
$$
    \phi  :  \Omega \times \mathbb{R}^d  \to  [0,+\infty),
$$
    which is strictly convex, in the sense that, for any $x \in \Omega$, the map $\phi^2(x,\cdot)$ is strictly
convex on $\R^d$. Also assume that $\phi$ is continuous and satisfies the following properties:
\begin{align}
    &\phi (x, t\,\xi ) 
      = \lvert t\rvert\,\phi (x, \xi )
      \quad 
      \text{for all }x \in \Omega,  \xi \in \mathbb{R}^d,  t \in \mathbb{R},
      \label{prop_Finsler1}
    \\
    &\lambda\,\lvert \xi\rvert 
      \le \phi (x,\xi ) 
      \le \Lambda\,\lvert \xi\rvert
      \quad
      \text{for some }0<\lambda\le\Lambda<\infty.
      \label{prop_Finsler2}
\end{align}
These two conditions express the positive $1$-homogeneity of $\phi$ in the second argument and give uniform bounds, so that $\phi(\cdot,\cdot)$ defines a Finsler norm (with $x$-dependence).

Let $B_{\phi(x)}$ be the (convex) unit ball associated to $\phi$, at a point $x\in\Omega$, that is
$$
B_{\phi(x)} = \{\xi\in \R^d, \, \phi(x,\xi)\le 1\}. 
$$

The dual Finsler function $\phi^{\circ}\colon \Omega\times \mathbb{R}^d\to [0,+\infty)$ is defined by
$$
    \phi^{\circ} (x,\xi^{\ast} )
     =
    \sup 
     \{\,\xi^{\ast}\cdot \xi 
    \,\colon\, \xi\in B_{\phi(x)} \},
$$
and we let $B_{\phi^\circ(x)}$ be the convex unit ball associated to $\phi^\circ$, at a point $x\in\Omega$, that is
$$
B_{\phi^\circ(x)} = \{\xi^*\in \R^d, \, \phi^\circ(x,\xi^*)\le 1\}. 
$$

One may verify that $\phi^{\circ}$ also satisfies properties \eqref{prop_Finsler1}--\eqref{prop_Finsler2}, and $\phi^{\circ\circ}$ in general equals the convex envelope of $\phi$. For a vector $\nu\in S^{d-1}$, the $\phi$-vector $\nu_{\phi}(x)$ and $n_{\phi}(x)$ are defined as 

\begin{align}
\nu_{\phi}(x) = \f{\nu}{\phi^{\circ}(x,\nu)}, \quad n_{\phi}(x) = \phi^{\circ}_{\xi}(x,\nu_{\phi}),  
\label{normals}
\end{align}
where $\phi^{\circ}_{\xi}$ denotes the partial derivative of $\phi^{\circ}$ with respect to the vector $\xi$. Formally, $\nu_{\phi}(x)$ rescales the Euclidean unit normal so that it becomes a Finsler unit normal, while $n_{\phi}(x)$ which is known as Cahn–Hoffmann vector, can be seen as the corresponding dual vector in the co-tangent space.
Observe that the following elementary properties can be proven (see, e.g., \cite[Section 2.1]{Finsler_geometry}):

\begin{lemma}[Elementary Properties of \(\phi\) and \(\phi^\circ\)]
\label{elementary}
For each $x\in \Omega$ and for all $\xi,\xi^*\in \mathbb{R}^d\setminus\{0\}$, it holds:
\begin{enumerate}
\item[\emph{(i)}] 
\(\displaystyle
    \phi^{\circ}_\xi (x,t\xi^* )
    = \frac{t}{\lvert t\rvert}\,\phi^{\circ}_\xi (x,\xi^* ),
    \quad
    \phi^\circ_{\xi\xi} (x,t\xi^* )
    = \frac{1}{\lvert t\rvert}\,\phi^\circ_{\xi\xi} (x,\xi^* ),
\)
for all $t\neq 0$.

\item[\emph{(ii)}] 
\(\displaystyle
    \phi (x,\xi )
     = 
    \phi_\xi (x,\xi )\,\cdot\,\xi,
    \quad
    \phi^\circ (x,\xi^* )
     = 
    \phi^\circ_\xi (x,\xi^* )\,\cdot\,\xi^*.
\)

\item[\emph{(iii)}] 
\(\displaystyle
    \phi (x,\phi^\circ_\xi(x,\xi^*) ) 
     = 
    \phi^\circ (x,\phi_\xi(x,\xi) )
     = 
    1.
\)
\end{enumerate}

\noindent
In particular,
$$
    \nu_\phi(x)\cdot n_\phi(x)
     = 
    1,
$$
which follows immediately upon noting that
\(\nu_\phi(x)\in \partial B_{\phi^\circ(x)}\) is precisely the dual direction to $n_\phi(x)\in \partial B_{\phi(x)} $.
\end{lemma}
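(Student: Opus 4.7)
The plan is to derive each statement from the positive $1$-homogeneity of $\phi(x,\cdot)$ and $\phi^\circ(x,\cdot)$ in the second variable, combined with the bipolar identity $\phi^{\circ\circ}=\phi$ and elementary convex analysis of support functions. I would treat (ii) first, then obtain (i) by differentiating the homogeneity relation once or twice in $\xi^*$, and finally establish (iii) via the support function characterization of $\phi^\circ$. The concluding identity $\nu_\phi\cdot n_\phi=1$ would then drop out as a direct corollary.

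For (ii), I would apply Euler's identity for positively $1$-homogeneous functions: differentiating $\phi(x,t\xi)=t\,\phi(x,\xi)$ in $t$ at $t=1$ yields $\phi_\xi(x,\xi)\cdot\xi=\phi(x,\xi)$, and the same calculation applied to $\phi^\circ$ gives the second formula. For (i), I would differentiate the homogeneity relation $\phi^\circ(x,t\xi^*)=|t|\,\phi^\circ(x,\xi^*)$ in $\xi^*$: the chain rule produces $t\,\phi^\circ_\xi(x,t\xi^*)=|t|\,\phi^\circ_\xi(x,\xi^*)$, which gives the first identity after division by $t$. A second differentiation in $\xi^*$ yields $t\,\phi^\circ_{\xi\xi}(x,t\xi^*)=\tfrac{t}{|t|}\phi^\circ_{\xi\xi}(x,\xi^*)$, from which the second identity follows. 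Both computations are valid on $\R^d\setminus\{0\}$, where $\phi^\circ(x,\cdot)$ admits the required derivatives by the standard regularity of strictly convex dual norms.

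For (iii), I would use that $\phi^\circ(x,\cdot)$ is precisely the support function of the convex body $B_\phi(x)$. The key observation is that strict convexity of $\phi^2(x,\cdot)$ forces $B_\phi(x)$ to be strictly convex, so for every $\xi^*\ne 0$ the supremum $\sup_{\xi\in B_\phi(x)}\xi^*\cdot\xi$ is attained at a unique point $\xi_0\in\partial B_\phi(x)$. The classical fact that the support function of a strictly convex body is $C^1$ off the origin, with gradient equal to the unique maximizer, then gives $\phi^\circ_\xi(x,\xi^*)=\xi_0$, so that $\phi(x,\phi^\circ_\xi(x,\xi^*))=1$. The symmetric identity $\phi^\circ(x,\phi_\xi(x,\xi))=1$ follows from the same argument with $\phi$ and $\phi^\circ$ exchanged, invoking the bipolar identity $\phi^{\circ\circ}=\phi$. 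I expect this to be the main conceptual step where most of the effort goes; everything else is mechanical.

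Finally, the concluding identity follows in one line: applying (ii) to $\phi^\circ$ at $\xi^*=\nu_\phi(x)$ gives $\nu_\phi(x)\cdot\phi^\circ_\xi(x,\nu_\phi(x))=\phi^\circ(x,\nu_\phi(x))$, and the $1$-homogeneity of $\phi^\circ$ combined with the definition $\nu_\phi(x)=\nu/\phi^\circ(x,\nu)$ yields $\phi^\circ(x,\nu_\phi(x))=1$. Recognising the left-hand side as $\nu_\phi(x)\cdot n_\phi(x)$ closes the proof.
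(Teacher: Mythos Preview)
Your proposal is correct and follows the standard route for these identities. The paper itself does not give a proof of this lemma: it only states the result and refers the reader to \cite[Section 2.1]{Finsler_geometry}. Your argument via Euler's identity for (ii), differentiation of the homogeneity relation for (i), and the support-function characterization of the polar norm for (iii) is exactly the expected derivation, and the deduction of $\nu_\phi\cdot n_\phi=1$ from (ii) and the normalization $\phi^\circ(x,\nu_\phi)=1$ is the natural one-line conclusion.
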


Consider now $E\subset \R^d$ with $C^2$ boundary, and define, for any $x\in \partial E$, $\nu(x)$ as the unit inner normal to $\partial E$ at $x$. For a given $C^1$ vector field $X:\partial E\to \R^d$ we define the $\phi-tangential$ divergence of $X$ on $\partial E$ (see \cite{CicaleseNagasePisante}) as    \begin{align*}
    \text{div}_\phi X= \text{tr}\left[(Id-n_\phi\otimes \nu_\phi)\nabla \tilde{X}+\phi^\circ_x(x,\nu_\phi)\otimes \tilde{X}\right],
\end{align*} 
where $\tilde{X}$ is a smooth extension of X to a neighborhood of $\partial E$. 
Extending also to a neighborhood of $\partial E$ the vector fields $\nu_\phi$ and $n_\phi$ by regular fields
without relabeling, we can introduce the $\phi$-mean curvature
$H_\phi$ of $\partial E$ as
\begin{align*}
    H_\phi=-\text{div}_\phi n_\phi.
\end{align*}
In conclusion, as in \cite{CicaleseNagasePisante}, we report here a theorem which will be useful in showing that the weak formulation of the Hele-Shaw flow obtained in the sharp interface limit is coherent with the strong one, as long as we assume a smooth solution. For a proof we refer to  (2.3) and (3.2) in \cite{FragalaBellettini}.
\begin{thm}
\label{basic1}
   Let $E\subset \R^d$ be an open set with $C^2$ boundary. Let $U\subset 
 \R^d$ be a neighborhood of $\partial E$ and $g\in C_c^1(U;\R^d)$. Then
 \begin{align*}
     \int_{\partial E}H_\phi \nu_\phi \cdot g\, \phi^{\circ}(x,\nu)\diff\mathcal H^{d-1}=-\int_{\partial E}\Div_\phi g\,\phi^{\circ}(x,\nu)\diff\mathcal{H}^{d-1}.
 \end{align*}
\end{thm}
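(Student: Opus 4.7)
The plan is to read both sides of the claim as equivalent expressions for the first variation of the anisotropic perimeter
\begin{equation*}
P_\phi(F) := \int_{\partial F}\phi^{\circ}(x,\nu_F)\,\diff\mathcal H^{d-1}
\end{equation*}
along the flow generated by $g$. Let $\Phi_t$ be the $C^1$ flow on $\R^d$ with $\Phi_0=\mathrm{id}$ and $\partial_t\Phi_t|_{t=0}=g$; since $\mathrm{supp}\,g\Subset U$, $\Phi_t$ is a $C^1$-diffeomorphism for small $|t|$, and $E_t:=\Phi_t(E)$ still has a $C^2$ boundary. The change of variables $y=\Phi_t(x)$ yields
\begin{equation*}
P_\phi(E_t)=\int_{\partial E}\phi^{\circ}\bigl(\Phi_t(x),\nu_{E_t}(\Phi_t(x))\bigr)\,J^{\tau}_{\Phi_t}(x)\,\diff\mathcal H^{d-1}(x),
\end{equation*}
where $J^{\tau}_{\Phi_t}$ denotes the tangential Jacobian of $\Phi_t$ along $\partial E$.

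For the first representation I would differentiate at $t=0$ using the standard infinitesimal identities
\begin{equation*}
\tfrac{d}{dt}\Big|_{t=0}J^{\tau}_{\Phi_t}=\mathrm{div}_{\partial E}g,\qquad \tfrac{d}{dt}\Big|_{t=0}(\nu_{E_t}\circ\Phi_t)=-(\nabla g)^{T}\nu+\bigl(\nu\cdot(\nabla g)^{T}\nu\bigr)\nu,
\end{equation*}
combined with the homogeneity identities of Lemma~\ref{elementary}. In particular $\phi^{\circ}_{\xi}(x,\nu)=\phi^{\circ}_{\xi}(x,\nu_\phi)=n_\phi$ (by $0$-homogeneity of $\phi^{\circ}_{\xi}$ in $\xi$), and $\phi^{\circ}_{x}(x,\nu)=\phi^{\circ}(x,\nu)\,\phi^{\circ}_{x}(x,\nu_\phi)$ (by differentiating in $x$ the positive $1$-homogeneity of $\phi^{\circ}$ in $\xi$). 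After collecting the three contributions, the projection $Id-n_\phi\otimes\nu_\phi$ exactly absorbs the pure-$\nu$ terms coming from the Jacobian and from the normal variation, giving
\begin{equation*}
\tfrac{d}{dt}\Big|_{t=0}P_\phi(E_t)=\int_{\partial E}\mathrm{tr}\bigl[(Id-n_\phi\otimes\nu_\phi)\nabla g+\phi^{\circ}_{x}(x,\nu_\phi)\otimes g\bigr]\phi^{\circ}(x,\nu)\,\diff\mathcal H^{d-1}=\int_{\partial E}\mathrm{div}_\phi g\,\phi^{\circ}(x,\nu)\,\diff\mathcal H^{d-1}.
\end{equation*}

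For the second representation I would decompose $g=\alpha n_\phi+g_T$ with $\alpha:=g\cdot\nu_\phi$ and $g_T\cdot\nu_\phi=0$, which is consistent since $n_\phi\cdot\nu_\phi=1$ by Lemma~\ref{elementary}(iii). At first order the flow of the $\phi$-tangential piece $g_T$ reparametrises $\partial E$ without changing $P_\phi$, so it contributes zero to the variation; the remaining normal flow $\alpha n_\phi$ gives, via the definition $H_\phi=-\mathrm{div}_\phi n_\phi$ together with a product-rule manipulation of $\mathrm{div}_\phi(\alpha n_\phi)$,
\begin{equation*}
\tfrac{d}{dt}\Big|_{t=0}P_\phi(E_t)=-\int_{\partial E}\alpha H_\phi\,\phi^{\circ}(x,\nu)\,\diff\mathcal H^{d-1}=-\int_{\partial E}(H_\phi\nu_\phi)\cdot g\,\phi^{\circ}(x,\nu)\,\diff\mathcal H^{d-1}.
\end{equation*}
Equating the two representations yields the stated identity. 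I expect the main obstacle to be showing that the $\phi$-tangential flow of $g_T$ truly contributes nothing at first order: this reduces to verifying that $\mathrm{div}_\phi g_T\,\phi^{\circ}(x,\nu)$ can be written as a pure Euclidean tangential divergence on the closed surface $\partial E$ when $g_T\cdot\nu_\phi=0$, an algebraic check that relies crucially on the $1$-homogeneity of $\phi^{\circ}$ in $\xi$ and on the duality $n_\phi\cdot\nu_\phi=1$.
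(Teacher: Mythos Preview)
The paper does not give its own proof of this theorem; immediately after the statement it writes ``For a proof we refer to (2.3) and (3.2) in \cite{FragalaBellettini}.'' Your first-variation approach is the standard route and is essentially what underlies those references, so your proposal is sound.

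Two minor clarifications on your argument. First, since $\nu_\phi=\nu/\phi^{\circ}(x,\nu)$ is parallel to the Euclidean normal $\nu$, the condition $g_T\cdot\nu_\phi=0$ is exactly $g_T\cdot\nu=0$; hence $g_T$ is Euclidean-tangent to $\partial E$, its flow fixes $\partial E$ setwise, and $P_\phi(E_t)=P_\phi(E)$ identically, so the ``algebraic check'' you flag as the main obstacle is in fact unnecessary. Second, the product-rule step is cleaner than you suggest: from $(n_\phi\otimes\nu_\phi)(n_\phi\otimes\nabla\alpha)=(\nu_\phi\cdot n_\phi)(n_\phi\otimes\nabla\alpha)=n_\phi\otimes\nabla\alpha$ one gets $\mathrm{div}_\phi(\alpha n_\phi)=\alpha\,\mathrm{div}_\phi n_\phi=-\alpha H_\phi$ exactly, with no residual term to integrate away.
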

\subsection{BV functions and anisotropic perimeters}
We recall here the basic definitions of BV functions and the notion of anisotropic
perimeter. Given a vector-valued measure $\mu$ on $\Omega$, we denote by $\vert\mu\vert$ its total variation and
we adopt the notation $\mathcal M(\Omega)$ ($\mathcal M^+(\om)$, respectively) for the set of all signed (positive, respectively) Radon measures on $\Omega$ with bounded
total variation. The Lebesgue measure of a set $E$ is indicated by $\vert E\vert$. 
Recall that $u \in L^1(\Omega)$ belongs to the space $BV (\Omega)$ of functions of bounded
variation if its distributional derivative $Du$ belongs to $\mathcal M(\Omega)$, where we denote by $Du$ the
$\mathbb R
^d$-valued measure whose components are $D_1u,\ldots ,D_d u$. A set $E$ is of finite perimeter in $\Omega$ if its characteristic function $\chi_E \in
BV (\Omega)$ and we denote by $\mathcal{P}_\om (E) = \vert D \chi_E\vert(\Omega)$ the perimeter of E in $\Omega$. The family
of sets of finite perimeter can be identified with the functions $u \in BV (\Omega; \{0, 1\})$. It holds that
for $E = \{x\in \om : \, u(x) = 1\}$,
$$\mathcal P_\Omega(E) = \vert Du\vert (\Omega) = \mathcal H^{d-1}
(\partial^*E\cap  \Omega),$$
where $\partial^*E$ is the reduced boundary of $E$.
We now revise the definitions and some properties of the anisotropic total variation
for $BV$-functions and introduce the anisotropic perimeter. We start with the definition of the anisotropic $\phi$-total
variation (where $\phi$ is defined in Section \ref{phi}) of the $d$-dimensional measure $\mu\in\mathcal M^d(\Omega)$ as 
\begin{align*}
\abs{\mu}_{\phi}(\Omega):=\sup\left\{\int_\Omega 
 \sigma\cdot  \diff\mu ;\ \sigma\in C_c
(\Omega; \mathbb R^d ) , \, \sigma(x) \in B_{\phi(x)}\right\}.
\end{align*}
Let $u\in BV(\Omega)$. The anisotropic $\phi$-total variation of its (weak) gradient $Du$ is then
\begin{align}\label{variation}
\vert Du\vert_{\phi}(\Omega) = \sup\left\{\int_\Omega 
u\,  \Div \sigma\, \diff x ;\ \sigma\in C^
1_c
(\Omega; \mathbb R^d ) ,\,  \sigma(x) \in B_{\phi(x)}\right\}.
\end{align}
We point out here that it would be more intuitive to write $\vert \cdot\vert_{\phi^\circ}$, as the dual functions satisfy a constraint with respect to $\phi$. Nevertheless the notation adopted here is well established in the literature (see, e.g., \cite{CicaleseNagasePisante, 2CNP}).
Note that by the hypotheses on $\phi$, from Theorem 5.1 in \cite{2CNP} we have that
the $\phi$-total variation is $L^1
(\Omega)$-lower semicontinuous and admits the following integral
representation
$$\vert Du\vert_\phi(\Omega) = \int_\Omega \phi^{\circ}(x,\nu_u)d\vert Du\vert,\quad \forall u\in BV(\Omega),$$ where $\nu_{u}:=\frac{Du}{\vert Du\vert}$, is the Radon-Nikodym derivative of $Du$ with respect to its total variation.
Clearly, if $\phi^{\circ}(x,\xi) = |\xi|$ then the $\phi$-total variation $\vert Du\vert_\phi(\Omega)$ agrees with $\vert Du\vert(\Omega)$.
We now set the definition and some properties of anisotropic perimeters. Let $E \subset\mathbb R^d$ be a set of finite perimeter in $\Omega$. Given $K\in C(\overline{\Omega})$ such that $\inf_{x\in \Omega}K(x)\geq K_*>0$, we define the weighted $\phi$-anisotropic perimeter of $E$ in $\Omega$ as
\begin{align}\label{perimeter}
\mathcal{P}^K_\phi(E) = \int_{\partial^*E\cap \Omega}\sqrt{K(x)}\phi^{\circ}
(x,\nu(x)) d\mathcal H^{d-1},
\end{align}
where $\nu$ is the measure theoretic unit inner normal to $\partial^*E$.
Observe that it holds 
$\mathcal P^K_\phi(E) = \vert \sqrt{K(\cdot)}D\chi_E\vert_\phi(\Omega)=\sqrt{K(\cdot)} \vert D\chi_E\vert_\phi(\Omega)$.

To conclude this section, we present some useful lemmas concerning geometric measure theory. First, we observe that we can state the following result, which can be proven in the very same way as in \cite[Lemma 2.5]{LauxUllrich}, thanks to Lemma \ref{elementary}:
\begin{lemma}\label{sup}
Let  $G\in L^1
(\Omega)^d$, and let $\phi$ be as in Section \ref{phi}. Then
$$
\int_\Omega
\phi^{\circ}(x,G)\diff x = \sup_{\eta}\left\{
\int_\Omega G \cdot\eta\, \diff x\right\},$$
where the supremum is taken over all $\eta \in C
^1
(\Omega)^d$
such that $\eta(x)\in B_\phi(x)$ for any $x\in \Omega$. 
\end{lemma}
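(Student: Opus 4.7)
The plan is to prove the two inequalities separately. The direction $\sup_\eta \int_\Omega G\cdot \eta\,\diff x \le \int_\Omega \phi^\circ(x,G)\,\diff x$ is immediate: for any admissible $\eta$, the definition of $\phi^\circ$ yields the pointwise Fenchel inequality $G(x)\cdot \eta(x)\le \phi^\circ(x,G(x))\,\phi(x,\eta(x))\le \phi^\circ(x,G(x))$ a.e., since $\phi(x,\eta(x))\le 1$ by $\eta(x)\in B_\phi(x)$. Integration gives the bound.

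For the reverse inequality, the formal optimizer is $\eta^*(x):=\phi^\circ_\xi(x,G(x))$: Lemma~\ref{elementary}(iii) yields $\phi(x,\eta^*(x))=1$, so $\eta^*(x)\in B_\phi(x)$, and Lemma~\ref{elementary}(ii) yields $\eta^*(x)\cdot G(x)=\phi^\circ(x,G(x))$. Since $\eta^*$ is in general not even continuous when $G\in L^1(\Omega)^d$, a regularization is needed. I would pick a sequence $\{\tilde G_n\}\subset C^\infty(\Omega)^d$ with $\tilde G_n\to G$ in $L^1(\Omega)^d$, perturbed so that $\tilde G_n(x)\neq 0$ for every $x\in\Omega$ (e.g.\ by adding a constant vector $\delta_n e_1$ with $\delta_n\to 0$ after mollification), and set
$$
\eta_n(x):=\phi^\circ_\xi(x,\tilde G_n(x)).
$$
The smoothness of $\phi^\circ_\xi$ away from the origin (inherent in the Finsler structure underlying Lemma~\ref{elementary}) together with the nonvanishing of $\tilde G_n$ guarantee $\eta_n\in C^1(\Omega)^d$; Lemma~\ref{elementary}(iii) then ensures $\phi(x,\eta_n(x))=1$, so admissibility holds, while \eqref{prop_Finsler2} gives the uniform bound $|\eta_n(x)|\le 1/\lambda$.

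Writing, thanks to Lemma~\ref{elementary}(ii),
$$
\int_\Omega G\cdot \eta_n\,\diff x = \int_\Omega \phi^\circ(x,\tilde G_n)\,\diff x + \int_\Omega (G-\tilde G_n)\cdot \eta_n\,\diff x,
$$
the remainder term is controlled by $\lambda^{-1}\|G-\tilde G_n\|_{L^1}\to 0$. Moreover the dual bound $\phi^\circ(x,\xi^*)\le \lambda^{-1}|\xi^*|$ (obtained by duality from \eqref{prop_Finsler2}) combined with the subadditivity of $\phi^\circ(x,\cdot)$ gives $|\phi^\circ(x,\xi^*)-\phi^\circ(x,\zeta^*)|\le \lambda^{-1}|\xi^*-\zeta^*|$, so that the functional $F\mapsto\int_\Omega\phi^\circ(x,F)\,\diff x$ is Lipschitz on $L^1(\Omega)^d$ and the principal term converges to $\int_\Omega \phi^\circ(x,G)\,\diff x$. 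Combining gives $\int_\Omega G\cdot \eta_n\,\diff x\to\int_\Omega \phi^\circ(x,G)\,\diff x$, hence the reverse inequality. The main subtlety I expect is producing a $C^1$ field $\eta_n$ lying \emph{exactly} on $\partial B_\phi(x)$ while the unit ball $B_\phi(x)$ varies with $x$; the explicit formula $\eta_n=\phi^\circ_\xi(x,\tilde G_n)$ handles this automatically via the identity $\phi(x,\phi^\circ_\xi(x,\cdot))\equiv 1$ of Lemma~\ref{elementary}. This is the same scheme as in \cite[Lemma~2.5]{LauxUllrich}, with the $x$-dependence of $\phi$ absorbed into that of $\phi^\circ_\xi$.
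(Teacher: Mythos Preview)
Your approach is exactly the one the paper has in mind: it does not give its own proof but simply states that the lemma ``can be proven in the very same way as in \cite[Lemma~2.5]{LauxUllrich}, thanks to Lemma~\ref{elementary}'', and you have reproduced that scheme, using Lemma~\ref{elementary}(ii)--(iii) to identify the candidate optimizer $\phi^\circ_\xi(x,\tilde G_n(x))$ and to place it on $\partial B_\phi(x)$, then closing with the Lipschitz estimate on $\xi^*\mapsto\phi^\circ(x,\xi^*)$.

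One technical point deserves a cleaner treatment: the specific perturbation ``add $\delta_n e_1$'' does \emph{not} in general make a smooth approximation nowhere vanishing on $\mathbb{T}^d$. For instance, if $\tilde G_n(x)=(\sin 2\pi x_1,\dots,\sin 2\pi x_d)$ the image is $[-1,1]^d$, which contains a full neighbourhood of the origin, so $\tilde G_n+v$ vanishes somewhere for every small $v$; topological degree obstructions can make nowhere-vanishing $C^0$ approximation impossible. The standard remedy, which preserves your argument verbatim, is to cut off near the zeros: pick a smooth $\chi_\epsilon:[0,\infty)\to[0,1]$ with $\chi_\epsilon\equiv 0$ on $[0,\epsilon]$ and $\chi_\epsilon\equiv 1$ on $[2\epsilon,\infty)$, and set
\[
\eta_n(x):=\chi_{\epsilon_n}\bigl(|\tilde G_n(x)|\bigr)\,\phi^\circ_\xi\bigl(x,\tilde G_n(x)\bigr),
\]
with the convention $\eta_n=0$ where $\tilde G_n=0$. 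This is $C^1$ (the singular factor is killed by the cutoff), satisfies $\phi(x,\eta_n(x))=\chi_{\epsilon_n}(|\tilde G_n|)\le 1$, and your splitting gives
\[
\int_\Omega G\cdot\eta_n = \int_\Omega \chi_{\epsilon_n}(|\tilde G_n|)\,\phi^\circ(x,\tilde G_n)\,\diff x + \int_\Omega \chi_{\epsilon_n}(|\tilde G_n|)\,(G-\tilde G_n)\cdot\phi^\circ_\xi(x,\tilde G_n)\,\diff x.
\]
The second integral is handled exactly as you wrote, and in the first the defect $\int (1-\chi_{\epsilon_n})\phi^\circ(x,\tilde G_n)\le |\Omega|\,2\epsilon_n/\lambda\to 0$ since $\phi^\circ(x,\cdot)\le\lambda^{-1}|\cdot|$ on the support of $1-\chi_{\epsilon_n}(|\tilde G_n|)$. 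With this adjustment your proof is complete and coincides with the paper's intended argument.
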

Another technical lemma concerns an analogue of the weak*  convergence for BV functions depending on time. Following the notation in \cite[Definition 2.27]{Ambrosiofuscopallara}, given a function $v\in L^1(0,T;\mathcal{M}(\Omega))$, we set $\mathcal L^1_{|(0,T)}\otimes v(t)$ the measure such that
\begin{align*}
    \mathcal L^1_{|(0,T)}\otimes v(t)(B):=\int_0^T\left(\int_\Omega \chi_B \diff v(t)\right)\diff t,\quad \forall B\in (0,T)\times\Omega.
\end{align*}
\begin{lemma}
    Let $\{v_k\}_k\subset L^1(0,T;BV(\Omega))$ such that $v_k(t)\to v_0(t)$ in $L^1(\Omega)$ for almost any $t\in(0,T)$. If additionally there exists $C>0$ such that 
    \begin{align}
        \sup_k\vert Dv_k(t)\vert(\Omega)\leq C,\quad \text{for almost any }t\in(0,T),\label{sup}
    \end{align}
    then $v_0\in L^1(0,T;BV(\Omega))$ and it holds
    \begin{align}
    \mathcal L^1_{|(0,T)}\otimes Dv_k(t)\overset{*}{\rightharpoonup} \mathcal L^1_{|(0,T)}\otimes Dv_0(t),\quad \text{in }\mathcal M((0,T)\times\Omega)\quad \text{as }k\to\infty.
        \label{conv}
    \end{align}
    \label{technical}
\end{lemma}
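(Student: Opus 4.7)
The plan has two separate components: first, establishing that the candidate limit $v_0$ belongs to the space $L^1(0,T;BV(\Omega))$; and second, promoting the almost-everywhere (in time) weak$^{*}$ convergence of $Dv_k(t)$ to the global weak$^{*}$ convergence of the tensor-product measures.

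\emph{Step 1: Membership of $v_0$ in $L^1(0,T;BV(\Omega))$.} For almost every $t\in(0,T)$ we have $v_k(t)\to v_0(t)$ in $L^1(\Omega)$, and the assumption~\eqref{sup} gives $\sup_k|Dv_k(t)|(\Omega)\le C$. By the classical lower semicontinuity of the total variation with respect to $L^1$ convergence,
\begin{equation*}
|Dv_0(t)|(\Omega)\le\liminf_{k\to\infty}|Dv_k(t)|(\Omega)\le C\quad\text{for a.e. }t\in(0,T).
\end{equation*}
Combined with the $L^1$-convergence of $v_k(t)$ to $v_0(t)$ (which yields $v_0(t)\in L^1(\Omega)$ a.e., and in fact, since $\Omega=\mathbb{T}^d$, a uniform-in-$t$ control on $\|v_0(t)-\overline{v_0(t)}\|_{L^1}$ via Poincaré--Wirtinger), this gives $v_0\in L^1(0,T;BV(\Omega))$ as desired; the only part requiring some care is the integrability of the average, which is inherited from the $L^1_t$-in-time assumption on the sequence $v_k$.

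\emph{Step 2: Weak$^{*}$ convergence of the tensor-product measures.} By definition of weak$^{*}$ convergence in $\mathcal M((0,T)\times\Omega)$, it suffices to show that for every test function $\Phi\in C_c((0,T)\times\Omega;\R^d)$,
\begin{equation*}
\int_0^T\!\int_\Omega \Phi(t,x)\cdot \diff Dv_k(t)(x)\,\diff t\;\longrightarrow\;\int_0^T\!\int_\Omega \Phi(t,x)\cdot \diff Dv_0(t)(x)\,\diff t.
\end{equation*}
Fix such a $\Phi$. For almost every $t\in(0,T)$, the standard compactness theorem for $BV$ functions (\cfb, for instance, Ambrosio--Fusco--Pallara) guarantees that $v_k(t)\to v_0(t)$ in $L^1(\Omega)$ together with the bound $|Dv_k(t)|(\Omega)\le C$ implies the weak$^{*}$ convergence $Dv_k(t)\overset{*}{\rightharpoonup} Dv_0(t)$ in $\mathcal M(\Omega;\R^d)$. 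Applied to the continuous, compactly supported test $\Phi(t,\cdot)$, this yields
\begin{equation*}
f_k(t):=\int_\Omega \Phi(t,x)\cdot\diff Dv_k(t)(x)\;\longrightarrow\;\int_\Omega \Phi(t,x)\cdot\diff Dv_0(t)(x)=:f_0(t)\quad\text{for a.e. }t\in(0,T).
\end{equation*}

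\emph{Step 3: Passage to the limit in $t$.} The sequence $f_k$ is dominated uniformly:
\begin{equation*}
|f_k(t)|\le \|\Phi(t,\cdot)\|_{L^\infty(\Omega)}\,|Dv_k(t)|(\Omega)\le C\,\|\Phi\|_{L^\infty((0,T)\times\Omega)},
\end{equation*}
which is a bounded (hence integrable on the finite interval $(0,T)$) majorant. Lebesgue's dominated convergence theorem therefore yields $\int_0^T f_k\to\int_0^T f_0$, which is precisely the claimed weak$^{*}$ convergence~\eqref{conv}.

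\emph{Main obstacle.} The conceptual content of the argument is standard; the only delicate points are (a) checking the measurability in $t$ of the sliced integrals $f_k$ and $f_0$, which follows from the disintegration/tensor-product definition of $\mathcal L^1_{|(0,T)}\otimes Dv_k(t)$ and the fact that $v_k,v_0\in L^1(0,T;BV(\Omega))$; and (b) the justification, in Step 1, that $t\mapsto\|v_0(t)\|_{L^1(\Omega)}$ is integrable, where one uses the torus Poincaré--Wirtinger inequality together with the information coming from the $L^1(\Omega)$-a.e.-in-$t$ convergence $v_k(t)\to v_0(t)$. Neither of these steps requires new ideas, so the lemma follows essentially from slice-wise $BV$ compactness and dominated convergence.
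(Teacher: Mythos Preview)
Your proposal is correct and follows essentially the same approach as the paper: slice-wise weak$^{*}$ convergence $Dv_k(t)\overset{*}{\rightharpoonup} Dv_0(t)$ from $L^1(\Omega)$-convergence plus the uniform total-variation bound, then dominated convergence in $t$ using the majorant $C\|\Phi\|_{L^\infty}$. The paper's proof is slightly terser (it simply cites the relevant proposition in Ambrosio--Fusco--Pallara for the slice-wise step and does not discuss measurability or the $L^1_t$-integrability of $\|v_0(t)\|_{L^1(\Omega)}$), but the structure is identical.
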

\begin{proof}
    First, observe that, thanks to the $L^1(\Omega)$ convergence $t$-a.e. and \eqref{sup}, by \cite[Proposition 3.13]{Ambrosiofuscopallara} it holds $Dv_0(t)\in \mathcal{M}(\Omega)$ for almost any $t$, 
    $$
    Dv_k(t)\overset{*}{\rightharpoonup} Dv_0(t),\quad \text{in }\mathcal M(\Omega)\quad \text{as }k\to\infty,
    $$
    for almost any $t\in(0,T)$, and also 
    $$
    \vert Dv_0(t)\vert(\Omega) \leq \liminf_{k\to\infty}\vert Dv_k(t)\vert (\Omega)\leq C,\quad \text{for a.a. }t\in(0,T), 
    $$
    entailing $v_0\in L^1(0,T;BV(\Omega))$.
Let us consider, wlog, $h\in C_c((0,T)\times\Omega)$ such that $\vert h\vert \leq 1$, then $h(t,\cdot)\in C_c(\Omega)$ and thus it holds, for almost any $t\in(0,T)$, 
\begin{align*}
    \int_\Omega h(t,x)\diff Dv_k\to \int_\Omega h(t,x)\diff Dv_0,\quad \text{as }k\to\infty.
\end{align*}
    Moreover, observe that, by \eqref{sup}, 
    \begin{align*}
      &  \left\vert  \int_\Omega h(t,x)\diff Dv_k\right\vert \leq \sup_k\vert Dv_k(t)\vert(\Omega),\quad \text{ for a.a. }t\in(0,T),
    \end{align*}
    and $\sup_k\vert Dv_k(t)\vert(\Omega)\in L^1(0,T)$ by \eqref{sup}.
    This entails, by Lebesgue's dominated convergence Theorem, that 
\begin{align*}
    \int_0^T\int_\Omega h(t,x)\diff Dv_k\ \diff t\to \int_0^T\int_\Omega h(t,x)\diff Dv_0\ \diff t,\quad \text{as }k\to\infty,
\end{align*}
for any $h\in C_c((0,T)\times\Omega)$, which is exactly \eqref{conv}. The proof is concluded.
\end{proof}
The last lemma we propose here is the generalization for time-varying BV functions of \cite[Lemma 3.7]{CicaleseNagasePisante}, which is an anisotropic version of the Reshetnyak continuity theorem. We postpone its proof to Appendix \ref{Appendix}.
\begin{lemma}
\label{Cic}
 Let $\{v_k\}_k,v_0\in L^1(0,T;BV(\Omega))$ such that the assumptions of Lemma \ref{technical} hold. Let also $K\in C(\Omega)$ be such that $\inf_{x\in \Omega}K(x)\geq K_*>0$. If additionally $\{D v_k\}_k\subset L^1(0,T;L^1(\Omega))$ and
 \begin{align}
 \lim_{k\to \infty}\int_0^T\int_\Omega \phi^{\circ}\left(x,\sqrt{K(x)}Dv_k(t)\right)\diff x=\int_0^T\left\vert \sqrt{K(x)}Dv_0(t)\right\vert_\phi(\Omega)\diff t,
 \label{fond}
 \end{align}
 then for any function $F(t,x,p)\in C([0,T]\times \Omega\times \mathbb R^d)$ satisfying
 \begin{align*}
     F(t,x, sp)=sF(t,x, p)\quad\text{ for }(t,x)\in [0,T]\times\Omega,\ p\in \mathbb R^d,\ s\geq 0,
 \end{align*}
 and 
 \begin{align}
     F(t,x, p)=0\quad\text{ for }(t,x)\not\in K_0,\ p\in \mathbb R^d,\label{van}
 \end{align}
 with $K_0$ a fixed compact subset of $(0,T)\times\Omega$, we have 
 \begin{align}
 \lim_{k\to\infty}\int_0^T\int_\Omega F\left(t,x,\sqrt{K(x)} Dv_k\right)\diff x \diff t=\int_0^T\int_\Omega F\left(t,x,\frac{\nu_{v_0}}{\phi^{\circ}(x,\nu_{v_0})}\right)\diff \left\vert \sqrt{K(x)} Dv_0\right\vert_\phi \ \diff t,
     \label{fin}
 \end{align}
 where $\nu_{v_0}:=\frac{Dv_0}{\vert Dv_0\vert}$.
\end{lemma}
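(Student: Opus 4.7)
The plan is to reduce Lemma~\ref{Cic} to the fixed-time anisotropic Reshetnyak continuity theorem of \cite[Lemma~3.7]{CicaleseNagasePisante} by a slicing argument in $t$. The hypotheses of Lemma~\ref{technical} already provide, slice by slice, the weak-$*$ convergence $Dv_k(t)\rightharpoonup Dv_0(t)$ in $\mathcal M(\Omega)$ and the uniform bound $|Dv_k(t)|(\Omega)\le C$ for a.e.\ $t$; what is missing for the fixed-time theorem is the corresponding pointwise-in-$t$ convergence of the weighted anisotropic $\phi$-total variations. Extracting this from the integrated hypothesis \eqref{fond} is the main obstacle; once it is secured, the $F$-integrals pass to the limit by the fixed-time Reshetnyak theorem applied for a.e.\ $t$, followed by dominated convergence in $t$.

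Let $f_k(t):=\bigl|\sqrt{K(\cdot)}Dv_k(t)\bigr|_\phi(\Omega)$ and $f_0(t):=\bigl|\sqrt{K(\cdot)}Dv_0(t)\bigr|_\phi(\Omega)$. By the $L^1(\Omega)$-lower semicontinuity of the weighted $\phi$-total variation (which follows as in \cite[Thm.~5.1]{2CNP}), we have $f_0(t)\le \liminf_k f_k(t)$ for a.e.\ $t\in(0,T)$. Hence $(f_0-f_k)^+\to 0$ a.e.\ in $(0,T)$ and is dominated by $f_0\in L^1(0,T)$, so by dominated convergence $\int_0^T(f_0-f_k)^+\diff t\to 0$. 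Writing $(f_k-f_0)^+ = (f_0-f_k)^+ + (f_k-f_0)$ and using \eqref{fond}, we deduce $\int_0^T(f_k-f_0)^+\diff t\to 0$ as well, so $f_k\to f_0$ in $L^1(0,T)$. Passing to a (not relabeled) subsequence, $f_k(t)\to f_0(t)$ for a.e.\ $t\in(0,T)$.

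For each such $t$, the fixed-time anisotropic Reshetnyak theorem of \cite[Lemma~3.7]{CicaleseNagasePisante}, applied with weight $\sqrt{K}$ and with the continuous, $1$-homogeneous-in-$p$ test function $(x,p)\mapsto F(t,x,p)$, yields
\begin{align*}
\int_\Omega F\bigl(t,x,\sqrt{K(x)}Dv_k(t)\bigr)\diff x\ \longrightarrow\ \int_\Omega F\!\left(t,x,\frac{\nu_{v_0}}{\phi^{\circ}(x,\nu_{v_0})}\right)\diff\bigl|\sqrt{K(\cdot)}Dv_0(t)\bigr|_\phi.
\end{align*}
Continuity and $1$-homogeneity of $F$ on $[0,T]\times\overline\Omega\times S^{d-1}$ provide $|F(t,x,p)|\le M|p|$ for some $M>0$, and together with the compact support condition \eqref{van} and the bound from \eqref{sup}, both sides of the above display are, as functions of $t$, majorized by a constant times the indicator of a compact subset of $(0,T)$, hence dominated by an $L^1(0,T)$ function. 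Dominated convergence in $t$ then delivers \eqref{fin} along the extracted subsequence, and a standard subsequence-subsequence argument upgrades the conclusion to the full sequence. The only non-routine step is the Fatou-type argument in the second paragraph, where the anisotropic lower semicontinuity is used to convert the integrated limit \eqref{fond} into pointwise convergence in $t$.
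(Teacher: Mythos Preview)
Your proof is correct but follows a genuinely different route from the paper. The paper works directly in the product space $(0,T)\times\Omega$: it pushes the measures $\phi^{\circ}(x,\sqrt{K}\,Dv_k)\,\mathcal L^{d+1}$ forward to the sphere bundle $S=\bigcup_{(t,x)}\{(t,x)\}\times\partial B_{\phi^\circ}(x)$, passes to a weak-$*$ limit, disintegrates, and then uses the strict convexity of $\phi^{\circ}(x,\cdot)$ to identify the fiber measures as Dirac masses at $\nu_{v_0}/\phi^{\circ}(x,\nu_{v_0})$. In other words, the paper essentially reproves the Reshetnyak continuity theorem with $t$ carried along as an extra spatial variable. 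You instead slice in $t$: the Fatou-type argument in your second paragraph (lower semicontinuity of the weighted $\phi$-total variation plus the integrated hypothesis \eqref{fond}) upgrades the time-integrated convergence of anisotropic total variations to $L^1(0,T)$ convergence, hence, along a subsequence, to a.e.-in-$t$ convergence; at that point the static result \cite[Lemma~3.7]{CicaleseNagasePisante} applies verbatim for almost every fixed $t$ (absorbing $\sqrt{K}$ into the Finsler norm), and dominated convergence in $t$ together with the subsequence-of-a-subsequence device closes the argument. Your approach is shorter and reuses the known static lemma rather than reproducing its proof; the paper's approach is self-contained and avoids both the subsequence extraction and the appeal to the external reference.
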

\subsection{General assumptions}
\begin{enumerate}[label=$(\mathbf{A \arabic*})$, ref = $\mathbf{A \arabic*}$]
	\item \label{ass:A} The function $A:\Omega\times \R^d\to \R$  belongs to $C^{2}(\Omega\times (\R^d\setminus\{0\}))\cap C^{1}({\Omega}\times\R^d)$, it is positive on $\Omega\times (\R^d\setminus\{0\})$, and positively two-homogeneous in the second variable: 
    \begin{align*}
        A(x,\lambda p) = \lambda^2 A(x,p)
        \quad \text{for all $x\in\Omega$, $\lambda>0$ and $p\in\R^d$}.
    \end{align*}
    This implies that $A_{p}(x,p)$ is positively 1-homogeneous in the second variable and that there exist $A_0, A_1, a_1$ with $0< A_0\le A_1$ and $a_1>0$ such that 
	\begin{align*}
	    A_0 \abs{p}^2 \le A(x,p) \le A_1  \abs{p}^2,
        \quad
        \abs{A_{p}(x,p)} \le a_1 \abs{p}
	\end{align*}
     for all $(x,p)\in\Omega\times \R^d$. Additionally, we assume that $A(x,\cdot)$ is strictly convex for any $x\in \Omega$. By homogeneity, this implies that $A(x,\cdot)$ is strongly convex~\cite[Lemma 4.3]{alfaromotion}. More precisely  there exists $C_0 >0$ such that
	\begin{align}
	    \big(A_{p}(x,p)-A_{p}(x,q)\big)\cdot(p-q) \ge C_0 |p-q|^2
	    \quad\text{for all $x\in\Omega$, \, $p\ne  q\in\R^d$}.\label{strconv}
	\end{align}
     Finally we assume
     $$
     \abs{A_{p,x}(x,p)} \le a_{2}(x)|p|
     $$
     for some $a_2(x)\in\R$.

    \item \label{ass:double_well} 
    Concerning the double well potential, that we let depend on $x$, we assume that $F$ is factorized in the form 
    $$
    F(x,u) := K(x)W(u):=K(x) u^2(1-u)^2, 
    $$
   where $K\in C^1(\Omega)$ is such that $\inf_{x\in \Omega}K(x)\geq K_*>0$.

   \end{enumerate}

\subsection{Main results}

We study existence of weak solutions, in the sense below, for the system \eqref{eq:CH}. We assume $\Omega=\mathbb T^d$, $d=2,3$.

\begin{definition}[Weak solutions to Cahn-Hilliard equation with disparate mobility]\label{def:weak_sol_degenerate}
 We say that $(u,j)$ is a weak solution to the anisotropic Cahn-Hilliard equation with disparate mobility and initial condition $u_0$ if: 
 \begin{itemize}
 \item $(u,j)$ have regularity 
\begin{align*}
 & u \in L^{\infty}(0,T; H^{1}(\Omega))\cap L^2(0,T; H^2(\Omega))\cap L^{\infty}(0,T; L^4(\Omega)), \quad \p_{t} u \in L^{2}(0,T; (H^{8/5}(\Omega))'), \\ 
 & j \in L^{2}(0,T; L^{8/5}(\Omega)), \\
 & A(x,\nabla u)\in L^{\infty}(0,T; L^1(\Omega)), \quad F(x,u)\in L^{\infty}(0,T; L^1(\Omega)),\\
 & A_{p}(x,\nabla u)\in L^{\infty}(0,T; L^{2}(\Omega)), \quad A_{x}(x,\nabla u)\in L^{\infty}(0,T; L^{1}(\Omega)), \\
 & F_{u}(x,u)\in L^{\infty}(0,T; L^{4/3}(\Omega)), \quad F_{x}(x,u)\in L^{\infty}(0,T; L^{1}(\Omega)),
\end{align*}
\item $(u,j)$ satisfy the weak formulation: for all $\zeta\in C^1_c([0,T)\times\Omega),\, \xi\in C^{2}_c((0,T)\times\Omega;\mathbb R^d)$,
\begin{equation}\label{weakA}
\begin{split}
&\int_{\Omega} u_{0}\zeta(\cdot,t) + \int_{0}^{T}\int_{\Omega}u \partial_{t}\zeta + j\cdot \nabla\zeta\diff x\diff t=0, \\
&\int_{0}^{T}\int_{\Omega}j \cdot\xi \diff x\diff t=\int_{0}^{T} \int_{\Omega}\left((\eps A(x,\nabla u)+\f{1}{\eps}F(x,u))Id-\eps\nabla u\otimes A_{p}(x,\nabla u)\right):\nabla \xi\diff x\diff t\\
&+\int_{0}^{T}\int_{\Omega}\f{1}{\eps}F_{x}(x,u)\cdot\xi\diff x\diff t+\int_{0}^{T}\int_{\Omega} \eps A_{x}(x,\nabla u)\cdot\xi\diff x\diff t\\
&-\int_{0}^{T}\int_{\Omega} (\eps\nabla u \cdot A_{p}(x,\nabla u)+\f{1}{\eps}u F_{u}(x,u))\Div\xi\diff x\diff t \\& - \int_{0}^{T}\int_{\Omega} \eps u A_{p}(x,\nabla u)\cdot \nabla\Div\xi\diff x\diff t.
\end{split}
\end{equation}
\item $(u,j)$ satisfy the energy inequality for almost any $T'\in[0,T]$
\begin{equation*}
 E[u(\cdot,T')] +\int_{0}^{T'}\int_{\Omega}\f{|j|^2}{u}\diff x\diff t\le E[u_{0}].   
\end{equation*}
\end{itemize}
\end{definition}

Here $(H^{8/5}(\Omega))'$ is the topological dual of $H^{8/5}(\Omega)$. Note that this definition is tailored for the case $\Omega$ as a $d-$dimensional torus, since in the case of general domains one has to account for contact angle conditions (see, e.g., \cite{hensel2021bv}).

\begin{remark}
 The proof could be extended to higher dimensions (more precisely when the embedding $H^{2}(\Omega)\hookrightarrow L^{p}(\Omega)$ where $p>4$ holds, that is $d\le 8$) and assuming $K\in C^{2}(\Omega).$ We keep $d=2,3$ for simplicity and physical relevancy.
\end{remark}

We now state our main result concerning existence of weak solutions under the above assumptions. Without loss of generality we assume that the initial condition is a probability measure, i.e., $u_0\in \mathcal M^+(\om)$ and $\abs{u_0}(\om)=1$. We denote this space as $\mathcal P(\om)$.
\begin{thm}[Existence of Weak Solutions]
\label{thm:weak_sol_degenerate}
Suppose $u_{0}\in \mathcal{P}(\Omega)\cap H^{1}(\Omega)$ satisfies 
$$
\Phi[u_0] + E[u_0]<+\infty
$$
where $\Phi$ is the entropy \eqref{eq:entropy}.
Assume that $A(x,p)$ and $F(x,u)$ satisfy Assumptions~\eqref{ass:A} and \eqref{ass:double_well}, respectively.  Then there exists a weak solution $(u,j)$ to \eqref{eq:CH} on $[0,T]$ in the sense of Definition~\ref{def:weak_sol_degenerate}. 
\end{thm}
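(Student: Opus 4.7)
The proof follows the JKO (Jordan-Kinderlehrer-Otto) minimizing movement scheme in the 2-Wasserstein space, which is natural here because \eqref{eq:CH} is formally the $W_2$-gradient flow of $E$: the disparate mobility $m(u)=u$ corresponds precisely to the $W_2$ metric on $\mathcal{P}(\Omega)$. Fix $\tau>0$, set $u^0_\tau=u_0$, and define recursively
\[
u^{n+1}_\tau \in \arg\min_{\rho\in\mathcal{P}(\Omega)} \left\{\frac{W_2^2(\rho,u^n_\tau)}{2\tau}+E[\rho]\right\}.
\]
Existence of a minimizer follows from the direct method: by \eqref{ass:A} and \eqref{ass:double_well}, $E$ is coercive in $H^1\cap L^4$ and lower semicontinuous on $L^1$, while sublevel sets of $E$ are precompact in $L^1$; the Wasserstein term is continuous under weak convergence of measures. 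Summing the minimality inequality $E[u^{n+1}_\tau]+W_2^2(u^{n+1}_\tau,u^n_\tau)/(2\tau)\le E[u^n_\tau]$ yields the discrete dissipation bound, giving uniform $L^\infty(0,T;H^1\cap L^4)$ control on the piecewise-constant interpolant $u_\tau$ and a uniform $1/2$-Hölder bound in time in $W_2$.

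\textbf{Weak form and entropy estimate.} The Euler-Lagrange equation is derived via \emph{inner variations}: perturbing $u^{n+1}_\tau$ through the flow of a smooth vector field $\xi$ and using Brenier's theorem to identify the first variation of $W_2^2/(2\tau)$, one obtains, for every $\xi\in C^2_c(\Omega;\R^d)$,
\[
\int_\Omega u^{n+1}_\tau\,\frac{\mathrm{Id}-T^n_\tau}{\tau}\cdot\xi\,\diff x = \delta E[u^{n+1}_\tau](\xi),
\]
where $T^n_\tau$ is the optimal Monge map from $u^{n+1}_\tau$ to $u^n_\tau$. A direct computation, using two-homogeneity of $A$ in $p$ and integration by parts, rewrites $\delta E[u](\xi)$ exactly as the right-hand side of \eqref{weakA} with the stress tensor $\mathbf{T}_\eps$ from \eqref{stresstensor} and the non-divergence terms $\eps A_x+\eps^{-1}F_x$ arising from the $x$-dependence of $A$ and $F$. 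Setting $j^{n+1}_\tau:=u^{n+1}_\tau(\mathrm{Id}-T^n_\tau)/\tau$ gives the discrete counterpart of \eqref{weakA}. A second key bound comes from a \emph{flow-interchange} argument à la Matthes-McCann-Savaré, using the heat flow as generator for the entropy $\Phi$: this produces the discrete analogue of the identity for $d\Phi/dt$ and, by the strong monotonicity \eqref{strconv} of $A_p$, yields a uniform $L^2(0,T;H^2)$ bound on $u_\tau$, which upon combining with the $L^\infty(0,T;H^1)$ bound delivers the required $L^2(0,T;L^{8/5})$ bound on $j_\tau$ via Hölder's inequality and Sobolev embedding.

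\textbf{Passage to the limit and main obstacle.} Given these uniform estimates, the Aubin-Lions lemma (using the spatial $H^2$ control and the $W_2$-Hölder temporal regularity, which implies equicontinuity in $(H^{8/5})'$) produces a subsequence converging strongly in $L^2(0,T;H^1)$ with a.e.\ convergence of $\nabla u_\tau$. Continuity of $A_p$, $A_x$, $F_u$, $F_x$ together with uniform integrability (from the $H^2$ bound and the $L^4$ bound on $u_\tau$) allows passing to the limit in every term of \eqref{weakA}; if a.e.\ convergence is insufficient for $A_p(x,\nabla u_\tau)$, a Minty-type monotonicity argument based on \eqref{strconv} closes the identification. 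The energy inequality transfers to the limit by lower semicontinuity. The \emph{principal technical difficulty} lies in treating the non-divergence terms $A_x(x,\nabla u)$ and $F_x(x,u)$: they are not produced by the continuity equation $\p_t u=\mathrm{div}\,j$ and must be shown to arise precisely and in the correct strength from the inner variation of $E$, with enough integrability (namely $L^\infty(0,T;L^1)$) to survive the limit. A secondary subtlety is deriving the $H^2$ bound in the anisotropic, spatially inhomogeneous setting: the cross terms generated by $K(x)$ and by the $x$-dependence of $A_p$ must be absorbed into the principal part via strong monotonicity together with interpolation against the uniform $H^1\cap L^4$ bound, and this is where the hypothesis that $K$ is strictly positive and $C^1$ plays a decisive role.
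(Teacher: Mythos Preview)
Your proposal is correct and follows essentially the same JKO/flow-interchange strategy as the paper: minimizing movements in $W_2$, entropy flow-interchange for the $L^2(0,T;H^2)$ bound via the strong monotonicity of $A_p$, and an Aubin--Lions-type compactness to pass to the limit in the nonlinear terms.

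One point deserves correction. The $L^2(0,T;L^{8/5})$ bound on the flux does \emph{not} come from the $H^2$ estimate as you state. It comes instead from the discrete energy dissipation: writing $j^{n+1}_\tau = u^{n+1}_\tau(\mathrm{Id}-T^n_\tau)/\tau = -u^{n+1}_\tau\nabla\varphi/\tau$ via Brenier, one has
\[
\int_\Omega \frac{|j^{n+1}_\tau|^2}{u^{n+1}_\tau}\,\diff x = \frac{W_2^2(u^{n+1}_\tau,u^n_\tau)}{\tau^2},
\]
so that $\|j^{n+1}_\tau\|_{L^{8/5}}\le \|j^{n+1}_\tau/\sqrt{u^{n+1}_\tau}\|_{L^2}\,\|\sqrt{u^{n+1}_\tau}\|_{L^8}$ is controlled by the telescoping energy inequality and the uniform $L^\infty(0,T;L^4)$ bound on $u_\tau$. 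The role of the $H^2$ estimate is different: it upgrades weak $H^1$ convergence to strong (via the generalized Aubin--Lions lemma with the $W_2$ time-equicontinuity), which is what allows you to pass to the limit in quadratic gradient terms such as $\nabla u\otimes A_p(x,\nabla u)$ and makes the Minty backup you mention unnecessary.
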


In what follows, we will set $\phi^{\circ}=\sqrt A$ and let $\eps>0$. To emphasize the dependence on $\eps$, we add an index to the energy $E$, so that $E_\eps$ is defined as 
\begin{align}
E_\eps(u)=\eps\int_\Omega A(x,\nabla u)d x +\frac1\eps\int_\Omega F(x,u)d x,
    \label{Energy}
\end{align}
where $F(x,u)=K(x)W(u)$, with $K\in C^1(\Omega)$ such that $\inf_{x\in \Omega}K(x)\geq K_*>0$.

In accordance with the presentation in \cite{laux_kroemer}, we keep the same notations and introduce the notion of \textit{well-prepared} initial data: assume that the sequence of initial data $u_{\eps,0}\geq0$ is such that 
\begin{align*}
  \sup_{\eps>0}  E_\eps({u_{\eps,0}})<+\infty,\quad \f{1}{|\Omega|} \int_\Omega u_{\eps,0}\diff x=1,\quad \forall \eps>0,
\end{align*}
as well as
\begin{align}
    \label{initial}
    \begin{cases}  
    u_{\eps,0}\to \chi_{\Omega_0},\quad \text{ in }L^1(\Omega),\\
    E_\eps(u_{\eps,0})\to c_0 \mathcal P_\phi^K(\Omega_0),
    \end{cases}
\end{align}
as $\eps\to 0$, where $\Omega_0\subset \Omega$ is an open bounded subset of $\Omega$ with sufficiently smooth boundary, and $c_0:=\psi(1)$, with $\psi(s):=\int_0^s2\sqrt{W(\tau)}d\tau$. Recall that $\chi_{A}$ denotes the indicator function of the measurable set $A\subset \R^d$. The conditions above are sufficient to guarantee, for any fixed $\eps>0$, the existence of a weak solution $(u_\eps,j_\eps)$ to the anisotropic Cahn-Hilliard equation, by means of Theorem \ref{thm:weak_sol_degenerate}.
From now on we set $E_0:=\sup_{\eps>0}E_\eps(u_{\eps,0})$.

First we give a notion of solutions to the anisotropic weighted Hele–Shaw flow in the classical sense.
\begin{definition}
    \label{classicalHeleshaw}
     Let $T\in(0,\infty)$. Let $\widetilde{\Omega}:=\{\Omega(t)\}_{t\in[0,T]}$ be a family of open subsets of $\om$ with smooth boundary, such that $\widetilde{\Omega}$ evolves smoothly in time and $\om(t)$ is simply connected for any $t\in[0,T]$. Assume also that the flux $j:[0,T]\times\Omega\to \R^d$ is smooth. We say that $\widetilde{\Omega}$ and $j$ solve the anisotropic weighted Hele–Shaw equations in the classical sense if they satisfy:
\begin{align}
    \label{a1}\begin{cases}
       \Div j=0,\quad \text{ in }\Omega(t),\\
       V=-j(\cdot,t)\cdot \nu,\quad \text{ on }\partial\Omega(t),
    \end{cases}
\end{align}
and
\begin{align}
    \label{a2}
    \begin{cases}
        j(\cdot,t)=-\nabla p(\cdot,t),\quad\text{in }\Omega(t),\\
        -c_0H_\phi(t)\sqrt{K}+c_0{\nabla \sqrt{K}}\cdot n_\phi=p(\cdot,t),\quad\text{on }\partial\Omega(t).
    \end{cases}
\end{align}
\end{definition}
Here, $c_0\sqrt K$ denotes the anisotropic weighted surface tension, $\nu$ the inner normal to $\partial\Omega(t)$, $H_\phi$ denotes the anisotropic mean curvature of the free boundary $\partial \om(t)$, evolving with normal velocity $V$ (in the direction of the outer normal), and $n_\phi$ is its normal vector in the sense of \eqref{normals}. In this sharp-interface model, the flux $j$ can be viewed as a fluid velocity, and $p$ can be interpreted as pressure.  As observed in \cite{laux_kroemer}, equations \eqref{a1} state that the flow is incompressible and that the free boundary is transported by the fluid velocity. Equation \eqref{a2} are Darcy’s first law and the force balance along
the free boundary between capillary forces and pressure.

We also define the notion of weak solution to the anisotropic weighted Hele-Shaw flow: 
\begin{definition}
    Let $T\in(0,\infty). $ Let $\widetilde{\Omega}:=\{\Omega(t)\}_{t\in[0,T]}$ be a family of finite perimeter sets and let $j\in L^2(0,T;L^2(\Omega;\mathbb R^d))$. We say that the pair $(\widetilde{\Omega},j)$ is a weak solution to the anisotropic weighted Hele-Shaw flow if 
    \begin{itemize}
        \item For all $\zeta\in C^1([0,T)\times\Omega)$ we have
        \begin{align}
            \label{1}
            \int_\Omega \chi_{\Omega_0}\zeta(\cdot,t)\diff x+\int_0^T\int_\Omega \chi_{\Omega(t)}\partial_t\zeta+\chi_{\Omega(t)}j\cdot \nabla \zeta\ \diff x \diff t=0,
        \end{align}
        where $ \chi_{\Omega(t)}(t,x)=\chi_{\Omega(t)}(x)$.
        \item For all $\xi\in C^{1}((0,T)\times\Omega;\mathbb R^d)$ with $\text{div}\xi=0$ we have 
        \begin{align}
         \nonumber&\int_0^T\int_{\Omega(t)}\xi\cdot j(\cdot,t)\diff x  \diff t\\&=c_0\int_0^T\int_{\partial^*\Omega(t)}\text{tr}\left[(Id-n_\phi\otimes \nu_\phi)\nabla \xi+\left(\phi^{\circ}_x(x,\nu_\phi)+\frac{\nabla K(x)}{2K(x)}\right)\otimes \xi\right]\sqrt {K(x)}\phi^{\circ}(x,\nu) d\mathcal H^{d-1}  \diff t,\label{2}
        \end{align}
        where we recall $\phi^{\circ}=\sqrt A$ and $c_0=2\int_0^1 \sqrt {W(s)}d s$.
        \item For almost any $T'\in[0,T]$ we have 
        \begin{align}
            \label{per}
            c_0 \mathcal P_\phi^K(\Omega(T'))+\int_0^{T'}\int_{\Omega(t)}\vert j\vert^2\diff x \ \diff t\leq c_0 \mathcal P_\phi^K(\Omega_0).
        \end{align}
    \end{itemize}
    \label{heleshaw}
\end{definition}
Morever,we will see that, if $j$ and $\widetilde{\Omega}$ are
smooth and are a weak solution to the anisotropic weighted Hele–Shaw flow, then the pair $(\widetilde{\Omega},j)$ solves the anisotropic weighted Hele–Shaw
flow in the classical sense.

We can now state our convergence theorem:
\begin{thm}\label{thm:sharp_interface}
 Let $d=2,3$. Let $(u_\eps, j_\eps)$ be weak solutions to the Cahn–Hilliard equation with well prepared initial data as above.
Then there exists a subsequence $\eps_l\to 0$ and a family of finite perimeter sets $\widetilde{\Omega}:=\{\Omega(t)\}_{t\in[0,T]}\subset \Omega$
such that the following hold:
\begin{itemize}
    \item We have
\begin{align}
u_{\eps_l}\to \chi_{\widetilde{\Omega}},\quad
    \text{strongly in }L^p (0,T;\,L^q(\Omega))
    \quad
    \forall\,1 \le p < \infty,  1 \le q < 4.
    \label{compactness}
\end{align}
\item There exists $j\in L^2(0,T;L^2(\Omega;\mathbb R^d))$ such that 
\begin{align}
    \label{radon}
    j_{\eps_l}\rightharpoonup\chi_{\Omega(t)}j,
\end{align}
weakly in $L^{2}((0,T); L^{1}(\Omega;\R^d)).$
\item If in addition to \eqref{compactness} and \eqref{radon} it holds
\begin{align}
    \label{energ}
    \limsup_{l\to \infty}\int_0^TE_{\eps_l}(u_{\eps_l}(\cdot,t))\diff t\leq \int_0^T c_0\mathcal P_\phi^K(\Omega(t))\diff t,
\end{align}
then $(\widetilde{\Omega},j)$ is a weak solution to the anisotropic weighted Hele-Shaw flow in the sense of Definition \ref{heleshaw}.
\item If $j$ is sufficiently smooth (say $C^1(\overline{\Omega}$)) and $\Omega(t)$, of class $C^{2,\alpha}$, $\alpha\in(0,1]$, evolves smoothly and is simply connected for all t, then $(\widetilde{\Omega}, j)$ is also a classical solution to the anisotropic weighted Hele–Shaw flow \eqref{a1}-\eqref{a2}.
\end{itemize}
\label{conv}
\end{thm}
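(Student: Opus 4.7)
The proof splits into four parts: compactness of $u_\eps$, compactness of $j_\eps$, passage to the limit in the weak formulation under the energy convergence assumption, and recovery of the classical solution in the smooth case.

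\emph{Step 1: Compactness of $u_\eps$.} The starting point is the a priori bound $\sup_{\eps,t} E_\eps(u_\eps(\cdot,t)) \le E_0$ coming from the energy inequality in Definition~\ref{def:weak_sol_degenerate} and the well-preparedness of the initial data. Using assumption~\eqref{ass:A} (so $A(x,p)\ge A_0|p|^2$) and $F=KW$ with $K\ge K_*$, the classical Modica--Mortola trick
\[
\bigl|\nabla(\psi\circ u_\eps)\bigr| \;=\; 2\sqrt{W(u_\eps)}\,|\nabla u_\eps| \;\le\; \eps|\nabla u_\eps|^2 + \tfrac{1}{\eps}W(u_\eps) \;\le\; C\bigl(\eps A(x,\nabla u_\eps)+\tfrac{1}{\eps}F(x,u_\eps)\bigr)
\]
gives a uniform bound of $\psi\circ u_\eps$ in $L^\infty(0,T;BV(\Omega))$. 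Time regularity comes from $\partial_t u_\eps = \Div j_\eps$ combined with the $L^2(0,T;L^1)$ bound on $j_\eps$ (see Step~2). An Aubin--Lions argument adapted to the $BV$ setting then yields strong $L^1$ compactness of $\psi\circ u_\eps$, and since $\psi$ is strictly monotone and the limit measure $\eps A+F/\eps$ forces values in $\{0,1\}$, this produces a finite-perimeter set family $\widetilde{\Omega}=\{\Omega(t)\}$ with $u_{\eps_l}\to \chi_{\widetilde\Omega}$ in $L^1$; interpolation with the $L^\infty(0,T;L^4)$ bound gives~\eqref{compactness}.

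\emph{Step 2: Compactness of $j_\eps$.} Cauchy--Schwarz and mass conservation give
\[
\int_0^T\!\!\int_\Omega |j_\eps|\,\diff x\diff t \;\le\; \Bigl(\int_0^T\!\!\int_\Omega \tfrac{|j_\eps|^2}{u_\eps}\diff x\diff t\Bigr)^{1/2}\Bigl(\int_0^T\!\!\int_\Omega u_\eps\,\diff x\diff t\Bigr)^{1/2} \;\le\; C,
\]
so up to a subsequence $j_{\eps_l}\rightharpoonup \mu$ in $L^2(0,T;\mathcal M(\Omega;\R^d))$. Joint convexity and lower semicontinuity of $(u,j)\mapsto \int |j|^2/u$ combined with $u_{\eps_l}\to\chi_{\widetilde\Omega}$ imply that $\mu\ll\mathcal L^{d+1}$ with density of the form $\chi_{\Omega(t)}j$ and $j\in L^2(0,T;L^2(\Omega;\R^d))$, giving~\eqref{radon}. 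The energy inequality~\eqref{per} follows by weak-$L^2$ lower semicontinuity for the kinetic term and by the energy convergence hypothesis~\eqref{energ} together with the Modica--Mortola $\Gamma$-liminf inequality (using~\eqref{fond}) for the perimeter term.

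\emph{Step 3: Passage to the limit in~\eqref{weakA}.} The continuity equation passes to the limit immediately from Steps~1--2. For the momentum equation, we test against divergence-free $\xi\in C^2_c((0,T)\times\Omega;\R^d)$, which kills the last two lines of~\eqref{weakA} and leaves
\[
\int_0^T\!\!\int_\Omega j_\eps\cdot\xi \;=\; \int_0^T\!\!\int_\Omega \mathbf T_\eps:\nabla\xi + \int_0^T\!\!\int_\Omega \tfrac{1}{\eps}F_x(x,u_\eps)\cdot\xi + \int_0^T\!\!\int_\Omega \eps A_x(x,\nabla u_\eps)\cdot\xi.
\]
Assumption~\eqref{energ}, together with the Modica--Mortola lower bound $\eps A+F/\eps\ge \sqrt K\,\phi^\circ(x,\nabla\psi(u_\eps))$, forces (i) equipartition of energy (vanishing of the discrepancy $\eps A-F/\eps$) and (ii) convergence of $\sqrt K\,|D\psi(u_{\eps_l})|_\phi$ to $c_0 \mathcal P^K_\phi(\Omega(t))$ in the sense of~\eqref{fond}. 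Applying Lemma~\ref{Cic} to suitable homogeneous integrands constructed from $\nabla\xi$ and from the rank-one structure $\nabla u_\eps\otimes A_p(x,\nabla u_\eps)$ (using that $A=(\phi^\circ)^2$ so $\nabla u\otimes A_p = 2\phi^\circ\,\nabla u\otimes\phi^\circ_\xi$) identifies the limit of $\int \mathbf T_\eps:\nabla\xi$ as the first trace term in~\eqref{2}. For the $F_x$-term, $\tfrac{K_x W(u_\eps)}{\eps}=\tfrac{K_x}{K}\cdot\tfrac{F(x,u_\eps)}{\eps}$ converges to $c_0 \int_{\partial^*\Omega}\tfrac{\nabla K}{2K}\cdot\xi\,\sqrt K\phi^\circ \diff\mathcal H^{d-1}$, and the $\eps A_x$-term similarly yields the $\phi^\circ_x\otimes\xi$ contribution via the identity $A_x=2\phi^\circ\phi^\circ_x$ and a further application of Lemma~\ref{Cic}. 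As an alternative for the $\mathbf T_\eps$ term, the anisotropic tilt excess estimate of Laux--Kröemer (controlling how far $\tfrac{\nabla u_\eps}{|\nabla u_\eps|}$ is from $\nu_{\Omega(t)}$ in terms of the energy excess) reproduces the same limit without invoking Reshetnyak.

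\emph{Step 4: Classical from weak.} If $\widetilde\Omega$ and $j$ are smooth, one tests~\eqref{2} with smooth compactly supported divergence-free $\xi$ and applies Theorem~\ref{basic1} in a weighted form to identify $-\Div_\phi n_\phi = H_\phi$ and to produce the boundary term $(-c_0 H_\phi\sqrt K + c_0 \nabla\sqrt K\cdot n_\phi)$; matching against $j\cdot\nu$ on $\partial\Omega(t)$ via~\eqref{1} and introducing a pressure $p$ with $j=-\nabla p$ in $\Omega(t)$ (from $\Div j=0$ in the simply connected region) recovers~\eqref{a1}--\eqref{a2}.

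\emph{Main obstacle.} The principal difficulty is the passage to the limit in the anisotropic stress tensor $\mathbf T_\eps$, since $\nabla u_\eps\otimes A_p(x,\nabla u_\eps)$ is $2$-homogeneous in $\nabla u_\eps$ and requires identifying the normal direction of $\partial^*\Omega(t)$ from the concentrating gradients. This is precisely what the energy convergence~\eqref{energ} buys, via either the anisotropic Reshetnyak theorem of Lemma~\ref{Cic} or the tilt excess approach; the spatially inhomogeneous terms $F_x$ and $A_x$, which have no counterpart in the standard setting, add a further layer requiring the equipartition consequence of~\eqref{energ} to be propagated inside the integrands.
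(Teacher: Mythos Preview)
Your proposal is correct and follows essentially the same architecture as the paper: Modica--Mortola bound plus an Aubin--Lions-type argument for compactness of $u_\eps$, a convexity/lower-semicontinuity argument for the flux, equipartition of energy from~\eqref{energ}, and then either Lemma~\ref{Cic} (Reshetnyak) or the anisotropic tilt excess to pass to the limit in the stress tensor and the inhomogeneous terms. Two small technical differences are worth noting: for time compactness you invoke $\partial_t u_\eps=\Div j_\eps$ and a $BV$-Aubin--Lions argument, whereas the paper uses the Wasserstein equicontinuity inherited from the JKO construction together with the abstract compactness Theorem~\ref{thm:general_lions_aubin}; and for the flux you argue directly via joint convexity of $(u,j)\mapsto\int|j|^2/u$, while the paper factorizes $j_\eps=(j_\eps/\sqrt{u_\eps})\sqrt{u_\eps}$ and applies Ioffe's theorem. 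Both variants lead to the same conclusions.
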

\begin{remark}\label{rem:sharp_interface_convexity}
    In the proof of Theorem \ref{conv} we follow two different approaches. The first makes use of a suitable anisotropic version of  Reshetnyak continuity theorem (see Lemma \ref{Cic}) as already exploited in \cite{CicaleseNagasePisante}. The second one makes use of an anisotropic tilt excess to pass to the limit as in \cite{LauxUllrich}.
\end{remark}

\section{Proof of Theorem \ref{thm:weak_sol_degenerate}: Existence of weak solutions}\label{sect:existence}

In this section, we prove the existence of weak solutions to \eqref{eq:CH}, as stated in Theorem~\ref{thm:weak_sol_degenerate}. The proof is in the spirit of \cite{lisini-ch-gradient-flow,doi:10.1142/S021919972450041X,antonio2024competing,laux_kroemer} and is structured as follows:

\begin{enumerate}
    \item We use a time-discrete approximation, specifically the Jordan--Kinderlehrer--Otto (JKO) scheme, which interprets \eqref{eq:CH} as a Wasserstein gradient flow of the energy \(E[\cdot]\) defined in \eqref{eq:energy}.
    \item We show that each step of the scheme is well-defined and construct the constant interpolation curve of these steps.
    \item We obtain uniform a priori estimates for the discrete curve \(u_{\tau}\), including bounds in \(L^\infty(0,T; H^1(\Omega))\) and \(L^2(0,T; H^2(\Omega))\), using the flow interchange lemma (see Lemma~\ref{lem:flow_interchange_lemma}).
    \item Using compactness arguments, we prove that a subsequence \(u_{\tau}\) converges to \(u\).
    \item We identify the limit \(u\) as a weak solution to \eqref{eq:CH}, as defined in Definition~\ref{def:weak_sol_degenerate}.
\end{enumerate}

\subsection{Preliminaries on the JKO scheme and optimal transport}

We build solutions to the anisotropic Cahn-Hilliard equation using the JKO scheme. This variational scheme was introduced in 1998~\cite{MR1617171} in the case of the Fokker-Planck equation. The result is based on the observation that this equation is a gradient flow in the Wasserstein metric for an energy functional. The anisotropic Cahn-Hilliard equation can also be interpreted as a gradient flow with respect to the Wasserstein metric, associated with the energy functional:
$$
\tilde{E}[u] := 
\begin{cases}
E[u], & \text{if } u \in H^1(\Omega), \\
+\infty, & \text{otherwise},
\end{cases}
$$
where \(E[u]\) is defined in~\eqref{eq:energy}. 

For a fixed time step \(\tau > 0\), we define the JKO scheme for the anisotropic Cahn-Hilliard equation as a sequence of probability measures \(\{u_\tau^n\}_n\), starting with \(u_\tau^0 = u_0\). At each step, \(u_\tau^{n+1}\) is defined by solving the minimization problem:
\begin{equation}\label{eq:preliminary_JKO}
u_\tau^{n+1} \in \argmin_{u \in \mathcal{P}(\Omega)} \left\{ \tilde{E}(u) + \frac{W_2^2(u, u_\tau^n)}{2\tau} \right\},
\end{equation}
where the  Wasserstein metric \(W_2\) is given by:
$$
W_2^2(\rho, \eta) = \inf_{T : \Omega \to \Omega} \left\{ \int_\Omega d(T(x),x)^2\, \mathrm{d}\rho  \middle|  T_\sharp\rho = \eta \right\}.
$$
Here $$
d(x,y) = \inf_{k \in \mathbb{Z}^d} |x- y +k |, \qquad x, y \in \Td,
$$
and \(T_\sharp\rho = \eta\) denotes the pushforward of \(\rho\) under the map \(T\).

This sequence defines a piecewise-constant curve \(t \mapsto u_\tau(t)\) in the space of probability measures, such that \(u_\tau(0) = u_0\) and 
\begin{equation}\label{eq:def_JKO2}
u_\tau(t) = u_\tau^{n+1}, \quad t \in (n\tau, (n+1)\tau],
\end{equation}

and the goal is to prove that $u_{\tau}\to u$ when $\tau\to 0$, where $u$ is a solution of~\eqref{eq:CH}. 

We state some results that are common tools in the JKO scheme. For a detailed proof we refer to~\cite{santambrogio2015optimal}.

Kantorovich provided a dual formulation for the squared Wasserstein distance:
$$
\frac{1}{2} W_2^2(\rho, \eta) = \sup_{\varphi(x) + \psi(y) \leq \frac{1}{2} d(x,y)^2} 
\left\{ \int_\Omega \varphi \, \mathrm{d}\rho + \int_\Omega \psi \, \mathrm{d}\eta \right\}.
$$
The optimal potential \(\varphi\) in this formulation is known as the Kantorovich potential. According to the Brenier theorem~\cite{brenier1987decomposition, brenier1991polar}, \(\varphi\) is Lipschitz continuous, and the function \(\frac{|x|^2}{2} - \varphi(x)\) is convex. Furthermore, \(\varphi\) is related to the optimal transport map \(T\) between $\rho$ and $\eta$ through \(T(x) = x - \nabla\varphi(x)\). Using the dual formulation, the first variation of the Wasserstein distance with respect to \(\rho\) is given by \(\varphi\). For the reader interested in the details, we refer to~\cite{santambrogio2015optimal}.

The optimality condition for \(u_\tau^{n+1}\) can then be written as:
$$
\frac{\varphi}{\tau} - \Div(A_{p}(x,\nabla u_\tau^{n+1} ))+ F_u(x,u_\tau^{n+1}) = C \quad \text{a.e. on } \mathrm{supp}(u_\tau^{n+1}),
$$

where \(C\) is a constant and \(\varphi\)  is the Kantorovich potential for the transport from \(u_\tau^n\) to \(u_\tau^{n+1}\). For this formulation to hold, we need \textit{a priori} to prove that for all $n$, $u_{\tau}^{n}$ is in $H^{2}(\Omega)$, but this can be justified as in~\cite{laux_kroemer, lisini-ch-gradient-flow}. In fact later, we also obtain the $H^2(\Omega)$ estimate using the previous formulation. In some sense the proof of~\cite{laux_kroemer, lisini-ch-gradient-flow} is a rigorous way to obtain the regularity that we show here more formally.

In particular, taking the gradient in the previous equation and multiplying by $u_{\tau}^{n+1}$ we obtain
\begin{equation}\label{eq:optimality_condition}
u_{\tau}^{n+1}\frac{\nabla\varphi}{\tau} - u_{\tau}^{n+1}\nabla\Div(A_{p}(x,\nabla u_\tau^{n+1} )) + u_{\tau}^{n+1}(F_{uu}(x,u_\tau^{n+1})\nabla u_{\tau}^{n+1} + F_{ux}(x,u_{\tau}^{n+1})) = 0\quad \text{a.e. in $\Omega$ },
\end{equation}

This equality is useful when we apply the flow interchange lemma stated below. 
We recall the following result, known as the flow interchange lemma:
\begin{lemma}[Flow interchange lemma]\label{lem:flow_interchange_lemma}
Let \(\rho, \eta \in \mathcal{P}(\Omega)\) be two probability measures, and let \(F\) be a convex function satisfying \(F(0) = 0\) and the McCann condition of geodesic convexity, that is $r\mapsto r^{-d}F(r^d)$ is convex and decreasing. Assume that $\rho\nabla F'(\rho)\in L^{1}(\Omega; \R^d)$. Let \(\varphi\) be the Kantorovich potential for the transport from \(\rho\) to \(\eta\). Then, the following inequality holds:
$$
\int_\Omega F(\eta) \, \mathrm{d}\eta \geq \int_\Omega F(\rho) \, \mathrm{d}\rho - \int_\Omega \rho \nabla(F'(p)) \cdot \nabla\varphi.
$$
\end{lemma}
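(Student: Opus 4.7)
The statement is the standard displacement-convexity inequality for an internal energy along a Wasserstein geodesic, applied at the two endpoints $\rho$ and $\eta$. The plan is to view $\mathcal{F}(\mu) := \int_\Omega F(\mu(x))\diff x$ as a functional on $\mathcal{P}(\Omega)$; the McCann condition is precisely the hypothesis ensuring that $\mathcal{F}$ is geodesically convex in $(\mathcal{P}(\Omega), W_2)$. The target bound is then the one-sided first-order convexity inequality $\mathcal{F}(\eta) \ge \mathcal{F}(\rho) + g'(0^+)$ applied to the convex function $g(t) := \mathcal{F}(\rho_t)$, where $(\rho_t)_{t \in [0,1]}$ is the unique $W_2$-geodesic from $\rho$ to $\eta$, together with an explicit identification of $g'(0^+)$.

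To produce the geodesic I would invoke Brenier's theorem (already cited in the text): the optimal transport map from $\rho$ to $\eta$ is $T = \mathrm{Id} - \nabla\varphi$. Setting $T_t := (1-t)\mathrm{Id} + tT = \mathrm{Id} - t\nabla\varphi$, the displacement interpolation $\rho_t := (T_t)_\sharp \rho$ is a distributional solution of the continuity equation $\partial_t \rho_t + \Div(\rho_t v_t) = 0$, with Lagrangian velocity characterized by $v_t \circ T_t = -\nabla\varphi$; in particular, at $t=0$ one has $v_0 = -\nabla\varphi$ $\rho$-a.e.

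By McCann's displacement-convexity theorem, $g$ is convex on $[0,1]$, hence $g(1) \ge g(0) + g'(0^+)$. I would identify the right derivative through the chain rule in Wasserstein space: formally,
\begin{equation*}
g'(t) = \int_\Omega F'(\rho_t)\,\partial_t \rho_t \diff x = -\int_\Omega F'(\rho_t)\,\Div(\rho_t v_t)\diff x = \int_\Omega \rho_t \nabla F'(\rho_t) \cdot v_t \diff x,
\end{equation*}
which evaluated at $t=0$ yields $g'(0^+) = -\int_\Omega \rho \nabla F'(\rho) \cdot \nabla\varphi\diff x$. Substituting into the convexity inequality gives exactly the stated bound.

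The main obstacle is to make that boundary derivative computation rigorous: $\rho$ is only a probability density, $\varphi$ is merely Lipschitz with $D^2\varphi$ a bounded matrix-valued measure, and $F'$ may be singular at $0$. The integrability hypothesis $\rho\nabla F'(\rho) \in L^1(\Omega;\R^d)$ is precisely what is needed to ensure the integral defining $g'(0^+)$ converges absolutely and that the chain rule above can be justified. The cleanest path is to invoke the Ambrosio--Gigli--Savaré (or Matthes--McCann--Savaré) framework, which, under the McCann condition, establishes existence of the one-sided derivative at the starting point of a geodesic and the above formula without further regularity on $\rho$ or on $\varphi$.
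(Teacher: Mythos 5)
Your argument is, at bottom, the same one the paper relies on: the paper does not reprove the lemma but imports it from \cite[Lemma 2.4]{dimarino}, and that proof is exactly the displacement-convexity scheme you sketch (geodesic $\rho_t=(\mathrm{Id}-t\nabla\varphi)_\sharp\rho$, convexity of $t\mapsto\mathcal F(\rho_t)$ under McCann's condition, monotonicity of difference quotients, identification of the one-sided derivative at $t=0$). So the outline is correct; note only that you do not need existence of $g'(0^+)$ as a limit — convexity gives $g(1)-g(0)\ge (g(t)-g(0))/t$ for all $t$, so a lower bound on the difference quotients as $t\downarrow 0$ suffices. The place where your write-up is looser than it appears is the final delegation to the Ambrosio--Gigli--Savar\'e machinery ``without further regularity on $\rho$ or $\varphi$'': the AGS characterization of the subdifferential of internal energies and the associated above-the-tangent inequality are stated for velocity fields $w$ with $\rho w=\nabla L_F(\rho)$ and $w\in L^2(\rho)$ (and on $\R^d$), whereas the lemma assumes only $\rho\nabla F'(\rho)\in L^1(\Omega;\R^d)$. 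What rescues the $L^1$ hypothesis here is specific to the torus: $\nabla\varphi=x-T(x)$ is in $L^\infty$, so the pairing $\int\rho\,\nabla F'(\rho)\cdot\nabla\varphi$ is finite and the endpoint-derivative computation (or an approximation argument replacing the Lipschitz assumption on $\rho$ made in \cite{dimarino}) goes through with no boundary terms in the integration by parts and with geodesics remaining in the domain. These are precisely the adaptations the paper spells out when transplanting the cited lemma to $\mathbb T^d$; if you keep the AGS route you should either verify its $L^2(\rho)$ hypothesis or make the $L^\infty$ bound on $\nabla\varphi$ do the work explicitly, rather than asserting the framework applies verbatim.
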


\begin{proof}
 The proof is exactly the one from~\cite[Lemma 2.4]{dimarino}. However the authors prove it on a convex domain $\Omega$ and assume $\rho$ is Lipschitz continuous. The proof can be easily adapted on the torus, as the only point to check is an integration by parts where the boundary term vanishes. Another point to check is that geodesics curve stay inside the domain, which is the case for the torus (a similar proof would not work on open non-convex domains subsets of $\R^d$ for instance). Concerning Lipschitz continuity we can always work by approximation and obtain the final inequality stated in the lemma. Indeed $\nabla\varphi = x-T(x)$ is bounded in $L^{\infty}(\Omega)$ on the torus as both $x$ and $T(x)$ stay inside the domain. Therefore assuming $\rho\nabla F'(\rho)\in L^{1}(\Omega;\R^d)$ is enough.
\end{proof}

This lemma is particularly useful to obtain $H^2$ estimates on the solution of the JKO scheme. Indeed, it allows to compute the dissipation of another functional (in this case the entropy, which is known to be geodesically convex) along the solutions of the JKO scheme.

\subsection{JKO scheme}

Before starting the proof, we state a lemma which proves strong compactness of a sequence with bounded energy and lower semi-continuity of the energy functional with respect to the same sequence.

\begin{lemma}[Lower semi-continuity of the energy functional]\label{lem:property_E}
Let $(u_k)_{k\in\mathbb{N}}$ be a sequence of probability measures and of uniformly bounded energy, that is there exists a constant $C>0$ such that for all $k$, 
$$
\tilde{E}[u_k] \le C. 
$$
Then, there exists $u$ such that up to a subsequence (not relabeled), 
\begin{align*}
\nabla u_k &\rightharpoonup \nabla u \quad \text{weakly in $L^{2}(\Omega)^d$},\\
u_k &\to u \quad \text{strongly in $L^{p}(\Omega)$, for $1\le p<6$}.
\end{align*}
Moreover, 
$$
E[u]\le \liminf_{k\to+\infty}E[u_k]. 
$$
\end{lemma}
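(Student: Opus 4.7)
The strategy is to first convert the uniform energy bound into a uniform $H^1$ bound, then extract a compact subsequence via Rellich--Kondrachov, and finally handle the two pieces of $E$ separately: continuity for the potential part (via strong $L^4$ convergence) and classical convex lower semicontinuity for the gradient part.

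First, I would use Assumption \eqref{ass:A} together with the fact that $F(x,u)\ge 0$ to get
\[
A_0 \,\eps \,\|\nabla u_k\|_{L^2}^2 \;\le\; \eps \int_\Omega A(x,\nabla u_k)\diff x \;\le\; \tilde E[u_k] \;\le\; C.
\]
Since each $u_k$ is a probability measure on the torus, $\tfrac{1}{|\Omega|}\int u_k = \tfrac{1}{|\Omega|}$ is a fixed constant, so Poincaré--Wirtinger gives a uniform $L^2$ bound on $u_k$, hence on $\|u_k\|_{H^1}$. By reflexivity there exists $u\in H^1(\Omega)$ with $u_k \rightharpoonup u$ weakly in $H^1(\Omega)$ along a subsequence (which I will not relabel), and in particular $\nabla u_k \rightharpoonup \nabla u$ weakly in $L^2(\Omega)^d$. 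Because $\Omega=\mathbb T^d$ with $d=2,3$, the compact Sobolev embedding $H^1(\Omega)\hookrightarrow\hookrightarrow L^p(\Omega)$ for every $1\le p<6$ gives the desired strong convergence $u_k\to u$ in $L^p(\Omega)$ for $1\le p<6$.

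For the lower semicontinuity, I would split $E[u] = \eps\int A(x,\nabla u) + \tfrac{1}{\eps}\int F(x,u)$. The potential part is actually continuous along the subsequence: $F(x,u)=K(x)W(u)$ with $K\in C(\overline\Omega)$ bounded and $W(u)=u^2(1-u)^2$ a polynomial of degree $4$. Since $u_k\to u$ strongly in $L^4(\Omega)$ (using $4<6$), continuity of the Nemytskii operator yields $F(\cdot,u_k)\to F(\cdot,u)$ in $L^1(\Omega)$, so
\[
\int_\Omega F(x,u_k)\diff x \;\longrightarrow\; \int_\Omega F(x,u)\diff x.
\]
For the anisotropic gradient term, $A:\Omega\times\R^d\to[0,\infty)$ is a Carathéodory integrand (in fact $C^1$ in both variables), convex in $p$ by Assumption \eqref{ass:A} (the uniform strong monotonicity of $A_p$ in $p$ implies strong convexity of $A(x,\cdot)$), and bounded below by $A_0|p|^2\ge 0$. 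The classical Ioffe--Olech lower semicontinuity theorem for integral functionals with convex integrands then applies and, combined with $\nabla u_k \rightharpoonup \nabla u$ in $L^2(\Omega)^d$, yields
\[
\int_\Omega A(x,\nabla u)\diff x \;\le\; \liminf_{k\to\infty}\int_\Omega A(x,\nabla u_k)\diff x.
\]
Adding the two pieces gives $E[u]\le \liminf_{k} E[u_k]$ as required.

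The only mildly delicate step is the lower semicontinuity of the anisotropic term, since $A$ depends on $x$; however, the $C^1$ regularity (hence Carathéodory property), nonnegativity, and pointwise convexity in $p$ are exactly the hypotheses of the standard convex lsc result for integral functionals, so no extra work is needed. All other steps are routine Sobolev embedding and continuity of superposition operators.
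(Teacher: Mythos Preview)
Your proof is correct and follows essentially the same approach as the paper's. The only minor difference is that you obtain the uniform bound on $u_k$ itself (needed to control $\|u_k\|_{H^1}$) via Poincar\'e--Wirtinger using the fixed mass constraint, whereas the paper extracts a uniform $L^4$ bound on $u_k$ directly from the potential term $\tfrac{1}{\eps}\int F(x,u_k)\le C$ (using that $F(x,u)\gtrsim u^4$ for large $u$); the remainder of the argument---Rellich--Kondrachov for strong $L^p$ convergence with $p<6$, convex weak lower semicontinuity for the anisotropic gradient term, and continuity of the potential term under strong $L^4$ convergence---is identical.
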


\begin{proof}[Proof of Lemma~\ref{lem:property_E}]

From the uniform bound $\tilde{E}[u_k] \leq C$ and the definition of $E[u]$ in \eqref{eq:energy}, we have:
\begin{equation}\label{energy_decomp}
\eps \int_\Omega A(x,\nabla u_k)\diff x + \frac{1}{\eps} \int_\Omega F(x,u_k)\diff x \leq C.
\end{equation}

Using the lower bound from Assumption~\eqref{ass:A}:
$$
A(x,p) \geq A_0|p|^2 \quad \forall x\in\Omega,\,p\in\R^d,
$$
we get from \eqref{energy_decomp}:
$$
\eps A_0 \int_\Omega |\nabla u_k|^2\diff x \leq \eps \int_\Omega A(x,\nabla u_k)\diff x \leq C.
$$
Thus:
\begin{equation}\label{grad_bound}
\|\nabla u_k\|_{L^2(\Omega)} \leq \frac{C^{1/2}}{(\eps A_0)^{1/2}} \quad \forall k\in\N.
\end{equation}

From Assumption~\eqref{ass:double_well} and the structure $F(x,u) = K(x)u^2(1-u)^2$ with $K(x) \geq K_* > 0$, we have:
$$
F(x,u) \geq K_* u^2(1-u)^2 \geq \frac{K_*}{2}u^4 \quad \text{when } u \geq 4.
$$
Using \eqref{energy_decomp}:
$$
\frac{K_*}{2\eps} \int_{u_k \geq 4} |u_k|^4\diff x \leq \frac{1}{\eps} \int_\Omega F(x,u_k)\diff x \leq C.
$$
Using the nonnegativity of $u_k$ this gives:
$$
\int_\Omega |u_k|^4\diff x \leq \frac{4\eps C}{K_*} + \int_{0\le u_k\le 4} |u_k|^4\diff x \leq \frac{4\eps C}{K_*} + 4^4|\Omega|.
$$
Hence:
\begin{equation}\label{L4_bound}
\|u_k\|_{L^4(\Omega)} \leq C \quad \forall k\in\N.
\end{equation}

From \eqref{grad_bound} and \eqref{L4_bound}, the sequence $\{u_k\}$ is bounded in $H^1(\Omega)\cap L^4(\Omega)$. By weak compactness there exists
$u \in H^1(\Omega)\cap L^4(\Omega)$ and a subsequence (not relabeled) such that:
$$
u_k \rightharpoonup u \quad \text{weakly in } H^1(\Omega),
$$
$$
u_k \rightharpoonup u \quad \text{weakly in } L^4(\Omega).
$$

For the gradient term, since $A(x,\cdot)$ is convex (Assumption~\eqref{ass:A}) and $\nabla u_k \rightharpoonup \nabla u$ weakly in $L^2(\Omega)^d$, we have by weak lower semicontinuity:
$$
\int_\Omega A(x,\nabla u)\diff x \leq \liminf_{k\to\infty} \int_\Omega A(x,\nabla u_k)\diff x.
$$
For the potential term,  $u_k \rightharpoonup u$ weakly in $L^4(\Omega)$, $F(x,u_k) = K(x)u_k^2(1-u_k)^2$  can be written as $F=F_1 + F_2$ where $F_1$ is convex and $F_2$ is uniformly bounded, $K\in C^2(\Omega)$, we obtain:
$$
\lim_{k\to\infty} \int_\Omega F(x,u_k)\diff x = \int_\Omega F(x,u)\diff x.
$$
Combining both results:
$$
E[u] \leq \liminf_{k\to\infty} E[u_k].
$$

\end{proof}

The previous lemma is particularly useful to prove that the sequence of the JKO scheme is well-defined. We also have some first estimates, classical in the JKO scheme. 
\begin{proposition}\label{prop:scheme_wellposedness}
Let $\tau>0$. Let $u_{0}\in\mathcal{P}(\Omega)$ such that $\tilde{E}[u_{0}]<+\infty$. Then the scheme defined by~\eqref{eq:preliminary_JKO} is well defined. Moreover, there exists $C$ independent of $\tau$ and $n$ such that
$$
\int_{\Omega}|\nabla u_{\tau}^{n}|^2\diff x \le C, \quad \int_{\Omega}|u_{\tau}^{n}|^{4}\diff x\le C. 
$$
This implies that the interpolation curve $u_{\tau}$ is bounded uniformly in $L^{\infty}(0,T; H^{1}(\Omega))\cap L^{\infty}(0,T; L^{4}(\Omega))$. Moreover  $u_{\tau}$ is equicontinuous in the Wasserstein metric, i.e., 
\begin{equation}\label{equicontinuity_wasserstein}
W_{2}(u_{\tau}(t),u_{\tau}(s)) \le \sqrt{2E[u_0]}\sqrt{|t-s|+\tau}.
\end{equation}
\end{proposition}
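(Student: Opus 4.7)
The plan is to organize the proof into three parts, following the order in the statement: first existence of a minimizer at each JKO step, then the uniform a priori bounds, and finally the Wasserstein equicontinuity. Throughout, I would rely on the standard variational tools of the JKO scheme, together with Lemma~\ref{lem:property_E} for the compactness/lower semicontinuity of the energy.

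\textbf{Step 1: well-posedness of each step.} To show that \eqref{eq:preliminary_JKO} admits a minimizer, I would use the direct method. Given $u_\tau^n\in\mathcal{P}(\Omega)$ with finite energy, the constant choice $u=u_\tau^n$ shows that the infimum is finite and bounded above by $\tilde E[u_\tau^n]$. I would then take a minimizing sequence $(v_k)\subset \mathcal{P}(\Omega)$. The Wasserstein term is nonnegative, so $\tilde E[v_k]$ is uniformly bounded, and Lemma~\ref{lem:property_E} applies: up to a subsequence, $v_k\to v$ strongly in $L^p(\Omega)$ for $1\le p<6$ (in particular in $L^1$, so $v\in\mathcal{P}(\Omega)$), weakly in $H^1$, and $E[v]\le\liminf E[v_k]$. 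Strong $L^1$ convergence on the torus implies narrow convergence, hence lower semicontinuity of $W_2^2(\cdot,u_\tau^n)$, so $v$ is a minimizer. This defines $u_\tau^{n+1}$.

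\textbf{Step 2: a priori bounds.} Comparing the minimum value at $u_\tau^{n+1}$ with the competitor $u_\tau^n$ yields the one-step dissipation inequality
\begin{equation*}
\tilde E[u_\tau^{n+1}] + \frac{W_2^2(u_\tau^{n+1},u_\tau^n)}{2\tau}\le \tilde E[u_\tau^n].
\end{equation*}
Iterating from $0$ to $N-1$ gives the classical energy/dissipation estimate
\begin{equation*}
\tilde E[u_\tau^{N}] + \sum_{n=0}^{N-1}\frac{W_2^2(u_\tau^{n+1},u_\tau^n)}{2\tau}\le \tilde E[u_0] = E[u_0].
\end{equation*}
In particular $\tilde E[u_\tau^n]\le E[u_0]$ uniformly in $\tau$ and $n$. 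Exactly as in the proof of Lemma~\ref{lem:property_E}, the coercivity estimates $A(x,p)\ge A_0|p|^2$ and $F(x,u)\ge K_* u^2(1-u)^2$ translate the energy bound into a uniform $H^1$ bound (with constant depending on $\eps A_0$ and $E[u_0]$) and into a uniform $L^4$ bound for $u_\tau^n$. Since $u_\tau$ is piecewise constant in time with values $u_\tau^{n+1}$, these bounds immediately give $u_\tau\in L^\infty(0,T;H^1(\Omega)\cap L^4(\Omega))$ uniformly in $\tau$.

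\textbf{Step 3: Wasserstein equicontinuity.} For $0\le s<t\le T$, let $m,n$ be such that $s\in(m\tau,(m+1)\tau]$ and $t\in(n\tau,(n+1)\tau]$, so that $u_\tau(s)=u_\tau^{m+1}$ and $u_\tau(t)=u_\tau^{n+1}$. By the triangle inequality for $W_2$ and the Cauchy--Schwarz inequality,
\begin{equation*}
W_2(u_\tau(t),u_\tau(s)) \le \sum_{k=m+1}^{n}W_2(u_\tau^{k+1},u_\tau^{k})\le \sqrt{n-m}\left(\sum_{k=m+1}^{n}W_2^2(u_\tau^{k+1},u_\tau^k)\right)^{1/2}.
\end{equation*}
Using the telescoping estimate from Step~2, the bracketed sum is bounded by $2\tau E[u_0]$, so
\begin{equation*}
W_2(u_\tau(t),u_\tau(s))\le \sqrt{2E[u_0]}\sqrt{(n-m)\tau}\le \sqrt{2E[u_0]}\sqrt{|t-s|+\tau},
\end{equation*}
which is exactly \eqref{equicontinuity_wasserstein}. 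There is no real obstacle here; the only mildly delicate point is the lower semicontinuity of $W_2^2$ along the minimizing sequence in Step~1, which is standard on the compact torus thanks to the equivalence of narrow and $L^1$ convergence for probability densities.
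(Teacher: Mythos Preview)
Your proof is correct and follows essentially the same approach as the paper: direct method together with Lemma~\ref{lem:property_E} for existence of each minimizer, the one-step dissipation inequality telescoped to get the uniform energy bound (hence $H^1$ and $L^4$ control via the coercivity estimates), and the triangle inequality plus Cauchy--Schwarz for the Wasserstein equicontinuity. If anything, your write-up is slightly cleaner, in particular in making explicit why $W_2^2$ is lower semicontinuous along the minimizing sequence and in handling the triangle inequality for $W_2$ rather than $W_2^2$.
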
 

\begin{proof}
We assume the first terms $(1,...,n)$ of the sequence to be already constructed and $E[u_{\tau}^n]<+\infty$. The functional of the scheme defining $u_{\tau}^{n+1}$ is bounded as $E(u_{\tau}^n) + \f{W^{2}_{2}(u_{\tau}^{n},u_{\tau}^{n})}{2\tau}= E(u_{\tau}^n)<+\infty$. We define a minimizing sequence $\{v_{k}\}_k$. Of course, we can assume $E[v_k]\le C$ for a uniform constant $C$, otherwise we do not reach the minimum. Since the squared Wasserstein distance is lower semi-continuous and with Lemma~\ref{lem:property_E} we conclude that $v_k$ converges to a minimizer that we call $u^{k+1}_{\tau}$. Therefore the scheme is well posed. 

The estimates on $u_{\tau}^{n}$ uniform in $n$ are a consequence of the fact that the energy of the solutions remains bounded (as $E[u_{\tau}^{n+1}]\le E[u_{\tau}^{n}]$ for instance) and the computations performed in Lemma~\ref{lem:property_E}. Concerning the estimate on the Wasserstein distance, from the optimality of $u_\tau^{n+1}$:
$$
\tilde{E}(u_\tau^{n+1}) + \frac{1}{2\tau} W_2^2(u_\tau^{n+1}, u_\tau^n) \leq \tilde{E}(u_\tau^n).
$$
Summing over $n=0,\dots,N-1$ (where $N\tau \leq T$):
$$
\tilde{E}(u_\tau^N) + \frac{1}{2\tau} \sum_{n=0}^{N-1} W_2^2(u_\tau^{n+1}, u_\tau^n) \leq \tilde{E}(u_0).
$$
Hence, $\tilde{E}(u_\tau^n) \leq \tilde{E}(u_0)$ for all $n$, and:
\begin{equation}\label{eq:telescopic_wasserstein}
\sum_{n=0}^{N-1} W_2^2(u_\tau^{n+1}, u_\tau^n) \leq 2\tau \tilde{E}(u_0).
\end{equation}

For $t,s \in [0,T]$ with $t \in (n\tau, (n+1)\tau]$ and $s \in (m\tau, (m+1)\tau]$, the triangle inequality gives:
$$
W_2(u_\tau(t), u_\tau(s)) \leq \sum_{k=m}^{n-1} W_2(u_\tau^{k+1}, u_\tau^k).
$$
Using the Cauchy-Schwarz inequality we then obtain
$$
\sum_{k=m}^{n-1} W_2(u_\tau^{k+1}, u_\tau^k) \leq \sqrt{(n - m) \sum_{k=m}^{n-1} W_2^2(u_\tau^{k+1}, u_\tau^k)} \leq \sqrt{2\tilde{E}(u_0)(|t - s| + \tau)}.
$$
Hence:
$$
W_2(u_\tau(t), u_\tau(s)) \leq \sqrt{2\tilde{E}(u_0)} \sqrt{|t - s| + \tau},
$$
concluding the proof of the proposition.
\end{proof}

From the estimates found in the previous proposition it is possible with the Arzela-Ascoli theorem to prove that $u_{\tau}\to u$ uniformly in the Wasserstein distance, and to upgrade this convergence to a weak convergence in $H^{1}(\Omega)$. However, in the weak formulation of the anistropic Cahn-Hilliard equation~\eqref{weakA}, we observe the presence of nonlinear terms in gradients, for instance $\nabla u\cdot A_{p}(x,\nabla u)$ (which is $|\nabla u|^2$ in the isotropic case). We deduce from this that the weak $H^1$ convergence is not enough, since we need strong $H^1$ convergence. To obtain the strong $H^1$ convergence, we rely on an $H^2$ estimate, found by \emph{dissipating the entropy} in the JKO scheme with the flow interchange lemma. 

\begin{proposition}[Second order estimate on the scheme]
\label{prop3.4}
Let $u_{\tau}^{n}$ be the sequence of the scheme defined in Proposition~\ref{prop:scheme_wellposedness}. Then
\begin{align}
&\nonumber\int_{\Omega}u_{\tau}^{n+1} \log u_{\tau}^{n+1} \diff x+ \tau \int_{\Omega}\Div(A_{p}(x,\nabla u_{\tau}^{n+1}))\Delta u_{\tau}^{n+1} \diff x+ \tau\int_{\Omega}F_{uu}(x,u_{\tau}^{n+1})|\nabla u_{\tau}^{n+1}|^2\diff x \\ &+ \tau\int_{\Omega}F_{ux}(x,u_{\tau}^{n+1})\cdot\nabla u_{\tau}^{n+1}\diff x \le \int_{\Omega}u_{\tau}^{n}\log u_{\tau}^{n}\diff x.\label{ineq1}
\end{align}

As a consequence there exists $C$ such that for all  $\tau$:
\begin{equation*}
\|u_{\tau}\|_{L^{2}(0,T;H^{2}(\Omega))}\le C.
\end{equation*}
\end{proposition}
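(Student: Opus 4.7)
The plan is to apply the flow interchange lemma (Lemma~\ref{lem:flow_interchange_lemma}) to the Boltzmann entropy $\Phi(u)=\int_\Omega u\log u\,\diff x$ along one JKO step; this functional is geodesically convex in the Wasserstein metric and satisfies $u\nabla F'(u)=\nabla u$ for $F(u)=u\log u$. Taking $\rho=u_\tau^{n+1}$ and $\eta=u_\tau^n$ in the lemma and rearranging yields
\[
\int_\Omega u_\tau^{n+1}\log u_\tau^{n+1}\,\diff x \;\le\; \int_\Omega u_\tau^n\log u_\tau^n\,\diff x \;+\; \int_\Omega \nabla u_\tau^{n+1}\cdot\nabla\varphi\,\diff x,
\]
where $\varphi$ is the Kantorovich potential for the transport from $u_\tau^{n+1}$ to $u_\tau^n$. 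I then invoke the optimality condition~\eqref{eq:optimality_condition} and the fact that $\nabla u_\tau^{n+1}=0$ a.e.\ on $\{u_\tau^{n+1}=0\}$ (Stampacchia's lemma) to rewrite the last integral as $\tau\int \nabla u_\tau^{n+1}\cdot\bigl(\nabla\Div(A_p(x,\nabla u_\tau^{n+1}))-F_{uu}\nabla u_\tau^{n+1}-F_{ux}\bigr)\,\diff x$. One integration by parts on the torus (no boundary terms) replaces $\nabla u_\tau^{n+1}\cdot\nabla\Div(A_p)$ by $-\Delta u_\tau^{n+1}\Div(A_p)$, producing exactly~\eqref{ineq1}.

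For the $L^2(0,T;H^2(\Omega))$ estimate, I telescope~\eqref{ineq1} over $n=0,\ldots,N-1$ with $N\tau=T$. The anisotropic dissipation term is treated via two successive integrations by parts, leading to
\[
\int_\Omega \Delta u\,\Div(A_p(x,\nabla u))\,\diff x \;=\; \int_\Omega A_{p_ip_j}(x,\nabla u)\,\partial_k\partial_i u\,\partial_k\partial_j u\,\diff x \;+\; \int_\Omega A_{p_i x_k}(x,\nabla u)\,\partial_i\partial_k u\,\diff x.
\]
The strong convexity~\eqref{strconv} implies $A_{pp}\succeq C_0 I$ as a quadratic form, so the first summand is bounded below by $C_0\|D^2 u\|_{L^2}^2$; the cross term is controlled using the assumption $|A_{p,x}(x,p)|\le a_2(x)|p|$ together with Young's inequality, yielding the pointwise-in-time estimate $\int \Delta u\,\Div(A_p) \ge \tfrac{C_0}{2}\|D^2 u\|_{L^2}^2 - C\|\nabla u\|_{L^2}^2$.

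The remaining $F_{uu}$ and $F_{ux}$ contributions in the telescoped sum are handled via $|F_{uu}|\le C(1+u^2)$ and $|F_{ux}|\le C(|u|+u^3)$, combined with the uniform $L^\infty(0,T;H^1\cap L^4)$ bounds from Proposition~\ref{prop:scheme_wellposedness} and a Gagliardo--Nirenberg interpolation (so that any piece requiring $H^2$ control is absorbed into a small multiple of $\tau\sum\|D^2 u_\tau^{n+1}\|_{L^2}^2$ via Young's inequality). Since $u\log u\ge -1/e$ on a torus of unit total mass, the telescoped inequality yields the desired bound $\|u_\tau\|_{L^2(0,T;H^2(\Omega))}\le C$.

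The principal technical obstacle is rigor: the flow interchange lemma, the division by $u_\tau^{n+1}$ implicit in passing from~\eqref{eq:optimality_condition} to a pointwise identity for $\nabla\varphi$, and the second integration by parts all presuppose that $u_\tau^{n+1}$ is sufficiently smooth (and, for the division step, strictly positive). This is standardly addressed by replacing $\tilde E$ by a higher-order regularization of the form $\tilde E+\delta\int|\Delta u|^2\,\diff x$, running the whole argument at the regularized level where the minimizer inherits the required smoothness, and then passing to the limit $\delta\to 0$ using the uniform estimates just obtained.
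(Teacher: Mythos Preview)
Your proof is correct and follows essentially the same route as the paper: flow interchange with the Boltzmann entropy, the optimality condition~\eqref{eq:optimality_condition}, an integration by parts to reach~\eqref{ineq1}, and then the two-integration-by-parts identity for $\int\Div(A_p)\Delta u$ combined with strong convexity (which the paper packages as Lemma~\ref{lem:H^2_estimate}). The only cosmetic difference is that the paper controls the $F_{uu}$ term via the pointwise lower bound $F_{uu}(x,u)\ge -K^{**}$ rather than your interpolation argument, which is slightly simpler but equivalent in effect.
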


\begin{remark}
Up to integrating by parts and using~\eqref{eq:optimality_condition}, the term $ \int_{\Omega}\Div(A_{p}(x,\nabla u_{\tau}^{n+1}))\Delta u_{\tau}^{n+1} \diff x$ makes sense. At the continuous level, the same computations can be performed by dissipating the entropy $\f{d}{dt}\int_{\Omega} u\log u$. As usual in the Cahn-Hilliard equation with degenerate mobility (see \cite{elliott_cahn-hilliard_1996}), this provides $L^{2}(0,T; H^{2}(\Omega))$ where estimates on the solution. 
\end{remark}

The $H^2$ estimate comes from the term $\int_{\Omega}\Div(A_{p}(x,\nabla u_{\tau}^{n+1}))\Delta u_{\tau}^{n+1} $. Indeed in the isotropic version of the Cahn-Hilliard equation, this term reads $\int_{\Omega} |\Delta u|^2\left(= \int_{\Omega}|D^2u|^2\right)$. By performing subtle integration by parts, as in~\cite{Patrik}, we can indeed prove the following lemma.

\begin{lemma}\label{lem:H^2_estimate}
Under Assumptions~\eqref{ass:A}-\eqref{ass:double_well}, there exists $C = C(C_0,a_2,d)$ such that for any $u \in H^2(\Omega)$:
\begin{equation}\label{eq:H2-est}
\int_\Omega |D^2 u|^2 \diff x \leq C\left(\int_\Omega \Div(A_p(x,\nabla u))\Delta u \diff x + \int_\Omega |\nabla u|^2 \diff x\right).
\end{equation}
\end{lemma}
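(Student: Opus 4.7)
The strategy is to integrate by parts twice on the torus to convert the left-hand side of $\int_\Omega \Div(A_p(x,\nabla u))\Delta u\diff x$ into a positive-definite quadratic form in $D^2u$, whose ellipticity is controlled from below by the strong-monotonicity constant $C_0$ from \eqref{strconv}, plus a harmless cross term which will be absorbed using Young's inequality. Since $\Omega=\mathbb T^d$ has no boundary, I would first write
$$
\int_\Omega \Div(A_p(x,\nabla u))\,\Delta u \diff x
= -\int_\Omega A_p(x,\nabla u)\cdot\nabla\Delta u \diff x
= -\int_\Omega \sum_{i,k} A_{p_i}(x,\nabla u)\,\partial_{i}\partial_{kk}u \diff x,
$$
and then integrate by parts a second time in $x_k$, applying the chain rule
$\partial_k\bigl[A_{p_i}(x,\nabla u)\bigr]=A_{p_i x_k}(x,\nabla u)+\sum_j A_{p_i p_j}(x,\nabla u)\,\partial_{jk}u$, to obtain
$$
\int_\Omega \Div(A_p(x,\nabla u))\Delta u \diff x
= \int_\Omega \sum_{i,j,k}A_{p_i p_j}(x,\nabla u)\partial_{ik}u\,\partial_{jk}u\diff x
+ \int_\Omega \sum_{i,k}A_{p_i x_k}(x,\nabla u)\,\partial_{ik}u\diff x.
$$

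For each fixed $k$, the integrand of the principal term equals $(\partial_k\nabla u)^\top A_{pp}(x,\nabla u)(\partial_k\nabla u)$. Differentiating the strong-monotonicity inequality \eqref{strconv} in the direction $\xi$ and letting the perturbation vanish yields the pointwise ellipticity $\xi^\top A_{pp}(x,p)\xi \geq C_0|\xi|^2$ for $p\neq 0$. Summing in $k$ gives $\int_\Omega\sum_{i,j,k}A_{p_i p_j}(x,\nabla u)\partial_{ik}u\,\partial_{jk}u \diff x \geq C_0 \int_\Omega |D^2u|^2\diff x$. For the cross term, the assumption $|A_{p,x}(x,p)|\leq a_2(x)|p|$ together with Cauchy--Schwarz and Young's inequality yields
$$
\left|\int_\Omega \sum_{i,k}A_{p_i x_k}(x,\nabla u)\partial_{ik}u\diff x\right|
\leq \|a_2\|_{\infty} \int_\Omega|\nabla u|\,|D^2u|\diff x
\leq \frac{C_0}{2}\int_\Omega |D^2u|^2\diff x + \frac{\|a_2\|_{\infty}^2}{2C_0}\int_\Omega |\nabla u|^2\diff x.
$$
Combining and absorbing the $|D^2u|^2$ contribution to the left-hand side then yields \eqref{eq:H2-est} with $C=C(C_0,\|a_2\|_{\infty},d)$.

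\textbf{Main obstacle.} The technical subtlety is that by Assumption \eqref{ass:A}, $A(x,\cdot)$ is only $C^2$ away from $p=0$, so $A_{pp}(x,p)$ is bounded (being zero-homogeneous) but possibly discontinuous at $p=0$. To justify both the chain rule and the two integrations by parts rigorously for a generic $u\in H^2(\Omega)$, I would regularize by replacing $A$ with $A_\delta(x,p):=A(x,p)+\tfrac{\delta}{2}|p|^2$, which is globally $C^2$ and uniformly strongly convex with constant $C_0+\delta$, carry out the estimate for $A_\delta$, and pass to the limit $\delta\to 0$. The contribution of the set $\{\nabla u=0\}$ in the ellipticity bound is harmless because Stampacchia's lemma applied componentwise to $\nabla u\in H^1(\Omega)^d$ gives $D^2u=0$ almost everywhere on this set, so the pointwise lower bound on $A_{pp}$ is only needed on $\{\nabla u\neq 0\}$.
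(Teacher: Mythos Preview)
Your argument is essentially the same as the paper's: two integrations by parts on the torus, followed by the pointwise ellipticity $\xi^\top A_{pp}(x,p)\xi\geq C_0|\xi|^2$ from strong monotonicity, and Young's inequality on the $A_{p,x}$ cross term. The paper declares the computation formal and refers to a difference-quotient justification in place of your regularization.

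One small correction: your proposed regularization $A_\delta(x,p)=A(x,p)+\tfrac{\delta}{2}|p|^2$ does \emph{not} make $A_\delta$ globally $C^2$; adding a smooth term cannot repair the possible lack of second derivatives of $A$ at $p=0$. Fortunately your second observation is the one that actually does the job: since $A_{pp}$ is $0$-homogeneous and hence bounded, $A_p$ is globally Lipschitz in $p$, so $A_p(x,\nabla u)\in H^1(\Omega)$ and the chain rule holds a.e.\ on $\{\nabla u\neq 0\}$, while on $\{\nabla u=0\}$ Stampacchia gives $D^2u=0$ a.e., rendering both sides of the identity equal there. So drop the $A_\delta$ step and rely directly on the Lipschitz/Stampacchia argument (or, as the paper does, on difference quotients).
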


\begin{proof}
The following computations are formal as $A$ is not twice differentiable in $p$ and $u$ may not have a third order derivative. However they can be made rigorous by approximating derivatives with difference quotients as in \cite{Patrik}. We denote by $\p_{i}$ the partial derivative $\partial_{x_i}$, and we adopt Einstein's summation convention for repeated indices.
\begin{align*}
\int_{\Omega} \Div(A_{p}(x,\nabla u))\Delta u \diff x&= \int_{\Omega}\p_{i}(A_{p_i}(x,\nabla u))\p_{jj}u\diff x\\
&= -\int_{\Omega}A_{p_i}(x,\nabla u) \p_{ijj}u\diff x\\
&= \int_{\Omega}A_{p_i, p_k}(x,\nabla u)\p_{kj}u\p_{ij}u \diff x+ \int_{\Omega}A_{p_{i}x_j}(x,\nabla u)\p_{ij}u \diff x.
\end{align*}
By strong convexity assumption, the first term is bounded from below by $C_{0}\sum_{j=1}^d\int_{\Omega} |\nabla \p_j u|^2$. Concerning the second term of the right-hand side, we use the assumptions on $A_{p,x}$ and we obtain that it is bounded in absolute value by 
$$
\left|\int_{\Omega}A_{p_{i}x_j}(x,\nabla u)\p_{ij}u \diff x\right| \le C\int_{\Omega} |D^2 u||\nabla u|\diff x \le \f{C_{0}}{2}\int_{\Omega}|D^2 u|^2\diff x + C \int_{\Omega}|\nabla u|^2 \diff x.
$$
for some new $C>0$. This concludes the proof of the lemma. 
\end{proof}

\begin{proof}[Proof of Proposition~\ref{prop3.4}]
Inequality \eqref{ineq1} is a consequence of the flow interchange lemma and the optimality condition. Indeed, the optimality condition, that is Equation~\eqref{eq:optimality_condition} yields $$
\f{1}{\tau}u_{\tau}^{n+1}\nabla\varphi = u_{\tau}^{n+1}\nabla\Div(A_{p}(x,\nabla u_\tau^{n+1} )) - u_{\tau}^{n+1}(F_{uu}(x,u_\tau^{n+1})\nabla u_{\tau}^{n+1} + F_{ux}(x,u_{\tau}^{n+1}))
$$ almost everywhere in $\Omega$. We apply the flow interchange lemma, i.e.,  Lemma~\ref{lem:flow_interchange_lemma}, with $\rho=u_{\tau}^{n+1}$ and $\eta = u_{\tau}^{n}$ and with the functional $F:u\mapsto \int_{\Omega} u \log u$. This functional is known to satisfy the assumptions of geodesic convexity~\cite{santambrogio2015optimal}. Combining it with the previous equation we obtain
\begin{align*}
&\int_{\Omega}u_{\tau}^{n+1} \log u_{\tau}^{n+1} \diff x+ \tau \int_{\Omega}\Div(A_{p}(x,\nabla u_{\tau}^{n+1}))\Delta u_{\tau}^{n+1} \diff x+ \tau\int_{\Omega}F_{uu}(x,u_{\tau}^{n+1})|\nabla u_{\tau}^{n+1}|^2\diff x \\ &+ \tau\int_{\Omega}F_{ux}(x,u_{\tau}^{n+1})\cdot\nabla u_{\tau}^{n+1}\diff x \le \int_{\Omega}u_{\tau}^{n}\log u_{\tau}^{n}\diff x.
\end{align*}

We now prove the uniform estimates. First we need to control the term
$$
\int_{\Omega}F_{uu}(x,u_{\tau}^{n+1}) |\nabla u_{\tau}^{n+1}|^2\diff x. 
$$
We aim to prove that there exists a constant \( C > 0 \) such that:
$$
\int_{\Omega} F_{uu}(x, u_{\tau}^{n+1}) |\nabla u_{\tau}^{n+1}|^2 \, \diff x \geq -C \int_{\Omega} |\nabla u_{\tau}^{n+1}|^2 \, \diff x.
$$
The first and second derivatives of \( F \) with respect to \( u \) are:
\begin{align*}
F_u &= \frac{\partial F}{\partial u} = 2K(x)u(1 - u)(1 - 2u), \\
F_{uu} &= \frac{\partial^2 F}{\partial u^2} = 2K(x)\left(1 - 6u + 6u^2\right).
\end{align*}

The quadratic term \( 6u^2 - 6u + 1 \) has its minimum at \( u = \frac{1}{2} \):
$$
\min_{u \in \mathbb{R}} \left(6u^2 - 6u + 1\right) = -\frac{1}{2}.
$$
Thus:
$$
 F_{uu} \geq 2K(x)\left(-\frac{1}{2}\right) = -K(x).
$$

Since \( \Omega \) is a compact flat torus and \( K(x) \) is continuous, \( K(x) \) attains its maximum \( K^{**} = \max_{x \in \Omega} K(x) \). Therefore:
$$
F_{uu}(x, u) \geq -K(x) \geq -K^{**}.
$$

We integrate and use the $H^1$ bound from Proposition~\ref{prop:scheme_wellposedness} to deduce
$$
\int_{\Omega} F_{uu}(x,u_{\tau}^{n+1}) |\nabla u_{\tau}^{n+1}|^2 \, \diff x \geq -K^{**} \int_{\Omega} |\nabla u_{\tau}^{n+1}|^2 \, \diff x \ge - K^{**}C.
$$

Then we need to control the term 
$$
\int_{\Omega}F_{ux}(x,u_{\tau}^{n+1})\cdot \nabla u_{\tau}^{n+1}\diff x . 
$$
First observe that 
\begin{align*}
F_{ux}(x,u)\cdot \nabla u&=\nabla K(x)\cdot \nabla W(u),
\end{align*}
where we recall $W(u) =u^2(1-u)^2$, and thus $\nabla W(u)=2(u(1-u)^2+u^2(u-1))\nabla u$. Using that $K\in C^1(\om)$, and thus $\sup_{x\in \om}\abs{\nabla K(x)}\leq C$, for some $C>0$, we can make the following estimate, by means of the Sobolev embedding $H^2(\om) \hookrightarrow W^{1,4}(\om)$ in dimensions 2 and 3,
\begin{align*}
& \left|\tau\int_{\Omega}F_{ux}(x,u_{\tau}^{n+1})\cdot \nabla u_{\tau}^{n+1} 
 \diff x\right|\\&\leq C\tau\sup_{x\in \om} \abs{\nabla K(x)}\int_\om(1+\abs{u_\tau^{n+1}}^3)\abs{\nabla u_{\tau}^{n+1}}\\&
 \leq C\tau(1+\norm{\nabla u_\tau^{n+1}}^2+\norm{u_\tau^{n+1}}_{L^4}^3\norm{\nabla u_\tau^{n+1}}_{L^4}) \\&
 \leq C\tau(1+\norm{u_\tau^{n+1}}_{H^2})\\&\leq C(\omega)\tau+\tau\omega\norm{D^2u_\tau^{n+1}}^2,
\end{align*}
for some $\omega>0$ to be chosen later on, where we used the estimates from Proposition~\ref{prop:scheme_wellposedness}. 



Now we turn our attention the diffusion term 
$$
\int_{\Omega}\Div(A_{p}(x,\nabla u_{\tau}^{n+1}))\Delta u_{\tau}^{n+1},
$$
but Lemma~\ref{lem:H^2_estimate} and Proposition~\ref{prop:scheme_wellposedness} yield 
$$
C_P\int_{\Omega} |D^2u_{\tau}^{n+1}|^2\diff x - C\tau\le 
\int_{\Omega}\Div(A_{p}(x,\nabla u_{\tau}^{n+1}))\Delta u_{\tau}^{n+1}\diff x,
$$
for some $C_P>0$.

In the end, plugging all the estimates above in \eqref{ineq1} and choosing $\omega=\frac{C_P}{2}>0$, there exists a constant $C>0$ such that

\begin{align*}
\int_{\Omega}u_{\tau}^{n+1} \log u_{\tau}^{n+1} + C\tau \int_{\Omega}|D^2 u_{\tau}^{n+1}|^2  \le \int_{\Omega}u_{\tau}^{n}\log u_{\tau}^{n}+ C\tau.\\
\end{align*}

We deduce the result by induction, observing that $n\tau\le T$, concluding the proof of the proposition.
\end{proof}

Before proving that the interpolation curve converges strongly, we recall here from \cite[Theorem 2.1]{MR3761096} a version of Aubin--Lions lemma useful for establishing compactness of a sequence of solutions to JKO scheme. For the proof we refer to \cite[Theorem 2]{MR2005609}.
\begin{thm}\label{thm:general_lions_aubin}
Let $(X, \|\cdot\|_{X})$ be a Banach space. We consider 
\begin{itemize}
    \item a lower semi-continuous functional $\mathcal{F}:X \to [0,+\infty]$ with relatively compact sublevels in $X$,
    \item a pseudo-distance $g:X \times X \to [0, +\infty]$, that is $g$ is lower semicontinuous and $g(\rho,\eta)=0$ for some $\rho, \eta \in X$ such that $\mathcal{F}(\rho), \mathcal{F}(\eta)<\infty$ implies $\rho = \eta$.
\end{itemize}
Let $U$ be a set of measurable functions $u: (0,T) \times X$ with $T>0$ fixed. Assume further that
\begin{equation}\label{eq:gen_lions_aubin_ass}
\sup_{u \in U} \int_0^T \mathcal{F}(u(t))\diff t < \infty, \qquad \lim_{h\to 0} \sup_{u \in U} \int_0^T g(u(t+h), u(t)) \diff t  = 0.
\end{equation}
Then, $U$ contains a sequence $\{u_n\}$ converging in measure to some $u \in X$, i.e.
$$
\left| \{t \in [0,T]:  \|u_n - u \|_{X} > \eps \} \right| \to 0 \mbox{ as } n\to \infty,\quad \forall{\eps>0}.
$$
In particular, there exists a subsequence (not relabelled) such that
$$
u_n(t) \to u(t) \mbox{ in } X \mbox{ for a.e. } t\in [0,T].
$$
\end{thm}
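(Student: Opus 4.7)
The strategy is to establish total boundedness of $U$ in the space $L^0(0,T;X)$ of measurable functions with the topology of convergence in measure, and then extract a Cauchy (hence convergent) subsequence; this is the classical Rossi--Savar\'e approach to generalized Aubin--Lions. The two hypotheses play complementary roles: the integrability of $\mathcal F\circ u$ will give uniform compactness in $X$ at most times, while the $g$-equicontinuity (understood as $\sup_{u\in U}\int_0^{T-h} g(u(t+h),u(t))\,\diff t \to 0$) gives coherence of the values across time.

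First, I would combine the bound $\sup_{u\in U}\int_0^T \mathcal F(u(t))\,\diff t<\infty$ with Chebyshev's inequality to obtain, for every $\delta>0$, a level $M=M(\delta)$ such that $\sup_{u\in U}\left|\{t\in[0,T]:\mathcal F(u(t))>M\}\right|<\delta$. Since the sublevels of $\mathcal F$ are relatively compact in $X$, this means that outside an exceptional set of times of measure at most $\delta$, every value $u(t)$ (uniformly in $u\in U$) lies in the common precompact set $K_M:=\{\mathcal F\le M\}\subset X$. Similarly, the $g$-equicontinuity yields, for every $\sigma>0$, some $h_0>0$ such that $\sup_{u\in U}\int_0^{T-h}g(u(t+h),u(t))\,\diff t<\sigma$ whenever $h<h_0$; a Chebyshev-type estimate then forces $g(u(t+h),u(t))$ to be small off of a set of small measure.

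The heart of the argument is to combine these two facts to construct, for each $\eps>0$, a finite $\eps$-net for $U$ in the metric of convergence in measure. Partition $[0,T]$ into intervals of length less than $h_0$ and, on each interval, replace $u$ by a representative value taken from a finite cover of $K_M$ by $\eps$-balls in $X$; there are only finitely many such symbolic discretizations. One must then check that each $u\in U$ is indeed $\eps$-close in measure to its discretization, which is the delicate point, because this is where $g$-equicontinuity must be converted into $X$-closeness. Once total boundedness is established, one extracts a Cauchy subsequence in the metric of convergence in measure, and a further diagonal refinement provides the a.e.\ convergence of the statement, identifying a limit $u$ taking values in $X$.

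The main obstacle will be that step of transferring temporal $g$-coherence into closeness of the $u(t)$-values in the norm of $X$, because $g$ is only a pseudo-distance and need not control $\|\cdot\|_X$ directly. This will be resolved by using simultaneously (i) the relative compactness of the sublevels of $\mathcal F$, which reduces the matter to a compact metric set in $X$ where $g$-closeness can, after extracting subsequences, be combined with the lower semicontinuity of $g$ and the separation property $g(\rho,\eta)=0\Rightarrow\rho=\eta$ (valid when $\mathcal F(\rho),\mathcal F(\eta)<\infty$) to exclude two distinct accumulation points at close times, and (ii) the lower semicontinuity of $g$ to pass to the limit along the subsequences obtained. The separation property is essential to guarantee uniqueness of the pointwise limit, and hence to upgrade abstract total boundedness into the existence of a genuinely converging sequence in $U$.
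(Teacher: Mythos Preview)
The paper does not give its own proof of this theorem: it simply recalls the statement and refers the reader to \cite{MR2005609} (Rossi--Savar\'e) for the proof. Your proposal is precisely a sketch of the Rossi--Savar\'e argument the paper cites, including the key step of upgrading $g$-closeness to $X$-closeness on compact sublevels via lower semicontinuity of $g$ and the separation property, so your approach is aligned with the intended reference.
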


\begin{proposition}\label{prop:convergence_interpolation_curve}
Let $u_{\tau}$ be the constant interpolation curve. Then, up to a subsequence (not relabeled), there exists $u\in L^{\infty}(0,T;H^{1}(\Omega))\cap L^{\infty}(0,T; L^{4}(\Omega))\cap L^{2}(0,T; H^{2}(\Omega))$ such that
\begin{align*}
&u_{\tau}\rightharpoonup u\quad \text{weakly in $L^{\infty}(0,T; H^{1}(\Omega))\cap L^{\infty}(0,T; L^{4}(\Omega))\cap L^{2}(0,T; H^{2}(\Omega))$},\\
&u_{\tau}\to u\quad \text{in $L^{p}(0,T; L^{q}(\Omega))$ for all $1\le p<+\infty$ and $1\le q<6$},\\
&\nabla u_{\tau}\to \nabla u \text{ a.e. in }(0,T)\times\Omega\text{ and in } L^{2}(0,T; L^{2}(\Omega)),\\
&A_p(x,\nabla u_{\tau})\to A_p(x,\nabla u) \text{ a.e. in }(0,T)\times\Omega\text{ and in }L^{2}(0,T; L^{2}(\Omega)),\\
& A(x,\nabla u_\tau)\to A(x,\nabla u)\text{ a.e. in }(0,T)\times\Omega\text{ and in }L^{1}(0,T; L^{1}(\Omega)),\\
& A_{x}(x,\nabla u_\tau)\to A_{x}(x,\nabla u) \text{ a.e. in }(0,T)\times\Omega\text{ and in }L^{1}(0,T; L^{1}(\Omega)),\\
& F_{u}(x,u_\tau)\to F_u(x,u) \text{ a.e. in }(0,T)\times\Omega\text{ and in }L^{p}(0,T; L^{q}(\Omega)),\ \forall\text{$1\le p<+\infty$, $1\le q<2$},\\
& F(x,u_\tau)\to F(x,u) \text{ a.e. in }(0,T)\times\Omega\text{ and in }L^{p}(0,T; L^{q}(\Omega)),\ \forall 1\le p<+\infty,\ 1\le q<3/2.
\end{align*}
\end{proposition}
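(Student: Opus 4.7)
The plan is to string together the uniform bounds from Propositions \ref{prop:scheme_wellposedness} and \ref{prop3.4} (Banach--Alaoglu) for the weak part, the generalized Aubin--Lions lemma (Theorem \ref{thm:general_lions_aubin}) for the strong $L^p_t L^q_x$ convergence and the a.e.\ convergence of gradients, and Vitali's theorem to pass to the limit in the nonlinearities via the growth conditions in \eqref{ass:A}--\eqref{ass:double_well}.

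First, the weak convergences in $L^{\infty}(0,T;H^{1}(\Omega))\cap L^{\infty}(0,T;L^{4}(\Omega))\cap L^{2}(0,T;H^{2}(\Omega))$ follow by Banach--Alaoglu from the uniform bounds already established: the $L^\infty_t H^1_x$ and $L^\infty_t L^4_x$ bounds come from Proposition \ref{prop:scheme_wellposedness}, while the $L^2_t H^2_x$ bound comes from Proposition \ref{prop3.4} combined with Poincar\'e's inequality on the torus (the mean value $\int_\Omega u_\tau^{n+1}\diff x = 1$ is conserved by the mass preserving nature of the scheme).

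Second, to obtain strong convergence $u_\tau \to u$ in $L^p(0,T;L^q(\Omega))$, I apply Theorem \ref{thm:general_lions_aubin} in the Banach space $X = L^q(\Omega)$ (for $1\le q < 6$) with tightness functional $\mathcal{F}(u) := \|u\|_{H^1(\Omega)}^2 + \|u\|_{L^4(\Omega)}^4$, whose sublevels are relatively compact in $L^q(\Omega)$ by Rellich--Kondrachov, and with pseudo-distance $g := W_2$ (which vanishes only when two probability measures coincide). The first condition in \eqref{eq:gen_lions_aubin_ass} holds by the $L^\infty_t$ bounds above, while the second follows from the Wasserstein equicontinuity estimate \eqref{equicontinuity_wasserstein} uniformly in $\tau$. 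This yields a.e.\ convergence $u_\tau(t)\to u(t)$ in $L^q(\Omega)$, and dominated convergence together with the uniform $L^\infty_t L^4_x$ bound then upgrades this to the claimed $L^p(0,T;L^q(\Omega))$ strong convergence for all $1\le p<\infty$, $1\le q<6$.

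Third, to obtain the a.e.\ convergence $\nabla u_\tau \to \nabla u$ on $(0,T)\times\Omega$, I re-apply Theorem \ref{thm:general_lions_aubin} with the stronger tightness functional $\widetilde{\mathcal{F}}(u) := \|u\|_{H^2(\Omega)}^2$, which has sublevels relatively compact in $X = H^1(\Omega)$, and again $g=W_2$. The $L^2_t H^2_x$ bound from Proposition \ref{prop3.4} gives $\int_0^T \widetilde{\mathcal{F}}(u_\tau(t))\diff t \le C$, and the time-regularity condition is again furnished by \eqref{equicontinuity_wasserstein}. This produces a subsequence with $u_\tau(t) \to u(t)$ in $H^1(\Omega)$ for a.e.\ $t$, whence $\nabla u_\tau \to \nabla u$ strongly in $L^2(0,T;L^2(\Omega))$ by dominated convergence using the $L^\infty_t H^1_x$ bound, and a.e.\ on $(0,T)\times\Omega$ up to a further subsequence.

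Finally, the convergences of $A(x,\nabla u_\tau)$, $A_p(x,\nabla u_\tau)$, $A_x(x,\nabla u_\tau)$, $F(x,u_\tau)$ and $F_u(x,u_\tau)$ are obtained by applying Vitali's convergence theorem: continuity of $A,A_p,A_x,F,F_u$ together with the a.e.\ convergences just established gives a.e.\ convergence of each composition, while the growth bounds from Assumption \eqref{ass:A} (namely $|A(x,p)|\le A_1|p|^2$, $|A_p(x,p)|\le a_1|p|$, $|A_x(x,p)|\le C|p|^2$ after differentiating the two-homogeneity identity) and the polynomial structure $F(x,u)=K(x)u^2(1-u)^2$ provide the required equi-integrability via the uniform $L^\infty_t H^1_x$ and $L^\infty_t L^4_x$ bounds (equivalently, higher $L^r$ bounds one slot above the target exponents). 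The exponents stated in the proposition are exactly those compatible with these bounds: $A_p$ in $L^2_t L^2_x$ follows from the $L^\infty_t H^1_x$ bound and $|A_p|\le a_1|\nabla u|$, $F_u$ in $L^p_t L^q_x$ for $q<2$ from the $L^4$ bound on $u_\tau$ (since $F_u$ is cubic in $u$), and so on.

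The main obstacle is the a.e.\ convergence of the gradients; weak convergence alone would not be enough to handle $A_p(x,\nabla u)$ and $\nabla u\otimes A_p(x,\nabla u)$ in the weak formulation \eqref{weakA}. The generalized Aubin--Lions argument circumvents this by exploiting the $H^2$ bound produced in Proposition \ref{prop3.4} via the entropy dissipation / flow interchange, combined with the Wasserstein equicontinuity of the piecewise constant interpolation — this is precisely why the entropy estimate was needed in the first place.
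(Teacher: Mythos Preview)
Your proof is correct and follows essentially the same approach as the paper: uniform bounds from Propositions~\ref{prop:scheme_wellposedness} and~\ref{prop3.4} for the weak part, the generalized Aubin--Lions lemma (Theorem~\ref{thm:general_lions_aubin}) with the Wasserstein pseudo-distance and the $H^2$ tightness functional for strong $H^1$ compactness, and then generalized dominated convergence (equivalently Vitali) for the nonlinearities via the growth conditions. The only cosmetic difference is that the paper applies Theorem~\ref{thm:general_lions_aubin} once, directly with $X=H^1(\Omega)$ and $\mathcal{F}(u)=\|u\|_{H^2}^2$ (set to $+\infty$ off $H^2\cap\mathcal{P}(\Omega)$), and then reads off the $L^p_tL^q_x$ convergence from the resulting $L^p_tH^1_x$ convergence; your separate first pass with $X=L^q(\Omega)$ is therefore redundant but harmless.
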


\begin{proof}
Propositions~\ref{prop:scheme_wellposedness} and \ref{prop3.4} show that the interpolation curve $u_{\tau}$ is bounded uniformly in $L^{\infty}(0,T; H^{1}(\Omega))\cap L^{\infty}(0,T; L^{4}(\Omega))\cap L^{2}(0,T; H^{2}(\Omega))$. This implies the weak convergence in the first line of the statement of the proposition above. We prove the strong compactness in $L^{2}(0,T;H^{1}(\Omega))$. First, we want to apply Theorem \ref{thm:general_lions_aubin} with the Banach space $X = H^1(\Omega)$, set $U = \{ u_{\tau}\}_{\tau > 0}$, pseudometric 
$
g(u_1,u_2)= W_2^2(u_1,u_2)$ (extended to $+\infty$ in case $u_1$ or $u_2$ are not probability measures) and functional $\mathcal{F}$ defined as 
$$
\mathcal{F}(u) = \begin{cases}
\|u\|^2_{H^2(\Omega)} &\mbox{ if } u \in H^2(\Omega) \cap \mathcal{P}(\Omega), \\
+ \infty &\mbox{ if } u \notin H^2(\Omega) \cap \mathcal{P}(\Omega).
\end{cases}
$$
$\mathcal{F}$ is lower semi-continuous and its level sets are compact in $H^1(\Omega)$ by the Rellich-Konchadrov theorem. Furthermore, $g$ is lower semi-continuous. Finally, \eqref{eq:gen_lions_aubin_ass} follows from the uniform estimates in $L^2(0,T; H^2(\Omega))$ and estimate \eqref{equicontinuity_wasserstein}.  \\

\noindent Therefore, Theorem~\ref{thm:general_lions_aubin} gives us a subsequence (not relabelled) such that
$$
\| u_\tau(t,\cdot) - u(t,\cdot) \|_{H^1(\Omega)}^2 \to 0 \mbox{ for a.e. } t\in [0,T].
$$
As the sequence $\{u_\tau\}_{\tau}$ is bounded in $L^{\infty}(0,T; H^1(\Omega))$, using the Lebesgue dominated convergence theorem we have proved that $u_{\tau}\to u$ strongly in $L^{p}(0,T; H^{1}(\Omega))$ for all $1\le p<+\infty$. Since by Assumption~\eqref{ass:A}
$$
|A_{p}(x,\nabla u)|\lesssim|\nabla u|, 
$$
the generalized Lebesgue dominated convergence theorem implies that $A_{p}(x,\nabla u_{\tau})\to A_{p}(x,\nabla u)$ in $L^{p}(0,T; L^{2}(\Omega))$ for all $1\le p<+\infty$. The other convergences follow in a similar manner, by interpolation and application of the generalized Lebesgue dominated convergence theorem.
\end{proof}

Before concluding that our solution is a weak solution to the desired PDE, we prove  weak compactness on the flux, which allows to provide a better weak formulation. 

\begin{proposition}[Weak compactness for the flux]\label{prop:flux_JKO}
Let $J_{\tau}^{n+1} = u_{\tau}^{n+1}\nabla\mu_{\tau}^{n+1}$ where $$
\mu_{\tau}^{n+1} = - \Div(A_{p}(x,\nabla u_\tau^{n+1} ))+ F_u(x,u_\tau^{n+1}).$$
Let $J_{\tau}$ be the constant interpolation curve of $J_{\tau}^{n+1}$. Then, up to a (not relabled) subsequence, $J_{\tau}\rightharpoonup j$ weakly in $L^{2}(0,T; L^{8/5}(\Omega))$.
\end{proposition}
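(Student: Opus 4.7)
The plan is to first identify $J_\tau^{n+1}$ with the displacement provided by the JKO scheme, then derive the key $L^2$-in-time-$L^2$-in-space bound for $|J_\tau|/\sqrt{u_\tau}$, and finally upgrade this via Hölder to the desired $L^2(0,T;L^{8/5}(\Omega))$ control. Weak compactness is then a direct consequence of Banach-Alaoglu.

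\textbf{Step 1 (Identification of the flux).} From the Euler–Lagrange condition \eqref{eq:optimality_condition} for the minimizer $u_\tau^{n+1}$, we have
\begin{equation*}
u_\tau^{n+1}\nabla\mu_\tau^{n+1} = -\frac{1}{\tau}\, u_\tau^{n+1}\nabla\varphi_\tau^{n+1},
\end{equation*}
where $\varphi_\tau^{n+1}$ is the Kantorovich potential for the optimal transport from $u_\tau^{n+1}$ to $u_\tau^n$. Hence, pointwise, $J_\tau^{n+1} = -\tau^{-1}u_\tau^{n+1}\nabla\varphi_\tau^{n+1}$, which, in particular, vanishes wherever $u_\tau^{n+1}$ vanishes and thus makes the quotient $|J_\tau^{n+1}|^2/u_\tau^{n+1}$ well defined (with value $0$ on $\{u_\tau^{n+1}=0\}$).

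\textbf{Step 2 (Energy bound on $|J_\tau|^2/u_\tau$).} Summing the scheme inequality $\tilde E(u_\tau^{n+1}) + (2\tau)^{-1} W_2^2(u_\tau^{n+1},u_\tau^n)\leq \tilde E(u_\tau^n)$ telescopically as in Proposition~\ref{prop:scheme_wellposedness}, and invoking Brenier's identity $W_2^2(u_\tau^{n+1},u_\tau^n) = \int_\Omega u_\tau^{n+1}|\nabla\varphi_\tau^{n+1}|^2\diff x$, we get
\begin{equation*}
\sum_{n} \tau \int_\Omega \frac{|J_\tau^{n+1}|^2}{u_\tau^{n+1}}\diff x = \sum_n \frac{1}{\tau}\int_\Omega u_\tau^{n+1}|\nabla\varphi_\tau^{n+1}|^2\diff x \le 2\,E[u_0],
\end{equation*}
so that the interpolant satisfies $\int_0^T\int_\Omega |J_\tau|^2/u_\tau \diff x\diff t \le 2 E[u_0]$.

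\textbf{Step 3 (From $|J_\tau|^2/u_\tau$ to $L^2_t L^{8/5}_x$).} The idea is to split $|J_\tau|^{8/5} = (|J_\tau|^2/u_\tau)^{4/5}\, u_\tau^{4/5}$ and apply Hölder with exponents $5/4$ and $5$:
\begin{equation*}
\int_\Omega |J_\tau|^{8/5}\diff x \le \left(\int_\Omega \frac{|J_\tau|^2}{u_\tau}\diff x\right)^{4/5}\left(\int_\Omega u_\tau^{4}\diff x\right)^{1/5}.
\end{equation*}
Raising to the power $5/4$ and integrating in time, then using the $L^\infty(0,T;L^4(\Omega))$ bound from Proposition~\ref{prop:scheme_wellposedness} together with Step 2, yields
\begin{equation*}
\int_0^T\|J_\tau\|_{L^{8/5}(\Omega)}^2\diff t \le \|u_\tau\|_{L^\infty(0,T;L^4(\Omega))}\int_0^T\!\!\int_\Omega \frac{|J_\tau|^2}{u_\tau}\diff x\diff t \le C,
\end{equation*}
uniformly in $\tau$. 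Reflexivity of $L^2(0,T;L^{8/5}(\Omega))$ and Banach-Alaoglu then furnish a (not relabelled) subsequence with $J_\tau\rightharpoonup j$ in that space.

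\textbf{Main obstacle.} The only delicate point is the identification in Step 1, where one must carry out the differentiation of the Euler-Lagrange equation and legitimize the appearance of the Kantorovich potential as the displacement; here one needs the Lipschitz regularity of $\varphi$ on the torus (Brenier's theorem) and the pushforward interpretation $T(x)=x-\nabla\varphi(x)$. Everything else is a matter of combining the $L^\infty_t L^4_x$ bound of Proposition~\ref{prop:scheme_wellposedness} with the JKO energy dissipation through a suitable Hölder exponent choice.
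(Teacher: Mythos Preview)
Your proof is correct and follows essentially the same route as the paper: identify $J_\tau^{n+1}=-\tau^{-1}u_\tau^{n+1}\nabla\varphi_\tau^{n+1}$ via the optimality condition, combine the telescopic energy estimate $\sum_n \tau^{-1}W_2^2(u_\tau^{n+1},u_\tau^n)\le 2E[u_0]$ with the H\"older splitting $|J_\tau|^{8/5}=(|J_\tau|^2/u_\tau)^{4/5}u_\tau^{4/5}$ and the uniform $L^\infty_t L^4_x$ bound on $u_\tau$, and conclude by weak compactness in the reflexive space $L^2(0,T;L^{8/5}(\Omega))$. Your write-up of the H\"older step is in fact cleaner than the paper's (which contains a minor typo in the exponents), and you correctly note that the convergence is weak.
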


\begin{proof}
The optimality condition Equation~\eqref{eq:optimality_condition} yields
$$
J_{\tau}^{n+1}=-u_{\tau}^{n+1}\f{\nabla\varphi}{\tau}.
$$
We make the observation that the Wasserstein distance between $u_{\tau}^{n+1}$ and $u_{\tau}^{n}$ can be written in terms of $J_{\tau}^{n+1}$. More precisely
\begin{align*}
W_{2}^{2}(u_{\tau}^{n}, u_{\tau}^{n+1}) = \int_{\Omega}u_{\tau}^{n+1}|x-T(x)|^2\diff x= \int_{\Omega}u_{\tau}^{n+1}|\nabla\varphi(x)|^2\diff x=\tau^2\int_{\Omega}\f{(J_{\tau}^{n+1})^2}{u_{\tau}^{n+1}}\diff x.
\end{align*}
Therefore 
$$
\left(\int_{\Omega}|J_{\tau}^{n+1}|^{8/5}\right)^{5/8}=\int_{\Omega}\left(\f{|J_{\tau}^{n+1}|}{\sqrt{u_{\tau}^{n+1}}}\right)^{8/5} \left|\sqrt{u_{\tau}^{n+1}}\right|^{8/5}\diff x \le \left(\int_{\Omega}\f{(J_{\tau}^{n+1})^2}{u_{\tau}^{n+1}}\diff x\right)^{1/2}\int_{\Omega}(u_{\tau}^{n+1})^4\diff x.
$$
We obtain 
$$
\|J_{\tau}^{n+1}\|_{L^{8/5}}\lesssim \f{W_{2}(u_{\tau}^{n},u_{\tau}^{n+1})}{\tau},
$$
and, using~\eqref{eq:telescopic_wasserstein} we obtain
$$
\|J_{\tau}\|^2_{L^{2}(0,T; L^{8/5}(\Omega))}\le \sum_{n=0}^{N}\tau \|J_{\tau}^{n+1}\|_{L^{8/5}}^2\le C.
$$
This yields the result.
\end{proof}

We can now conclude that we have found a weak solution of the equation. 

\begin{proposition}
 The function $u, j$ is a weak solution to the PDE as in Definition~\ref{def:weak_sol_degenerate}. 
\end{proposition}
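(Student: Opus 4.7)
The plan is to pass to the limit $\tau \to 0$ in the JKO scheme and verify the four conditions of Definition~\ref{def:weak_sol_degenerate}: regularity, the two weak identities, and the energy inequality. Most of the regularity follows from Propositions~\ref{prop:convergence_interpolation_curve} and~\ref{prop:flux_JKO} by weak-$\star$ lower semi-continuity of the norms in $L^\infty(0,T;H^1)$, $L^\infty(0,T;L^4)$, $L^2(0,T;H^2)$ and $L^2(0,T;L^{8/5})$. The integrability claims on $A(x,\nabla u)$, $A_p$, $A_x$, $F$, $F_u$ and $F_x$ then follow from the growth in Assumptions~\eqref{ass:A}--\eqref{ass:double_well} combined with the $L^\infty(0,T;H^1)\cap L^\infty(0,T;L^4)$ bound on $u$. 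The bound $\partial_t u \in L^2(0,T;(H^{8/5}(\Omega))')$ will fall out once the continuity equation $\partial_t u = \Div j$ is established, using $j \in L^2(0,T;L^{8/5})$ and the Sobolev embedding $H^{8/5}(\Omega)\hookrightarrow W^{1,8/5}(\Omega)$ in dimensions $d=2,3$.

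The first identity in~\eqref{weakA} is the standard continuity equation obtained from JKO. For $\zeta \in C^1_c([0,T)\times\Omega)$ with $\zeta^n := \zeta(n\tau,\cdot)$, I would Taylor-expand $\zeta^n$ along the optimal transport map from $u_\tau^n$ to $u_\tau^{n+1}$ to obtain
\begin{equation*}
\int_\Omega (u_\tau^{n+1}-u_\tau^n)\zeta^n\diff x = -\tau\int_\Omega J_\tau^{n+1}\cdot\nabla\zeta^n\diff x + R_n,\qquad |R_n| \le C\|D^2\zeta\|_\infty W_2^2(u_\tau^n,u_\tau^{n+1}).
\end{equation*}
Abel-summing in $n$, the left-hand side telescopes into a term approximating $\int u\,\partial_t\zeta + \int u_0\zeta(0,\cdot)$, while the total remainder is $O(\tau)$ by the telescopic Wasserstein bound~\eqref{eq:telescopic_wasserstein}. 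Strong $L^1$ convergence of $u_\tau \to u$ and weak convergence of $J_\tau \rightharpoonup j$ in $L^2(0,T;L^{8/5}(\Omega))$ then deliver the continuity equation in the limit.

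The flux identity in~\eqref{weakA} is the main point. At the discrete level, the optimality condition~\eqref{eq:optimality_condition} reads $J_\tau^{n+1} = -u_\tau^{n+1}\nabla\mu_\tau^{n+1}$ with $\mu_\tau^{n+1} = -\Div(A_p(x,\nabla u_\tau^{n+1})) + F_u(x,u_\tau^{n+1})$. Testing against $\xi\in C^2_c(\Omega;\R^d)$, I would rewrite $\int u_\tau^{n+1}\nabla\mu_\tau^{n+1}\cdot\xi = -\int \mu_\tau^{n+1}\Div(u_\tau^{n+1}\xi)$, split this into $\mu u\Div\xi$ and $\mu\nabla u\cdot\xi$ contributions, and integrate by parts each $\Div(A_p)$ factor against $u\xi$ or $\nabla u\cdot \xi$ respectively. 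Using the two-homogeneity identity $A_p(x,\nabla u)\cdot\nabla u = 2A(x,\nabla u)$ together with the decomposition $F_u\nabla u = \nabla_x F(x,u) - F_x(x,u)$ rearranges everything into the divergence-form structure of the flux identity in~\eqref{weakA}, with $(u_\tau^{n+1},J_\tau^{n+1})$ in place of $(u,j)$; this is exactly the discrete analogue of the formal identity~\eqref{flux1}. Integrating in time and passing to the limit then uses the weak convergence of $J_\tau$ for the left-hand side, and the strong convergences of $u_\tau$, $\nabla u_\tau$, $A(x,\nabla u_\tau)$, $A_p(x,\nabla u_\tau)$, $A_x(x,\nabla u_\tau)$, $F(x,u_\tau)$ and $F_u(x,u_\tau)$ from Proposition~\ref{prop:convergence_interpolation_curve} to handle each nonlinear term. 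The quadratic coupling $\nabla u\otimes A_p(x,\nabla u)$ is the term most sensitive to the mode of convergence, and strong $L^2$ convergence of $\nabla u_\tau$ is precisely what allows the passage to the limit there; securing this strong convergence was the whole purpose of the $H^2$ estimate in Proposition~\ref{prop3.4}, so this is also the conceptual hard step.

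The energy inequality is obtained by summing the per-step bound $E[u_\tau^{n+1}] + W_2^2(u_\tau^{n+1},u_\tau^n)/(2\tau) \le E[u_\tau^n]$ coming from the minimality of the scheme, and using the identity $W_2^2(u_\tau^{n+1},u_\tau^n)/\tau = \tau\int|J_\tau^{n+1}|^2/u_\tau^{n+1}\diff x$ already derived in the proof of Proposition~\ref{prop:flux_JKO}. Lower semi-continuity of $E$ along the established convergences of $u_\tau$, together with convexity and weak-strong lower semi-continuity of the Benamou--Brenier functional $(u,J) \mapsto \int |J|^2/u$, then yield the limiting bound $E[u(\cdot,T')] + \int_0^{T'}\int_\Omega |j|^2/u\diff x\diff t \le E[u_0]$ for a.e.~$T' \in [0,T]$, closing the proof.
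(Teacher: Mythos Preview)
Your proposal is correct and follows essentially the same approach as the paper's (very brief) proof, which simply refers to Santambrogio's standard JKO-to-PDE argument together with the key divergence-form identity $\Div(A_p(x,\nabla u))\nabla u = \Div(\nabla u\otimes A_p(x,\nabla u)) - \nabla A(x,\nabla u) + A_x(x,\nabla u)$ and the convergences already established in Proposition~\ref{prop:convergence_interpolation_curve}. The only slip is that the embedding needed for the $\partial_t u$ bound is $H^{8/5}(\Omega)\hookrightarrow W^{1,8/3}(\Omega)$ (the H\"older conjugate of $8/5$ being $8/3$), not $W^{1,8/5}$.
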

\begin{proof}
The proof can be adapted from~\cite[Section 8.3]{santambrogio2015optimal}; therefore, we do not repeat the computations. It uses the convergence results obtained in Proposition~\ref{prop:convergence_interpolation_curve}, as well as the property:
$$
\Div(A_p(x,\nabla u ))\nabla u = \Div( \nabla u\otimes A_p(x,\nabla u) ) - \nabla(A(x,\nabla u))+ A_x(x,\nabla u),$$
which holds for every $u$ sufficiently regular.
\end{proof}

\section{Proof of Theorem \ref{thm:sharp_interface}: Sharp interface limit}\label{sect:sharp_interface}

We study the limit $\eps\to 0$, that is Theorem~\ref{thm:sharp_interface}. We provide two proofs of the convergence to an anisotropic weighted Hele--Shaw flow. The first is based upon an anisotropic Reshetnyak continuity theorem, as used already in~\cite{CicaleseNagasePisante}, while the second uses anisotropic tilt excess estimates extending the method of~\cite{LauxUllrich,laux_kroemer}. 

\subsection{Step 1: Compactness}

We first prove that (up to a subsequence) $u_{\eps}$ converges strongly in suitable spaces, implying the compactness in \eqref{compactness}. Our proof adapts and extends the argument presented in \cite[Section~4.1]{laux_kroemer}, which in turn builds on ideas from Modica--Mortola \cite{Modica}.

\medskip

Set 
\begin{equation}
\label{eq:definition_psi}
    \psi \colon \mathbb{R} \to \mathbb{R}, 
    \quad 
    \psi(s) = 2 \int_{0}^{s} \sqrt{W(r)} \,\diff r.
\end{equation}
Then define the functional 
\begin{equation}
\label{eq:definition_F}
    \mathcal{F} \colon L^1(\Omega) \to [0, \infty],
    \quad
    \mathcal{F}(u) = 
    \begin{cases}
      \displaystyle
      \int_\Omega  \lvert \nabla (\psi \circ u)  \rvert \,\diff x,
      & \text{if } \psi \circ u \in BV(\Omega), \\
      +\infty,
      & \text{otherwise}.
    \end{cases}
\end{equation}
We show that $\mathcal{F}$ can be controlled by $E_{\eps}$, which then yields uniform bounds leading to compactness.

\medskip

Note that, by the chain rule,
$$
     \lvert \nabla(\psi \circ u)  \rvert
    = 2\,\sqrt{W (u )}\, \lvert \nabla u  \rvert.
$$
Recalling that $A_0 \lvert p\rvert^2 \le A(x,p) \le A_1 \lvert p\rvert^2$ for all $x \in \Omega$ and $p \in \mathbb{R}^n$, that $K(x) \ge K_*>0$, and using Young's inequality, it follows that 
\begin{align}
    \nonumber \lvert \nabla (\psi\circ u) \rvert
    = 2\sqrt{W(u)}\, \lvert \nabla u \rvert
    &\le  2\,\frac{\sqrt{K(x)}}{\sqrt{K_*}\sqrt{A_0}} \,\sqrt{W(u)}\, \sqrt{A(x,\nabla u)}\\&\le \frac{1}{\sqrt{A_0}\,\sqrt{K_*}}  \left(\,
      \eps\,A\! (x,\nabla u ) 
      +
      \frac{1}{\eps}\,K(x)\,W(u)
     \right).\label{eq:bound_nabla_psi_u}
\end{align}

Thus, integrating over $\Omega$ and recalling the definition of $E_\eps(u)$, we obtain
\begin{equation}
\label{eq:F_leq_Eeps}
    \mathcal{F}(u)
    \le \frac{1}{\sqrt{A_0}\,\sqrt{K_*}} E_{\eps}(u).
\end{equation}

\medskip

From \eqref{eq:F_leq_Eeps} it follows that
\begin{equation}
\label{eq:F_bound_general}
    \mathcal{F} (u_{\eps}(\cdot,t) ) \,
    \le \frac{1}{\sqrt{A_0}\,\sqrt{K_*}} E_{\eps} (u_{\eps}(\cdot,t) ) ,
\end{equation}
for almost any $t\in(0,T)$.
Since the energy estimate ensures that
\begin{align}
    E_{\eps} (u_{\eps}(\cdot,t) )
    +
    \int_{0}^{T}\int_{\Omega}\frac{\vert j_\eps\vert^2}{u_\eps}\,\diff x\,\diff t
    \le E (u_{\eps,0} ),
\label{energyestimate1}
\end{align}
for almost any $t\in(0,T)$ we deduce that
$$
    \sup_{\eps>0}
    E_{\eps} (u_{\eps}(\cdot,t) )
    \,
     \le  \,\sup_{\eps>0} E (u_{\eps,0} )
     \le  \,E_{0},
$$
for almost any $t\in(0,T)$, where $E_0$ is a uniform bound on the initial energies $E(u_{\eps,0})$.  Combining this with \eqref{eq:F_bound_general} yields
\begin{equation}
\label{eq:F_uniform_bound}
    \sup_{\eps>0} 
    \esssup_{t\in(0,T)} \mathcal{F} (u_{\eps}(\cdot,t) ) \
     \le  C\,E_0,
\end{equation}
for some constant $C>0$ independent of $\eps$.

\medskip

We now apply a compactness argument in the spirit of the Aubin--Lions lemma, useful for the Wasserstein metric setting (see Theorem~\ref{thm:general_lions_aubin}).  Define
$$
    X =  \left\{
      u \in L^{1}(\Omega)
       : 
      \|u\|_{L^{4}(\Omega)} \le C,  
      \|\psi \circ u\|_{BV(\Omega)} \le C,  
      \f{1}{|\Omega|}\int_{\Omega} u \,\diff x = 1
     \right\}
$$
for some $C>0$, and consider $g$ to be the $2$-Wasserstein distance on the space of probability densities.  The functional $\mathcal{F}$ in \eqref{eq:definition_F} has relatively compact sublevels in $X$: 

\begin{enumerate}
\item[(i)] If a sequence $\{u_k\}_k$ is bounded in $X$, then $\|\psi \circ u_k\|_{BV(\Omega)} \le C$ implies that $\{\psi \circ u_k\}_k$ is precompact in $L^1(\Omega)$ and thus convergent (up to a subsequence) almost everywhere.  
\item[(ii)] Since $\psi$ is invertible, this gives $u_k \to u$ pointwise a.e.\ for some limit $u$.  
\item[(iii)] The uniform $L^4$ bound of $\{u_k\}_k$ ensures equiintegrability, and Vitali's theorem then improves the convergence to strong $L^1(\Omega)$ convergence of $u_k$ to $u$.
\end{enumerate}

\medskip

To use Theorem~\ref{thm:general_lions_aubin}, we also need an equicontinuity in time with respect to the Wasserstein distance. From \eqref{equicontinuity_wasserstein} which passes to the limit when $\tau\to 0$ and lower semi-continuity of the squared Wasserstein distance (or by reproving this bound directly on the weak solutions of the equation), we have for all $\eps>0$
$$
    W_{2} (u_{\eps}(t),\,u_{\eps}(s) )
    \le C\,\sqrt{t-s},
    \qquad
    \forall\,0 \le s \le t \le T,
$$
where $C>0$ does not depend on $\eps$. 
By applying Theorem~\ref{thm:general_lions_aubin}, we deduce that there exists a subsequence $\eps_{l}$ such that
$$
    u_{\eps_{l}}(t)
    \to
    u_{\ast}(t)
    \quad\text{in}\quad
    L^{1}(\Omega)
    \quad
    \text{for a.e.}\ t\in[0,T].
$$
Owing to the additional uniform $L^{\infty}(0,T; L^{4}(\Omega))$-bounds, by Lebesgue dominated convergence theorem we deduce
$$
    u_{\eps_{l}}
    \to
    u_{\ast} 
    \quad
    \text{strongly in } L^p (0,T;\,L^q(\Omega))
    \quad
    \forall\,1 \le p < \infty,  1 \le q < 4.
$$
This completes the proof of the compactness statement in \eqref{compactness}.

With the strong convergence \eqref{compactness} at hand, we now aim at identifying the quantity $u_*$. The proof that $u_*(t,x)\in \{0,1\}$ for almost any $(t,x)\in [0,T]\times\Omega$ can immediately be deduced from the fact that, by Fatou's Lemma 
\begin{align*}
   & \int_\Omega W(u_*(\cdot,t))\diff t=\int_\Omega \liminf_{l\to \infty} W(u_{\eps_l}(\cdot,t))\diff x\\&\leq \frac{1}{K_*}\liminf_{l\to \infty}\int_\Omega K(x)W(u_{\eps_l}(t,x))\diff x\leq \liminf_{l \to \infty}\frac{\eps_l}{K_*} E_0=0, 
\end{align*}
entailing the result since $W$ vanishes only at $0,1$. Therefore, for almost any $(t,x)\in [0,T]\times\Omega$, we have $u(t,x)\in\{s\in \mathbb R:\ W(s)=0\}=\{0,1\}$. We now set $\Omega(t):=\{x\in \Omega:\ \lim_{\eps_l\to 0}u_{\eps_l}(t,x)=1\}$, and show that this set is of finite perimeter. Clearly, we will then have $u_{\eps_l}\to \chi_{\Omega(t)}$ in $L^1(0,T;L^1(\Omega))$. In order to show that $\Omega(t)$ is of finite perimeter, we have to operate as follows. First, by Fatou's Lemma, for any $\xi\in C^1_c(\Omega;\mathbb R^d)$ with $\vert \xi\vert\leq 1$, 
\begin{align*}
    &\int_\Omega c_0\chi_{\Omega(t)}\text{div}\xi\leq \liminf_{l\to \infty}\int_\Omega (\psi\circ u_{\eps_l})(t,x)\text{div}\xi\\&
    \leq \liminf_{l\to\infty}\int_\Omega \vert \nabla(\psi\circ u_{\eps_l})\vert(t,x)\diff x\leq \frac{C_2}{\sqrt {K_*}}\liminf_{l\to\infty}E_{\eps_l}(u_{\eps_l})<+\infty, 
\end{align*}
with $c_0=\psi(1)$ and where in the last step we operated as in \eqref{eq:bound_nabla_psi_u}, recalling the properties of $A$. By taking the supremum over the functions $\xi$ we indeed obtain by definition that $\mathcal P_\om(\Omega(t))<+\infty$ for almost any $t\in(0,T)$.

We now address the convergence of the flux \(j_{\eps_l}\) and show that it belongs to \(L^2 ((0,T)\times \Omega(t) )\).  
Recall that by definition and the energy estimate, we have
$$
    \int_{0}^{T} \int_{\Omega} \frac{\lvert j_{\eps}\rvert^2}{u_{\eps}} \,\diff x\,\diff t 
      \le   E (u_{\eps,0} ),
$$
uniformly in \(\eps\). 

\medskip

\noindent
We exploit the factorization
$$
    j_{\eps_l}
    =
    \frac{j_{\eps_l}}{\sqrt{u_{\eps_l}}}\,\sqrt{u_{\eps_l}}.
$$
Since 
\begin{align}
     \norm{ \f{j_{\eps_l}}{\sqrt{u_{\eps_l}}} }_{L^2((0,T)\times\Omega)}
      \le  
    C
\label{uniformbound}
\end{align}
by the above energy estimate, there exists (up to a subsequence, not relabeled) some $j_{\ast} \in L^2 ((0,T)\times\Omega )$ such that
$$
    \frac{j_{\eps_l}}{\sqrt{u_{\eps_l}}}
    \rightharpoonup
    j_{\ast}
    \quad
    \text{weakly in } L^2 ((0,T)\times\Omega ).
$$
On the other hand, we already know that \(u_{\eps_l}\to \chi_{\Omega(t)}\) strongly in \(L^1 ((0,T)\times\Omega )\), and hence 
\(\sqrt{u_{\eps_l}} \to \chi_{\Omega(t)}\) strongly in \(L^2 ((0,T)\times\Omega )\). Therefore, we may pass to the limit in the product:
$$
    j_{\eps_l}
    =
     \left(\frac{j_{\eps_l}}{\sqrt{u_{\eps_l}}} \right)\,\sqrt{u_{\eps_l}}
    \rightharpoonup
    j_{\ast}\,\chi_{\Omega(t)}
    \quad
    \text{weakly in } L^1 ((0,T)\times\Omega ).
$$
Define
$$
    j := j_{\ast}\,\chi_{\Omega(t)},
$$
which means that $\text{supp } j\subset \overline{\bigcup_{t\in[0,T]}\om(t)\times\{t\}}$.
Then we obtain
$$
    j_{\eps_l}
    \rightharpoonup
    j
    \quad
    \text{weakly in } L^1 ((0,T)\times\Omega ).
$$

\medskip

We claim that \(j\in L^2 ((0,T)\times \Omega )\). To see this, we apply a variant of Ioffe's theorem (see, e.g., \cite{Ambrosiofuscopallara}) that can be adapted to the space time settings. In a simplified form suitable for our problem, it states:

\begin{thm}[Ioffe]
\label{thm:ioffe}
    Let \(f:\,U\times \mathbb{R}^{m+k}\to [0,+\infty]\) be a normal function satisfying: for each fixed \(x \in U\) and \(s \in \mathbb{R}^m\), the map \(z \mapsto f(x,s,z)\) is convex in \(\mathbb{R}^k\).
    Assume that
    $$
        u_h \to u \quad \text{strongly in } L^1(U)^m,
        \quad
        v_h \rightharpoonup v \quad \text{weakly in } L^1(U)^k.
    $$
    Then
    $$
        \liminf_{h\to\infty}\int_{U} f (x,u_h,v_h )\,\diff x
        \ge\
        \int_{U} f(x,u,v)\,\diff x.
    $$
\end{thm}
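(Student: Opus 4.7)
The plan is to reduce the lower semicontinuity statement to a supremum of affine-in-$z$ functionals, and then exploit the asymmetric convergence hypotheses (strong for $u_h$, weak for $v_h$) on each affine piece. First I would recall that, since $f$ is normal and convex in the last variable, it admits a countable representation
\begin{equation*}
    f(x,s,z) = \sup_{n\in\N}  \bigl[a_n(x,s) + b_n(x,s)\cdot z\bigr],
\end{equation*}
where $a_n\colon U\times\R^m\to \R$ and $b_n\colon U\times\R^m\to \R^k$ are Borel, and (by a standard truncation argument) one may assume $\abs{a_n}+\abs{b_n}\le C_n$ pointwise. This reduction is classical (see, e.g., \cite[Proposition~5.26]{Ambrosiofuscopallara}) and rests on the fact that the epigraph of the convex l.s.c.\ map $z\mapsto f(x,s,z)$ is the intersection of a countable family of closed half-spaces.

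Second, I would fix $n$ and aim for
\begin{equation*}
    \liminf_{h\to\infty}\int_U  \bigl[a_n(x,u_h) + b_n(x,u_h)\cdot v_h \bigr]\diff x
    \ge
    \int_U  \bigl[a_n(x,u) + b_n(x,u)\cdot v \bigr]\diff x.
\end{equation*}
Since $u_h\to u$ strongly in $L^1(U)^m$, up to a subsequence $u_h\to u$ a.e. By the Scorza--Dragoni property of the Borel-normal functions $a_n$ and $b_n$ we get $a_n(\cdot,u_h)\to a_n(\cdot,u)$ and $b_n(\cdot,u_h)\to b_n(\cdot,u)$ a.e.\ (and thus in measure). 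Combined with the uniform bounds $\abs{a_n}+\abs{b_n}\le C_n$, this yields strong $L^p$-convergence for every $p<\infty$ by dominated convergence. The $a_n$-term then passes to the limit directly. The $b_n$-term is the product of a strongly convergent bounded sequence with a weakly $L^1$-convergent one; combining the strong $L^\infty\cap L^p$-convergence of $b_n(\cdot,u_h)$ with the weak $L^1$-convergence of $v_h$ (and using equi-integrability of $v_h$) gives
\begin{equation*}
    \int_U b_n(x,u_h)\cdot v_h  \diff x \longrightarrow \int_U b_n(x,u)\cdot v  \diff x.
\end{equation*}

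Finally, for each $N\in\N$ monotonicity gives $f(x,s,z)\ge \max_{n\le N}[a_n(x,s)+b_n(x,s)\cdot z]$, hence
\begin{equation*}
    \liminf_{h\to\infty}\int_U f(x,u_h,v_h)\diff x \ge \int_U \max_{n\le N} \bigl[a_n(x,u)+b_n(x,u)\cdot v \bigr]\diff x,
\end{equation*}
and letting $N\to\infty$ by monotone convergence yields $\int_U f(x,u,v)\diff x$ as a lower bound. The main technical obstacle is the passage to the limit in the mixed strong-weak product $\int b_n(x,u_h)\cdot v_h$: handling it cleanly requires either an Egorov-type argument to get uniform convergence of $b_n(\cdot,u_h)$ off a set of arbitrarily small measure, together with uniform integrability of the sequence $\{v_h\}$ (which is automatic from weak $L^1$-compactness via the Dunford--Pettis theorem), or, equivalently, an approximation of $b_n(\cdot,u)$ by continuous functions to replace the weak convergence by a standard duality pairing. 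Everything else is essentially bookkeeping: the cutoff $\abs{a_n}+\abs{b_n}\le C_n$ can always be imposed because $f\ge 0$ allows us to truncate the affine minorants from below at zero without losing the supremum representation.
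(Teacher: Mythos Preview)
The paper does not prove this theorem; it is quoted as a known variant of Ioffe's theorem with a reference to \cite{Ambrosiofuscopallara}. Your outline follows the standard strategy found there, but the final assembly step contains a genuine gap.

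You correctly establish, for each fixed $n$, the convergence
\[
\int_U \bigl[a_n(x,u_h)+b_n(x,u_h)\cdot v_h\bigr]\diff x \longrightarrow \int_U \bigl[a_n(x,u)+b_n(x,u)\cdot v\bigr]\diff x.
\]
However, the inference
\[
\liminf_{h\to\infty}\int_U f(x,u_h,v_h)\diff x \ge \int_U \max_{n\le N}\bigl[a_n(x,u)+b_n(x,u)\cdot v\bigr]\diff x
\]
does not follow from this. All you have is $\int f(x,u_h,v_h)\diff x \ge \int \max_{n\le N}[a_n(x,u_h)+b_n(x,u_h)\cdot v_h]\diff x$, and the pointwise maximum on the right is \emph{convex}, not affine, in $v_h$; your single-$n$ product argument does not apply to it, and proving weak lower semicontinuity of this max-integrand is essentially the original problem in miniature. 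The standard repair is a localization step: for any finite measurable partition $\{E_j\}_{j=1}^J$ of $U$ and indices $n_1,\dots,n_J$, one has $f(x,s,z) \ge \sum_j \chi_{E_j}(x)[a_{n_j}(x,s)+b_{n_j}(x,s)\cdot z]$, and the right-hand side \emph{is} globally affine in $z$, so your Egorov/Dunford--Pettis argument passes to the limit and gives
\[
\liminf_{h\to\infty}\int_U f(x,u_h,v_h)\diff x \ge \sum_{j=1}^J \int_{E_j}\bigl[a_{n_j}(x,u)+b_{n_j}(x,u)\cdot v\bigr]\diff x.
\]
Taking the supremum over all such partitions and index choices then recovers $\int_U \sup_n[a_n(x,u)+b_n(x,u)\cdot v]\diff x = \int_U f(x,u,v)\diff x$ by a standard sup-of-integrals-equals-integral-of-sup lemma. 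With this correction your argument is complete and matches the classical proof.
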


\noindent
In our settings we choose $U=(0,T)\times\Omega$ and 
$$
    f(x,s,z)  = 
    \begin{cases}
      \displaystyle
      \frac{\lvert z\rvert^2}{s}, 
      & s>0,
      \\
      0, 
      & s\le 0,
    \end{cases}
$$
and notice that for each fixed \((x,s)\) with \(s>0\), the map \(z\mapsto |z|^2/s\) is clearly convex in \(z\).  Furthermore, we have 
$$
    u_{\eps_l}  \to  u\quad \text{strongly in }L^1(U), 
    \quad
    j_{\eps_l}  \rightharpoonup  j \quad \text{weakly in }L^1(U).
$$
We then deduce
\begin{align}
\label{eq:limit_inferior_j}
    \int_{0}^{T} \int_{\Omega(t)} |j|^2 \,\diff x\,\diff t
    \le
    \liminf_{l\to\infty} 
    \int_{0}^{T}\int_{\Omega} \frac{\lvert j_{\eps_l}\rvert^2}{u_{\eps_l}} \,\diff x\,\diff t.
\end{align}
Since the right-hand side is uniformly bounded by the initial energy (cf. \eqref{uniformbound}), it follows that
$$
    j \in L^2 ((0,T)\times \Omega ).
$$
\subsection{Step 2. Convergence}
We first show that \eqref{1} holds, by passing to the limit in the corresponding equation for $u_{\eps_l}$. Let then $\zeta\in C_c^1([0,T)\times\Omega)$. We have
\begin{align*}
    \int_\Omega u_{\eps_l,0}\zeta(\cdot,0)\diff x+\int_0^T\int_\Omega( u_{\eps_l}\partial_t\zeta+j_{\eps_l}\cdot\nabla\zeta)\diff x\diff t=0.
\end{align*}
Then the first term converges by the assumption \eqref{initial}, whereas the second one and the third converge by \eqref{compactness} and \eqref{radon}, respectively. This means that \eqref{1} holds for the limit.

The main issue is now to prove that \eqref{2} holds, i.e., for all $\xi\in C^1_c((0,T)\times\Omega)$ with $\text{div}\xi=0$ we have
\begin{align}
            &\nonumber\int_0^T\int_{\Omega(t)}\xi\cdot j(\cdot,t)\diff x \ \diff t\\&=c_0\int_0^T\int_{\partial^*\Omega(t)}tr\left[(Id-n_\phi\otimes \nu_\phi)\nabla \xi+\left(\phi^{\circ}_x(x,\nu_\phi)+\frac{\nabla K(x)}{2K(x)}\right)\otimes \xi\right]\sqrt{K(x)}\phi^{\circ}(x,\nu)d\mathcal H^{d-1} \ \diff t.
             \label{2bis}
        \end{align}
        To obtain this, we first recall the definition of $\mathcal P_\phi^K(\Omega(t))=\int_\Omega \sqrt{K(x)}\phi^{\circ}(x,\nu(x)) d\abs{D\chi_{\om(t)}}$, so that, by the proof of \cite[Proposition 4.1]{Bouchitte_Gamma} (observing that we are in a case analogous to the one in \cite[(3.22)]{Bouchitte_Gamma}), we infer 
\begin{align}
c_0\mathcal P_\phi^K(\Omega(t))\leq \liminf_{l\to\infty}\int_\Omega 2\sqrt{A(x,\nabla u_{\eps_l})K(x)W(u_{\eps_l})}\diff x , 
    \label{controlper0}
\end{align}
for almost any $t\in(0,T)$. Then, by Young's inequality, 

\begin{equation}   \label{nab2}
2\sqrt {K(x)W(u_{\eps_l}) A(x,\nabla u_{\eps_l})}\leq \eps A(x,\nabla u_{\eps_l})+\frac 1{\eps_l} K(x)W(u_{\eps_l}), 
\end{equation}  

for almost any $(t,x)\in(0,T)\times\Omega$. Therefore, we infer from \eqref{controlper0} that
\begin{align}
c_0\mathcal P_\phi^K(\Omega(t))\leq \liminf_{l\to\infty}E_{\eps
_l}(u_{\eps_l}(\cdot,t)), 
    \label{controlper}
\end{align}

for almost any $t\in(0,T)$. Integrating over $(0,T)$, this entails, by Fatou's Lemma, 
\begin{align*}
    c_0\int_0^T\mathcal{P}^K_\phi(\Omega(t))\diff t\leq \liminf_{l\to \infty}\int_0^T E_{\eps_l}(u_{\eps_l}(\cdot,t))\diff t,
\end{align*}
which, together with assumption \eqref{energ}, gives 
\begin{align}
\lim_{l\to \infty}\int_0^T E_{\eps_l}(u_{\eps_l}(\cdot,t))\diff t= c_0\int_0^T\mathcal{P}^K_\phi(\Omega(t))\diff t.
    \label{energ_covnergence}
\end{align}

This convergence, thanks to Lemma \ref{sup}, allows us prove equipartition of the energy, as observed in~\cite[Theorem 3.4]{LauxUllrich} in the context of the Allen-Cahn equation.
\begin{lemma}
\label{base}
    Under the assumptions of Theorem \ref{conv}, including the limsup energy estimate \eqref{energ}, it holds, as $l\to \infty$, 
    \begin{enumerate} [label=$(\mathbf{B \arabic*})$, ref = $\mathbf{B \arabic*}$]
    \item \label{B1}\begin{align}
 \mathcal L^1_{|(0,T)}\otimes \nabla(\psi\circ u_{\eps_l})(t)\overset{*}{\rightharpoonup}\mathcal L^1_{|(0,T)}\otimes  c_0 D\chi_{\widetilde{\Omega}}(t),\quad \text{ in }\mathcal M((0,T)\times\Omega),\label{convA}
\end{align}
and 
\begin{align}
\lim_{l\to \infty}\int_0^T\int_\Omega \phi^{\circ}(x,\sqrt {K(x)}\nabla(\psi\circ u_{\eps_l}))\diff x\diff t=c_0\int_0^T\mathcal P_\phi^K(\Omega(t))\diff t.
    \label{toshow}
\end{align}
        \item \label{B2}$\sqrt{\eps_l A(x,\nabla u_{\eps_l})}-\sqrt{\frac{1}{\eps_l}K(x)W(u_{\eps_l})}\to 0$ in $L^2((0,T)\times\Omega)$,
        \item\label{B3} $\eps_lA(x,\nabla u_{\eps_l})-\frac1{\eps_l}K(x)W(u_{\eps_l})\to 0$ in $L^1((0,T)\times\Omega)$.
        \end{enumerate}
\end{lemma}

\begin{proof}
Let us start from \eqref{B1}. First observe that, by \eqref{eq:bound_nabla_psi_u}, we immediately have
\begin{align}
\sup_\eps \int_\Omega \vert \nabla (\psi\circ u_\eps)(t)\vert \diff x \leq CE_0,
    \label{bound}
\end{align}
for almost any $t\in(0,T)$. Moreover, observe that, for $M>0$ sufficiently large it holds, by the definition of $W$ and $K\geq K_*>0$, 
\begin{align*}
    \int_{\{u_\eps\geq M\}}\vert u_\eps\vert^4 \diff x\leq \frac{C}{K_*}\int_\Omega K(x)W(u_\eps)\diff x\leq \frac{C}{K_*}E_0\eps,
\end{align*}
from the definition of $E_\eps(u_\eps)$. Then, it holds, exploiting the structure of $W$, for almost any $t\in(0,T)$, for some $p\in(1,\frac43]$,
\begin{align*}
   & \int_\Omega \vert \psi\circ u_\eps\vert^{p} \diff x \leq  \int_{u_\eps\geq M} \left(C \vert u_\eps\vert^{4}+C\vert u_\eps\vert ^3\right) \diff x +\int_{u_\eps\leq M}\vert \psi\circ u_\eps\vert^p \diff x\\&
    \leq C_M\int_{u_\eps\geq M}\vert u_\eps\vert ^4\diff x+Lip_{\psi|_{[0,M]}}^p\int_{u_\eps\leq M} \vert u_\eps\vert^p \diff x\leq C_M,\quad     \forall \eps>0.
\end{align*}
Recalling that $u_{\eps_l}\to \chi_{\Omega(t)}$ almost everywhere in $(0,T)\times\Omega$, up to subsequences, since $\psi$ is Lipschitz continuous, we have $\psi\circ u_{\eps_l}\to c_0\chi_{\Omega(t)}$ almost everywhere in $(0,T)\times\Omega$, and thus, by generalized Lebesgue's dominated convergence $\psi\circ u_{\eps_l}(t)\to c_0\chi_{\Omega(t)}\quad \text{in }L^1((0,T)\times\Omega)$, entailing, up to another subsequence, $$\psi\circ u_{\eps_l}(t)\to c_0\chi_{\Omega(t)}\text{ in }L^1(\Omega),\quad \text{for almost any }t\in (0,T).$$ 
Observe that, as already shown, $c_0\chi_{\widetilde{\Omega}}\in L^1(0,T;BV(\Omega))$.
Recalling \eqref{bound}, we can thus apply Lemma \ref{technical} with $\{v_k\}_k=\{\psi\circ u_{\eps_l}\}_l$ to deduce that 
\begin{align*}
   \mathcal L^1_{|(0,T)}\otimes D(\psi\circ u_{\eps_l})(t)=\mathcal L^1_{|(0,T)}\otimes \nabla(\psi\circ u_{\eps_l})(t)\overset{*}{\rightharpoonup}\mathcal L^1_{|(0,T)}\otimes  c_0 D\chi_{\widetilde{\Omega}}(t),\quad \text{ in }\mathcal M((0,T)\times\Omega),
\end{align*}
proving \eqref{convA}. We need now to prove \eqref{toshow}.
First, concerning the liminf part, integrating in time \eqref{controlper0} we  deduce 
\begin{align*}
    c_0\int_0^T\mathcal P_\phi^K(\Omega(t))\diff t &\leq
   \liminf_{l\to\infty}\int_{0}^{T}\int_\Omega 2\sqrt{A(x,\nabla u_{\eps_l})K(x)W(u_{\eps_l})}\diff x\diff t \\
    &=\liminf_{l\to\infty}\int_0^T\int_\Omega \phi^{\circ}(x,\sqrt{K}(x)\nabla(\psi\circ u_{\eps_l}))\diff x\diff t, 
\end{align*}
where we used in the last line the 1-homogeneity of $\phi^{\circ}$ with respect to the second variable and the definition of $\psi$. Concerning the limsup inequality, this comes directly from \eqref{energ_covnergence}. Indeed, by \eqref{nab2}, it holds
\begin{align*}
    \limsup_{l\to\infty}\int_0^T\int_\Omega \phi^{\circ}(x,\nabla(\psi\circ u_{\eps_l}))\diff x\diff t\leq \lim_{l\to\infty}\int_0^TE_{\eps_l}(u_{\eps_l})\diff t=c_0\int_0^T\mathcal P_\phi(\Omega(t))\diff t,
\end{align*}
so that \eqref{toshow} follows. 
Now, concerning \eqref{B2}, we have 
\begin{align*}
&\int_0^T\int_\Omega  \left(\sqrt{\eps_l A(x,\nabla u_{\eps_l})}-\sqrt{\frac{1}{\eps_l}K(x)W(u_{\eps_l})}\right)^2\diff x \diff t\\&
=\int_0^T E_{\eps_l}(u_{\eps_l})d t -2\int_0^T\int_\Omega  \sqrt{ A(x,\nabla u_{\eps_l})K(x)W(u_{\eps_l})}\diff x \diff t \to 0,
\end{align*}
recalling \eqref{energ_covnergence} and \eqref{toshow}. In conclusion, \eqref{B3} is a consequence of 
\begin{align*}
    &\int_0^T\int_\Omega  \left({\eps_l A(x,\nabla u_{\eps_l}})-{\frac{1}{\eps_l}K(x)W(u_{\eps_l})}\right)\diff x \diff t\\&=\int_0^T\int_\Omega  \left(\sqrt{\eps_l A(x,\nabla u_{\eps_l})}-\sqrt{\frac{1}{\eps_l}K(x)W(u_{\eps_l})}\right)\left(\sqrt{\eps_l A(x,\nabla u_{\eps_l})}+\sqrt{\frac{1}{\eps_l}K(x)W(u_{\eps_l})}\right)\diff x \diff t\\&
    \leq C(T)\left(\int_0^T\int_\Omega  \left(\sqrt{\eps_l A(x,\nabla u_{\eps_l})}-\sqrt{\frac{1}{\eps_l}K(x)W(u_{\eps_l})}\right)^2\diff x\diff t\right)^\frac12\to 0,
\end{align*}
by \eqref{B2}, where we have used the fact that 
\begin{align*}
   &\int_0^T\int_\Omega  \left(\sqrt{\eps_l A(x,\nabla u_{\eps_l})}+\sqrt{\frac{1}{\eps_l}K(x)W(u_{\eps_l})}\right)^2\diff x\diff t\\&
   = \int_0^T E_{\eps_l}(u_{\eps_l}) \diff t +2\int_0^T\int_\Omega  \sqrt{ A(x,\nabla u_{\eps_l})K(x)W(u_{\eps_l})}\diff x \diff t\leq C(T),
\end{align*}
recalling \eqref{energ_covnergence} and \eqref{toshow}. The proof of Lemma \ref{base} is concluded.
\end{proof}

Note that, from Lemma \ref{base}, we also deduce that, since $A(x,p)$ is positive 2-homogeneous in $p$, 
\begin{align}
\nonumber&\frac12\sqrt{A(x,\sqrt{K}(x)\nabla (\psi\circ u_{\eps_l}))}-\eps_lA(x,\nabla u_{\eps_l})=
\\&\sqrt{\eps_l A(x,\nabla u_{\eps_l})}\left(\sqrt{\frac1{\eps_l}K(x)W(u_{\eps_l})}-\sqrt{\eps_l A(x,\nabla u_{\eps_l})}\right)\to 0\quad \text{in }L^1((0,T)\times\Omega),
    \label{essentialsquareroot}
\end{align}
as $l\to\infty$.
Indeed, we have, by \eqref{B2},
\begin{align*}
   & \left\Vert \sqrt{\eps_l A(x,\nabla u_{\eps_l})}\left(\sqrt{\frac1{\eps_l}K(x)W(u_{\eps_l})}-\sqrt{\eps_l A(x,\nabla u_{\eps_l})}\right)\right\Vert_{L^1((0,T)\times\Omega)}\\&\leq \norm{\eps_l A(x,\nabla u_{\eps_l})}_{L^1((0,T)\times\Omega)}^\frac12 \norm{\sqrt{\frac1{\eps_l}K(x)W(u_{\eps_l})}-\sqrt{\eps_l A(x,\nabla u_{\eps_l})}}_{L^2((0,T)\times\Omega)}\\&\leq T^\frac12E_0^\frac12\norm{\sqrt{\frac1{\eps_l}K(x)W(u_{\eps_l})}-\sqrt{\eps_l A(x,\nabla u_{\eps_l})}}_{L^2((0,T)\times\Omega)}   \to 0.
\end{align*}
Analogously, recalling \eqref{B3}, we also deduce 
\begin{align}
&\frac12\sqrt{A(x,\sqrt{K}(x)\nabla (\psi\circ u_{\eps_l}))}- \frac1 {\eps_l}K(x)W(u_{\eps_l})\to 0\quad \text{in }L^1((0,T)\times\Omega),
    \label{essentialsquareroot1}
\end{align}
as $l\to\infty$. Indeed, we have by the equipartition of energy \eqref{B3} and \eqref{essentialsquareroot},
\begin{align*}
   & \left\Vert \frac12\sqrt{A(x,\sqrt{K}(x)\nabla (\psi\circ u_{\eps_l}))}- \frac1 {\eps_l}K(x)W(u_{\eps_l})\right\Vert_{L^1((0,T)\times\Omega)}\\&\leq 
   \left\Vert \frac12\sqrt{A(x,\sqrt{K}(x)\nabla (\psi\circ u_{\eps_l}))}- \eps_lA(x,\nabla u_{\eps_l})\right\Vert_{L^1((0,T)\times\Omega)}\\&+\left\Vert \eps_lA(x,\nabla u_{\eps_l})- \frac1 {\eps_l}K(x)W(u_{\eps_l})\right\Vert_{L^1((0,T)\times\Omega)}\to 0.
\end{align*}
We can now pass to consider the validity of \eqref{2bis}, starting from the equation for the flux $j_\eps$: indeed, for $\xi\in C^{1}_c((0,T)\times\Omega;\mathbb R^d)$ such that $\text{div}\xi=0$, we obtain from \eqref{weakA}. 
\begin{align}
  \nonumber &\int_0^T \int_\Omega j_{\eps}\cdot \xi\ \diff x\\
&=\int_0^T\int_\Omega \mathbf T_\eps:\nabla \xi \ \diff x\diff t+  \int_0^T\int_\Omega \eps A_x(x,\nabla u_\eps)\cdot \xi \ \diff x\diff t+\int_0^T\int_\Omega \frac1\eps W(u_\eps)\nabla K(x)\cdot \xi \ \diff x\diff t,\label{jeps}
\end{align}
where we set, as in \eqref{stresstensor}, $\mathbf T_\eps= \left(\eps A(x,\nabla u_\eps)+\frac 1 {\eps}K(x)W(u_\eps)\right)Id-\eps\left(\nabla u_\eps \otimes  A_p(x,\nabla u_\eps)\right)$, recalling that, given $B,C\in \mathbb R^{d\times d}$, $B:C:=tr(B^TC)$. 
\subsubsection{Method I. Reshetnyak-type argument}
In the first approach, in order to pass to the limit we aim at following the idea of 
\cite{CicaleseNagasePisante}, exploiting a suitable adaptation of Reshetnyak continuity theorem. First notice that we have, along the sequence $\{\eps_l\}_l$,
\begin{align*}
   & \int_0^T\int_\Omega \mathbf T_{\eps_l}:\nabla \xi+\int_0^T\int_\Omega \eps_l A_x(x,\nabla u_{\eps_l})\cdot \xi \ \diff x\diff t+\int_0^T\int_\Omega \frac1{\eps_l} K(x)W(u_{\eps_l})\nabla K(x)\cdot \xi \ \diff x\diff t\\&
   =\int_0^T\int_\Omega \left(\frac1{\epsl}K(x)W(u_{\epsl})-\epsl A(x,\nabla u_{\epsl})\right)Id:\nabla \xi \diff x\diff t\\&+\int_0^T\int_\Omega \left(2\epsl A(x,\nabla u_{\epsl})Id- \epsl\left(\nabla u_{\epsl} \otimes  A_p(x,\nabla u_{\epsl})\right)\right):\nabla \xi \diff x\diff t\\&
   +   \int_0^T\int_\Omega \eps_l A_x(x,\nabla u_{\eps_l})\cdot \xi \ \diff x\diff t+\int_0^T\int_\Omega \frac1{\eps_l} W(u_{\eps_l})\nabla K(x)\cdot \xi \ \diff x\diff t\\&
   =\underbrace{\int_0^T\int_\Omega \left(\frac1{\epsl}K(x)W(u_{\epsl})-\epsl A(x,\nabla u_{\epsl})\right)Id:\nabla \xi \diff x\diff t}_{I_1}
   \\&+\underbrace{\int_0^T\int_\Omega \left(\epsl A(x,\nabla u_{\epsl})-\frac12\sqrt{A(x,\sqrt{K}(x)\nabla(\psi\circ u_{\epsl}))}\right)
\widetilde H_{\epsl}\diff x\diff t}_{I_2}\\&
+\underbrace{\int_\Omega \left(\frac1{\eps_l} K(x) W(u_{\eps_l})\frac{\nabla K(x)}{ K(x)}-\frac12\sqrt{A(x,\sqrt{K}(x)\nabla(\psi\circ u_{\epsl}))}\frac{\nabla K(x)}{ K(x)} \right)\cdot \xi\ \diff x\diff t}_{I_3}\\&
+\underbrace{\int_0^T\int_\Omega \frac12\sqrt{A(x,\sqrt{K}(x)\nabla(\psi\circ u_{\epsl}))}H_{\epsl}\diff x\diff t}_{I_4},
\end{align*}
where 
\begin{align*}
    H_{\epsl}:=\underbrace{\left(2Id-\frac{\nabla u_{\epsl}}{\sqrt{A(x,\nabla u_{\epsl})}}\otimes  \frac{A_p(x,\nabla u_{\epsl})}{\sqrt{A(x,\nabla u_{\epsl})}}\right):\nabla \xi+\frac{ A_x(x,\nabla u_{\eps_l})}{A(x,\nabla u_{\epsl})}\cdot \xi}_{\widetilde H_{\eps_l}}+\frac{\nabla K(x)}{K(x)}\cdot \xi,
\end{align*}
and we implicitly assumed $ H_{\eps_l}=0$ on $\{x\in \Omega:\ \nabla u_{\eps_l}(x)=0\}$.
It can be seen that $H_\eps$ (and $\widetilde{H}_{\eps}$ as well)  is positively 0-homogeneous with respect to the vector corresponding to $\nabla u_{\eps_l}$, so that, since $\xi\in C^1_c((0,T)\times\Omega;\mathbb R^d)$, 
$$
\sup_{(t,x)\in(0,T)\times\Omega}\vert H_{\epsl}\vert \leq C, 
$$
uniformly in $l$. Moreover, on the set $B_{\epsl}:=\{(t,x)\in (0,T)\times\Omega: \ \vert \nabla(\psi\circ u_{\epsl})\vert_{\phi}\not=0\}$ we have $ \f{\nabla u_{\epsl}}{\vert \nabla u_{\epsl}\vert_\phi}= \f{\nabla (\psi\circ u_{\epsl})}{\vert \nabla (\psi\circ u_{\epsl})\vert_\phi}$, where we recall that, by 1-homogeneity and definition, $$\vert \nabla  u_{\epsl}\vert_\phi=\phi^{\circ}\left(x,\frac{\nabla u_{\epsl}}{\vert \nabla u_{\epsl}\vert}\right)\vert \nabla u_{\epsl}\vert =\sqrt{A(x,\nabla u_{{\epsl}})},$$
$$\vert \nabla  (\psi\circ u_{\epsl})\vert_\phi=\phi^{\circ}\left(x,\frac{\nabla (\psi\circ u_{\epsl})}{\vert \nabla (\psi\circ u_{\epsl})\vert}\right)\vert \nabla (\psi\circ u_{\epsl})\vert =\sqrt{A(x,\nabla (\psi\circ u_{\epsl}))}.$$
Therefore, we can also write
\begin{align*}
    H_{\epsl}&=\left(2Id-\frac{\nabla (\psi\circ u_{\epsl})}{\vert \nabla(\psi\circ u_{\epsl})\vert_\phi}\otimes  {A_p\left(x,\frac{\nabla (\psi\circ u_{\epsl})}{\vert 
    \nabla(\psi\circ u_{\epsl})\vert_\phi}\right)}\right):\nabla \xi\\&+{ A_x\left(x,\frac{\nabla (\psi\circ u_{\epsl})}{\vert  \nabla(\psi\circ u_{\epsl})\vert_\phi}\right)}\cdot \xi+\frac{\nabla K(x)}{K(x)}\cdot \xi.
\end{align*}
Now, since $\xi\in C^1_c((0,T)\times\Omega;\mathbb R^d)$ and $H_{\epsl},\widetilde{H}_{\epsl}$ are bounded, we easily see from Lemma \ref{base} and \eqref{essentialsquareroot} that $I_1\to 0$ and $I_2\to 0$ as $l\to \infty$. Also, $I_3\to 0$, since $\frac{\nabla K}{K}\in C(\Omega)$. Concerning $I_4$, we have
\begin{align}
    &\nonumber I_4=\int_{B_{   \epsl}} \frac12\sqrt{A(x,\sqrt{K(x)}\nabla(\psi\circ u_{\epsl}))}H_{\epsl}\diff x\diff t\\&=\frac12\int_0^T\int_\Omega \left(2Id-\frac{\nabla (\psi\circ u_{\epsl})}{\vert \nabla(\psi\circ u_{\epsl})\vert_\phi}\otimes {A_p\left(x,\frac{\nabla (\psi\circ u_{\epsl})}{\vert \nabla(\psi\circ u_{\epsl})\vert_\phi}\right)}\right):\nabla \xi\ d\vert \sqrt{K(x)}\nabla(\psi\circ u_{\epsl})\vert_\phi\ \diff t \nonumber\\&+\frac12\int_0^T\int_\Omega{ A_x\left(x,\frac{\nabla (\psi\circ u_{\epsl})}{\vert \nabla(\psi\circ u_{\epsl})\vert_\phi}\right)}\cdot \xi\ d\vert \sqrt{K(x)}\nabla(\psi\circ u_{\epsl})\vert_\phi \ \diff t\nonumber\\&
    +\frac12\int_0^T\int_\Omega\frac{\nabla K(x)}{K(x)}\cdot \xi\ d\vert \sqrt{K(x)}\nabla(\psi\circ u_{\epsl})\vert_\phi \ \diff t
    .\label{toconv}
\end{align}
We want now to apply Lemma \ref{Cic} to conclude the convergence argument. 
Let us define 
\begin{align*}
&F(t,x,p)\\&:=\left[\left(2Id-\frac{p}{\phi^{\circ}(x,p)}\otimes  {A_p\left(x,\frac{p}{\phi^{\circ}(x,p)}\right)}\right):\nabla \xi(t,x)\right.\\&\quad\left.+{ A_x\left(x,\frac{p}{\phi^{\circ}(x,p)}\right)}\cdot \xi(t,x)+\frac{\nabla K(x)}{K(x)}\cdot \xi(t,x)\right]\phi^{\circ}(x,p),
\end{align*}
so that from \eqref{toconv} we have
\begin{align*}
    I_4=\frac12\int_0^T\int_\Omega F(t,x,\sqrt{K(x)}\nabla(\psi\circ u_{\epsl}))\diff x\diff t .
\end{align*}
Observe that, recalling \eqref{convA} and \eqref{toshow} and also the proof of Lemma \ref{base}, all the assumptions of Lemmas \ref{technical} and \ref{Cic} are satisfied by choosing $\{v_k\}_k:=\{\psi\circ u_{\epsl}\}_l$, $v_0:=c_0\chi_{\widetilde{\Omega}}$, and $F$ as above (note that $\xi$ has compact support in $(0,T)\times\Omega$), so that we immediately get from \eqref{fin} that 
\begin{align*}
    I_4\to \frac12\int_0^T\int_\Omega F\left(t,x,\frac{\nu_{v_0}}{\phi^{\circ}(x,\nu_{v_0})}\right)\vert \sqrt{K(x)}Dv_0\vert_\phi\ \diff t,\quad \text{as }l\to \infty, 
\end{align*}
where $\nu_{v_0}=\frac{Dv_0}{\vert Dv_0\vert}=\nu$, with $\nu$ as the inner normal to $\partial^*\Omega(t)$. Rewriting the final limit, we have, recalling that $A_p=2\phi^{\circ}\ \phi^{\circ}_p$ and $A_x=2\phi^{\circ}\ \phi^{\circ}_x$,
\begin{align*}
    &\frac12\int_0^T\int_\Omega F(t,x,\frac{\nu_{v_0}}{\phi^{\circ}(x,\nu_{v_0})})d\vert Dv_0\vert_\phi\ \diff t\\&
    =\frac12c_0\int_0^T\int_\Omega \left[2\left(Id-\nu_\phi\otimes  n_\phi\right):\nabla \xi+2{ \phi^{\circ}_x\left(x,\nu_\phi\right)}\cdot \xi+\frac{\nabla K(x)}{K(x)}\right]\sqrt{K(x)}\phi^{\circ}(x,\nu)d \vert D\chi_{\Omega(t)}\vert\ \diff t.
\end{align*}
Coming back to \eqref{jeps}, since also $j_{\eps_l}{\rightharpoonup} j$ weakly in $L^1((0,T)\times\Omega)$, we can pass to the limit in the subsequence $\epsl$ as $l\to \infty$, to obtain in the end \eqref{2bis}. 
In conclusion, \eqref{per} comes directly from  \eqref{energyestimate1}, \eqref{eq:limit_inferior_j}, \eqref{controlper}, and the convergence assumption on $E(u_{\eps,0})$, since
\begin{align*}
     &c_0\mathcal P_\phi^K(\Omega(T))+\int_0^T\int_{\Omega(t)} \vert j\vert^2\diff x\diff t\\&\leq \liminf_{l\to\infty}\left(E_{\epsl}(u_{\epsl})+\int_0^T\int_{\Omega} \frac{\vert j_{\epsl}\vert^2}{u_{\epsl}}\diff x\diff t\right)\\&\leq \liminf_{l\to\infty}E(u_{\epsl,0})=c_0\mathcal P_\phi^K(\Omega_0).
\end{align*}
The proof of Theorem \ref{conv} is thus concluded.
\subsubsection{Method II. Anisotropic tilt excess approach
}

We propose here a second approach to prove the convergence of  \eqref{jeps} as $l\to\infty$. In particular, we prove the convergence of the right-hand side by means of an anisotropic tilt excess approach, analogously to \cite{LauxUllrich}. To this aim, we need to use the uniform convexity of $A$ which was not needed in the previous proof (our only use was the existence of weak solutions). 

Additionally, we need a further assumption on a Lipschitz property of the second argument of $\phi^{\circ}_x$, namely that there exists $C>0$ such that
\begin{align}
\abs{\phi^{\circ}_x(x,p)-\phi^{\circ}_x(x,q)}\leq C\abs{p-q},     \quad\forall p,q\in \R^d, \quad \forall x\in \Omega.
    \label{Ax}
\end{align}

We now introduce the cutoff function $\widetilde{\psi}$  as $\widetilde\psi\in C^\infty([0,\infty))$ such that
\begin{align}
\widetilde \psi(r)\equiv 0,\quad r\leq \frac14,\quad \widetilde\psi(r)=1,\quad r\geq \frac12,\quad \widetilde\psi'\geq 0.
    \label{psit}
\end{align}
Thanks to the strong convexity assumption, we can reproduce, arguing pointwise for any $x\in \Omega$, the proof of \cite[Lemma 2.4]{LauxUllrich} to obtain 
\begin{lemma}
    There exist constants $c_\phi^{\circ},C_\phi^{\circ}>0$, depending only on $\phi^{\circ}$, such that 
    \begin{align}
       \phi^{\circ}(x,p)-\abs{p'}\widetilde
       \psi(\abs{p'})\phi^{\circ}_p(x,p')\cdot p\geq c_\phi^{\circ}\abs{p-p'}^2,    \label{p1}\end{align}
       for all $p,p'\in \R^d$ such that $\abs{p}=1$ and $\abs{p'}\leq 1$, and for any $x\in \Omega$. Also, it holds 
       \begin{align}
           \phi^{\circ}(x,p)-\abs{p'}\widetilde\psi(\abs{p'})\phi^{\circ}_p(x,p')\cdot p\leq C_\phi^{\circ} (\abs{p-p'}^2+(1-\abs{p'})),\label{p2}
           \end{align}
              for all $p,p'\in \R^d$ such that $\abs{p}=1$ and $\abs{p'}\leq 1$, and for any $x\in \Omega$.
\end{lemma}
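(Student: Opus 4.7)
The argument proceeds pointwise in $x\in\Omega$ (all structural constants are uniform in $x$ by assumption) and follows the pattern of \cite[Lemma 2.4]{LauxUllrich}. The starting point is the algebraic decomposition
\begin{equation*}
\phi^{\circ}(x,p) - |p'|\widetilde\psi(|p'|)\phi^{\circ}_p(x,p')\cdot p \;=\; \bigl(1-|p'|\widetilde\psi(|p'|)\bigr)\phi^{\circ}(x,p) \;+\; |p'|\widetilde\psi(|p'|)\,\mathcal{B}(p,p'),
\end{equation*}
where $\mathcal{B}(p,p'):=\phi^{\circ}(x,p)-\phi^{\circ}(x,p')-\phi^{\circ}_p(x,p')\cdot(p-p')=\phi^{\circ}(x,p)-\phi^{\circ}_p(x,p')\cdot p$ is the Bregman divergence of $\phi^{\circ}$, after applying Euler's identity $\phi^{\circ}_p(x,p')\cdot p'=\phi^{\circ}(x,p')$ from Lemma~\ref{elementary}. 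Both summands are nonnegative: the first because $|p'|\widetilde\psi(|p'|)\leq 1$ and $\phi^{\circ}\geq 0$, the second by convexity of $\phi^{\circ}$. The strategy is to split the analysis by the size of $|p'|$.

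\textbf{Easy regime $|p'|\leq 1/2$.} Here $|p'|\widetilde\psi(|p'|)\leq 1/2$, so the first summand alone bounds the left-hand side below by $\phi^{\circ}(x,p)/2\geq \lambda/2$. Since $|p-p'|\leq 3/2$ and $1-|p'|\geq 1/2$, both (p1) and (p2) follow by adjusting constants, using only the ellipticity bound \eqref{prop_Finsler2} and the pointwise estimate $|\phi^{\circ}_p(x,\cdot)|\leq C$ (which follows from $|A_p|\leq a_1|p|$ together with $A_p=2\phi^{\circ}\phi^{\circ}_p$ and $\phi^{\circ}\geq \lambda|p|$).

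\textbf{Main regime $|p'|\geq 1/2$.} Now $\widetilde\psi(|p'|)=1$. Setting $\bar{p}':=p'/|p'|$ and using the $0$-homogeneity of $\phi^{\circ}_p$, so that $\mathcal{B}(p,p')=\mathcal{B}(p,\bar{p}')$, the expression becomes $(1-|p'|)\phi^{\circ}(x,p)+|p'|\mathcal{B}(p,\bar{p}')$. From $|p-p'|^2\leq 2|p-\bar{p}'|^2+2(1-|p'|)^2$, the lower bound (p1) reduces to the two quadratic estimates
\begin{equation*}
(1-|p'|)\phi^{\circ}(x,p)\geq 2\lambda(1-|p'|)^2 \qquad \text{and} \qquad \mathcal{B}(p,\bar{p}')\geq c|p-\bar{p}'|^2,
\end{equation*}
the first being immediate from $\phi^{\circ}\geq\lambda$ and $1-|p'|\leq 1/2$. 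The upper bound (p2) reduces analogously to $(1-|p'|)\phi^{\circ}(x,p)\leq\Lambda(1-|p'|)$ and $\mathcal{B}(p,\bar{p}')\leq C|p-\bar{p}'|^2$, the latter following from the second-order Taylor expansion of $\phi^{\circ}$, which is $C^2$ on $\{|p'|\geq 1/4\}$ with Hessian bounded uniformly in $x$ by \eqref{Ax}.

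\textbf{Main obstacle: the lower bound on $\mathcal{B}$.} The $1$-homogeneity of $\phi^{\circ}$ forces $\phi^{\circ}_{pp}(x,q)q=0$ for all $q\neq 0$, so the Bregman divergence is identically zero along the ray through $\bar{p}'$ and no global quadratic bound can hold. The saving grace is the sphere constraint $|p|=|\bar{p}'|=1$, which yields $(p-\bar{p}')\cdot\bar{p}'=-|p-\bar{p}'|^2/2$: the displacement $p-\bar{p}'$ is tangential to $S^{d-1}$ at $\bar{p}'$ up to a radial correction of order $|p-\bar{p}'|^2$. To quantify, one writes $\mathcal{B}(p,\bar{p}')=\tfrac{1}{2}(p-\bar{p}')^T\phi^{\circ}_{pp}(x,\xi^*)(p-\bar{p}')$ for some $\xi^*$ on the segment $[\bar{p}',p]$, decomposes $p-\bar{p}'=v_\parallel+v_\perp$ with $v_\parallel\propto\xi^*$ and $|v_\parallel|\leq C|p-\bar{p}'|^2$, and uses that $\phi^{\circ}_{pp}(x,\xi^*)\xi^*=0$ to reduce matters to the uniform positivity of $\phi^{\circ}_{pp}(x,\xi^*)$ on $(\xi^*)^\perp$. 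The latter follows from $A_{pp}\geq C_0 I$ (a consequence of strong convexity \eqref{strconv}) via the identity $\phi^{\circ}_{pp}=(A_{pp}-2\phi^{\circ}_p\otimes\phi^{\circ}_p)/(2\phi^{\circ})$, combined with a careful splitting of the rank-one perturbation $2\phi^{\circ}_p\otimes\phi^{\circ}_p$ along the radial direction (using again Euler's identity). For $|p-\bar{p}'|$ bounded below, the quadratic bound is immediate by compactness of $S^{d-1}\times S^{d-1}\setminus\{\text{diag}\}$ and strict positivity of $\mathcal{B}$ off the diagonal. The Lipschitz hypothesis \eqref{Ax} enters precisely to guarantee the uniformity in $x$ of the resulting constants $c_\phi^{\circ},C_\phi^{\circ}$.
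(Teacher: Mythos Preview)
Your approach is correct and coincides with the paper's: the paper simply states that one reproduces the proof of \cite[Lemma~2.4]{LauxUllrich} pointwise in $x$, and your sketch is precisely that argument (decomposition via the Bregman divergence of $\phi^\circ$, case distinction on $|p'|$, and the Hessian/compactness estimate on $S^{d-1}\times S^{d-1}$).

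One minor correction: you invoke hypothesis \eqref{Ax} twice, once to bound $\phi^\circ_{pp}$ and once to obtain uniformity of $c_\phi^\circ,C_\phi^\circ$ in $x$. Neither use is right: \eqref{Ax} is a Lipschitz bound on $p\mapsto\phi^\circ_x(x,p)$, i.e.\ on the $x$-derivative, and plays no role in this lemma. The $C^2$ regularity and Hessian bound in $p$ (away from the origin) come from assumption~\eqref{ass:A}, which gives $A\in C^2(\Omega\times(\R^d\setminus\{0\}))$; the $-1$-homogeneity of $\phi^\circ_{pp}$ then yields uniform bounds on $\{|p|\geq 1/4\}$. The uniformity in $x$ follows from the uniform constants $A_0,A_1,a_1,C_0$ in~\eqref{ass:A} together with compactness of $\Omega$. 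Your argument is otherwise sound; in particular, the positivity of $\phi^\circ_{pp}(x,\xi^*)$ on $(\xi^*)^\perp$ can be made precise by a Schur complement: for $v\perp\xi^*$ one has $2\phi^\circ(x,\xi^*)\,v^T\phi^\circ_{pp}(x,\xi^*)v = v^TA_{pp}v - (v^TA_{pp}\xi^*)^2/((\xi^*)^TA_{pp}\xi^*)$, which is $\geq C_0|v|^2$ since $A_{pp}\geq C_0\,Id$.
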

We can now define the notion of relative entropy.  Given $ u\in BV (\Omega;\{0,1\})$, let $\nu=\frac{D u}{\abs{Du}}$
be the measure theoretic inner unit normal. Recalling the definition of \eqref{psit}, the relative entropy of $u$ with respect to a vector field $\zeta \in C(\Omega)^d$
is
\begin{align}
\mathcal E[u|\zeta]:=c_0\int_\Omega (\phi^{\circ}(x,\nu)-\abs{\zeta}\widetilde{\psi}(\abs{\zeta})\phi^{\circ}_p(x,\zeta)\cdot\nu)\sqrt{K(x)}d\abs{Du}.
    \label{relentr}
\end{align}
Analogously, given $\eps>0$ and $u_\eps\in H^1(\Omega)$, and defining the approximated inner unit normal as
\begin{align}
    \nu_\eps:=\begin{cases}
        \frac{\nabla u_\eps}{\abs{\nabla u_\eps}},\quad \text{ if }\nabla u_\eps\not=0,\\
        e_1,\quad \text{ if } \nabla u_\eps=0,
    \end{cases}
    \label{nueps}
\end{align}
the $\eps$-relative entropy of $u_\eps$ with respect to a vector field $\zeta \in C(\Omega)^d$
is
\begin{align}
\nonumber\mathcal E_\eps[u_\eps|\zeta]&:=2\int_\Omega (\phi^{\circ}(x,\nabla u_\eps)-\abs{\zeta}\widetilde\psi(\abs{\zeta})\phi^{\circ}_p(x,\zeta)\cdot\nabla u_\eps)\sqrt{K(x)W(u_\eps)}d x\\&
\ =2\int_\Omega (\phi^{\circ}(x,\nu_\eps)-\abs{\zeta}\widetilde\psi(\abs{\zeta})\phi^{\circ}_p(x,\zeta)\cdot\nu_\eps)\sqrt{K(x)W(u_\eps)}\abs{\nabla u_\eps}d x.
    \label{relentreps}
\end{align}
Then, recalling Lemma \ref{base}, we can follow the same arguments as in \cite[Lemma 4.7]{LauxUllrich}, exploiting \eqref{p1}, to infer that, under the assumptions of Theorem \ref{conv}, it holds, for the same subsequence $\{\eps_l\}_l$, 
\begin{align}
    \lim_{l\to \infty} \int_0^T\mathcal E_{\eps_l}[u_{\eps_l}(t)|\zeta(t)]d t=\int_0^T\mathcal E[c_0\chi_{\widetilde{\Omega}}(t)|\zeta(t)]\diff t,\label{convergence1}
\end{align}
for every $\zeta\in C([0,T]\times\Omega)^d$ such that $\abs{\zeta}\leq1$.
Then, by means of \eqref{p2}, we can also show that that the tilt excess can be made arbitrarily small by approximating the
normal $\nu$ with suitable continuous vector fields $\zeta$. Namely, by arguing as in the proof of \cite[Lemma 4.8]{LauxUllrich}, recalling that $K\in C^1(\Omega)$ and $0<K_*\leq K(x)\leq K^*$ for any $x\in \Omega$, we infer that for any $\delta>0$ there exists a smooth vector $\zeta\in C^1([0,T]\times\Omega)^d$ such that $\abs{\zeta}\leq1$ and 
\begin{align}
\int_0^T\mathcal E[c_0\chi_{\widetilde{\Omega}}(t)|\zeta(t)]d t<\delta.
\label{smallunif}
\end{align}
Now we can complete the proof. In particular, it is immediate to see that, following the same lines of the proof in \cite{LauxUllrich}, we can prove, exploiting the above results and Lemma \ref{base}, that
\begin{align*}
    \int_0^T\int_\Omega\mathbf T_{\eps_l}:\nabla \xi \diff x \diff t\to c_0\int_0^T\int_\Omega \left(Id-\nu_\phi\otimes  n_\phi\right):\nabla \xi\sqrt{K(x)}\phi^{\circ}(x,\nu)d \vert D\chi_{\Omega(t)}\vert\ \diff t,
\end{align*}
and we therefore omit the proof, referring to \cite[Lemma 4.8]{LauxUllrich} for the details.
We thus concentrate on the two new terms appearing in the right-hand side of \eqref{jeps}. First, given $\zeta\in C([0,T]\times\Omega)^d$, we have, since $\phi^{\circ}_x$ is still 1-homogeneous in its second argument, 
\begin{align*}
    &\int_0^T\int_\Omega \eps_lA_x(x,\nabla u_{\eps_l})\cdot \xi\  \diff x\ \diff t\\&=2\int_0^T\int_\Omega \phi^{\circ}_x\left(x,\frac{\nabla u_{\eps_l}}{\phi^{\circ}(x,\nabla u_{\eps_l})}\right)\cdot \xi\ \eps_l A\left(x,{\nabla u_{\eps_l}}\right) \diff x\ \diff t\\&
    =2\int_0^T\int_\Omega \phi^{\circ}_x\left(x,\frac{\nabla u_{\eps_l}}{\phi^{\circ}(x,\nabla u_{\eps_l})}\right)\cdot \xi\ \left(\eps_l A\left(x,{\nabla u_{\eps_l}}\right)-\frac12\phi^{\circ}(x,\sqrt{K(x)}\nabla(\psi\circ u_{\eps_l}))\right) \diff x\ \diff t\\&
    + \int_0^T\int_\Omega \phi^{\circ}_x\left(x,\frac{\nabla u_{\eps_l}}{\phi^{\circ}(x,\nabla u_{\eps_l})}\right)\cdot \xi\ \sqrt{K(x)}\phi^{\circ}(x,\nabla(\psi\circ u_{\eps_l})) \diff x\ \diff t,
\end{align*}
and observe that the first term converges to zero as $l\to\infty$, since $\phi^{\circ}_x\left(x,\frac{\nabla u_{\eps_l}}{\phi^{\circ}(x,\nabla u_{\eps_l})}\right)\leq C$ (indeed, $\phi^{\circ}_x/\phi^{\circ}$ is 0-homogeneous in the second argument) and exploiting the convergence \eqref{essentialsquareroot}. Concerning the second term, we can compare it with its expected limit, to obtain, for any $\zeta\in C([0,T]\times\Omega)^d$, $\abs{\zeta}\leq1$,
\begin{align*}
    &\int_0^T\int_\Omega \phi^{\circ}_x\left(x,\frac{\nabla u_{\eps_l}}{\phi^{\circ}(x,\nabla u_{\eps_l})}\right)\cdot \xi\ \sqrt{K(x)}\phi^{\circ}(x,\nabla(\psi\circ u_{\eps_l})) \diff x\ \diff t\\&-c_0\int_0^T\int_\Omega \phi^{\circ}_x(x,\nu_\phi)\cdot \xi{\sqrt K(x)}\phi^{\circ}(x,\nu)d\abs{D\chi_{\widetilde{\Omega}}(t)}\diff t\\&
    =\underbrace{2\int_0^T\int_\Omega \phi^{\circ}_x(x,\nu_{\eps_l})\cdot \xi\ \sqrt{K(x)W(u_{\eps_l})}\abs{\nabla u_{\eps_l}} \diff x\ \diff t-2\int_0^T\int_\Omega \phi^{\circ}_x(x,\zeta)\cdot \xi\ \sqrt{K(x)W(u_{\eps_l})}\abs{\nabla u_{\eps_l}} \diff x\ \diff t}_{J_1}\\&
    \underbrace{+2\int_0^T\int_\Omega \phi^{\circ}_x(x,\zeta)\cdot \xi\ \sqrt{K(x)W(u_{\eps_l})}\abs{\nabla u_{\eps_l}} \diff x\ \diff t
    -c_0\int_0^T\int_\Omega \phi^{\circ}_x(x,\zeta)\cdot \xi{\sqrt K(x)}d\abs{D\chi_{\widetilde{\Omega}}(t)}\diff t}_{J_2}\\&
    \underbrace{+c_0\int_0^T\int_\Omega \phi^{\circ}_x(x,\zeta)\cdot \xi{\sqrt K(x)}d\abs{D\chi_{\widetilde{\Omega}}(t)}\diff t-c_0\int_0^T\int_\Omega \phi^{\circ}_x(x,\nu)\cdot \xi{\sqrt K(x)}d\abs{D\chi_{\widetilde{\Omega}}(t)}\diff t}_{J_3}.
\end{align*}
We now estimate each $J_i$ for $i=1,2,3$. Let us start from $J_2$, since $\zeta\in C([0,T]\times\Omega)^d$ and $K\in C^1(\Omega)$, the fact that $J_2\to 0$ as $l\to \infty$ directly comes from \eqref{convA}.
We now pass to $J_1$ and $J_3$. It holds, recalling \eqref{bound}, \eqref{Ax} and \eqref{p1}, the regularity of $\xi$, and Cauchy-Schwarz inequality,
\begin{align*}
    \abs{J_1}&\leq  2\int_0^T\int_\Omega \abs{\phi^{\circ}_x(x,\nu_{\eps_l})-\phi^{\circ}_x(x,\zeta)}\abs{\xi}\ \sqrt{K(x)W(u_{\eps_l})}\abs{\nabla u_{\eps_l}} \diff x\diff t
    \\&
    \leq C\int_0^T\int_\Omega \abs{\phi^{\circ}_x(x,\nu_{\eps_l})-\phi^{\circ}_x(x,\zeta)}\abs{\xi}\ \sqrt{K(x)W(u_{\eps_l})}\abs{\nabla u_{\eps_l}} \diff x\diff t \\&
    \leq C\int_0^T\int_\Omega \abs{\nu_{\eps_l}-\zeta}\ \sqrt{K(x)W(u_{\eps_l})}\abs{\nabla u_{\eps_l}} \diff x\diff t\\&
    \leq C\left(\int_0^T\int_\Omega \sqrt{K(x)W(u_{\eps_l})}\abs{\nabla u_{\eps_l}} \diff x\diff t\right)^\frac12\left(\int_0^T\int_\Omega \abs{\nu_{\eps_l}-\zeta}^2\ \sqrt{K(x)W(u_{\eps_l})}\abs{\nabla u_{\eps_l}} \diff x\diff t\right)^\frac12
    \\& \leq C(T)\left(\int_0^T\mathcal E_{\eps_l}[u_{\eps_l}(t)|\zeta(t)]\diff t\right)^\frac12.
\end{align*}
Similarly, about $J_3$, by \eqref{energyestimate1}, \eqref{controlper}, \eqref{Ax}, \eqref{p1}, and Cauchy-Schwarz inequality,
\begin{align*}
    &\abs{J_3}\leq C\left(\int_0^T\int_\Omega \sqrt K(x) d\abs{D\chi_{\widetilde{\Omega}}(t)}\diff t\right)^\frac12\left(\int_0^T\int_\Omega \abs{\nu-\zeta}^2\sqrt K(x) d\abs{D\chi_{\widetilde{\Omega}}(t)}\diff t\right)^\frac12\\&
    \leq C\left(\int_0^T\mathcal P_\phi^K(\Omega(t))\diff t\right)^\frac12\left(\int_0^T\mathcal E[c_0\chi_{\widetilde{\Omega}}(t)|\zeta]\diff t\right)^\frac12\leq C(T)\left(\int_0^T\mathcal E[c_0\chi_{\widetilde{\Omega}}(t)|\zeta]\diff t\right)^\frac12,
\end{align*}
so that, by taking the limit as $l\to       \infty$ and recalling \eqref{convergence1}, we get
    \begin{align}
        \limsup_{l\to \infty}(\abs{J_1}(\zeta)+\abs{J_3}(\zeta))\leq C(T)\left(\int_0^T\mathcal E[c_0\chi_{\widetilde{\Omega}}(t)|\zeta(t)]\diff t\right)^\frac12.\label{reg1}
    \end{align}
Then, by the arbitrariness of $\zeta$, we infer from \eqref{smallunif} that the right-hand side in \eqref{reg1} can be made arbitrarily small.

We can thus conclude that 
\begin{align*}
&\int_0^T\int_\Omega \phi^{\circ}_x\left(x,\frac{\nabla u_{\eps_l}}{\phi^{\circ}(x,\nabla u_{\eps_l})}\right)\cdot \xi\ \sqrt{K(x)}\phi^{\circ}(x,\nabla(\psi\circ u_{\eps_l})) \diff x\ \diff t\\&\to c_0\int_0^T\int_\Omega \phi^{\circ}_x(x,\nu_\phi)\cdot \xi{\sqrt K(x)}\phi^{\circ}(x,\nu)d\abs{D\chi_{\widetilde{\Omega}}(t)}\diff t,
\end{align*}
as $l\to \infty$. 

We are left to consider the last term in the right-hand side of \eqref{jeps}. Namely, we have
\begin{align*}
    &\int_0^T\int_\Omega \frac1{\eps_l}W(u_{\eps_l})\nabla K(x)\cdot \xi \diff x \diff t\\&= \int_0^T\int_\Omega \frac1{\eps_l}K(x)W(u_{\eps_l})\frac{\nabla K(x)}{{K(x)}}\cdot \xi \diff x \diff t\\&
    =\int_0^T\int_\Omega \left(\frac1{\eps_l}K(x)W(u_{\eps_l})-\frac12\sqrt{K(x)}\phi^{\circ}(x,\nabla(\psi\circ u_{\eps_l}))\right)\frac{\nabla K(x)}{{K(x)}}\cdot \xi \diff x \diff t\\&+\int_0^T\int_\Omega \sqrt{K(x)}\phi^{\circ}(x,\nabla(\psi\circ u_{\eps_l}))\frac{\nabla K(x)}{2{K(x)}}\cdot \xi \diff x \diff t,
\end{align*}
and again the first term in the right-hand side converges to zero as $l\to\infty$ due to \eqref{essentialsquareroot1} and the regularity of $K$. In conclusion, concerning the last summand, we first note that, as $l\to\infty$,
\begin{align}
\mathcal L^1_{|(0,T)}\otimes \phi^{\circ}(\cdot,\sqrt K\nabla(\psi\circ u_{\eps_l}))\overset{*}{\rightharpoonup}\mathcal L^1_{|(0,T)}\otimes 
 c_0 \sqrt K\phi^{\circ}(\cdot,\nu)d\abs{D \chi_{\widetilde{\Omega}}(t)},\quad\text{ weakly* in }\mathcal M((0,T)\times\Omega).
\label{lastconv}
\end{align}
\begin{proof}[Proof of \eqref{lastconv}]
By a standard Fubini-like argument from \eqref{controlper0}, which also holds for any open set in place of $\Omega$ (restricting the perimeter on that set), it is easy to deduce the validity of \eqref{lastconv}.
On the other hand this can also be obtained by means a tilt-excess argument, which we detail here. For any $\rho \in C([0,T]\times\Omega)^d$ and for any $\zeta\in C([0,T]\times\Omega)^d$ such that $\abs{\zeta}\leq 1$, we have
\begin{align*}
    &\int_0^T\int_\Omega \rho(t,x)\cdot \phi^{\circ}(x,\sqrt {K(x)}\nabla(\psi\circ u_{\eps_l}))\diff x\diff t-c_0\int_0^T\int_\Omega \rho(t,x)\sqrt {K(x)} \phi^{\circ}(x,\nu)d\abs{D \chi_{\widetilde{\Omega}}(t)}\\&
    =\\&\underbrace{2\int_0^T\int_\Omega \rho(t,x)\phi^{\circ}(x,\nu_{\eps_l})\cdot \sqrt {K(x)W(u_{\eps_l})}\abs{\nabla u_{\eps_l}}\diff x\diff t-2\int_0^T\int_\Omega \rho(t,x)\phi^{\circ}(x,\zeta)\sqrt {K(x)W(u_{\eps_l})} \diff x \diff t}_{J_4}\\&
    +\underbrace{2\int_0^T\int_\Omega \rho(t,x)\phi^{\circ}(x,\zeta)\sqrt {K(x)W(u_{\eps_l})} \diff x \diff t-c_0\int_0^T\int_\Omega \rho(t,x)\sqrt {K(x)} \phi^{\circ}(x,\zeta)d\abs{D \chi_{\widetilde{\Omega}}(t)}}_{J_5}\\&
    +\underbrace{c_0\int_0^T\int_\Omega \rho(t,x)\sqrt {K(x)} \phi^{\circ}(x,\zeta)d\abs{D \chi_{\widetilde{\Omega}}(t)}-c_0\int_0^T\int_\Omega \rho(t,x)\sqrt {K(x)} \phi^{\circ}(x,\nu)d\abs{D \chi_{\widetilde{\Omega}}(t)}}_{J_6}.
\end{align*}
Concerning $J_5\to 0$, this directly comes from \eqref{convA}.
Now, since $\phi^{\circ}$ is Lipschitz continuous in its second argument due to assumption \eqref{ass:A}, using the same estimates as for $J_1$, we get
\begin{align*}
    \abs{J_4}&\leq  2\int_0^T\int_\Omega \abs{\phi^{\circ}(x,\nu_{\eps_l})-\phi^{\circ}(x,\zeta)}\abs{\rho}\ \sqrt{K(x)W(u_{\eps_l})}\abs{\nabla u_{\eps_l}} \diff x\diff t
    \\&
    \leq C\int_0^T\int_\Omega \abs{\nu_{\eps_l}-\zeta}\ \sqrt{K(x)W(u_{\eps_l})}\abs{\nabla u_{\eps_l}} \diff x\diff t
    \leq C(T)\left(\int_0^T\mathcal E_{\eps_l}[u_{\eps_l}(t)|\zeta(t)]\diff t\right)^\frac12.
\end{align*}
In conclusion, about $J_6$, recalling again the Lipschitz property of $\phi^{\circ}$ in its second argument, and arguing as for $J_3$,
\begin{align*}
    &\abs{J_6}\leq C\left(\int_0^T\int_\Omega \sqrt K(x) d\abs{D\chi_{\widetilde{\Omega}}(t)}\diff t\right)^\frac12\left(\int_0^T\int_\Omega \abs{\nu-\zeta}^2\sqrt K(x) d\abs{D\chi_{\widetilde{\Omega}}(t)}\diff t\right)^\frac12\\&
    \leq C\left(\int_0^T\mathcal P_\phi^K(\Omega(t))\diff t\right)^\frac12\left(\int_0^T\mathcal E[c_0\chi_{\widetilde{\Omega}}(t)|\zeta]\diff t\right)^\frac12\leq C(T)\left(\int_0^T\mathcal E[c_0\chi_{\widetilde{\Omega}}(t)|\zeta]\diff t\right)^\frac12,
\end{align*}
As a consequence, we can write, recalling \eqref{convergence1}, 
   \begin{align*}
        \limsup_{l\to \infty}(\abs{J_4}(\zeta)+\abs{J_6}(\zeta))\leq C(T)\left(\int_0^T\mathcal E[c_0\chi_{\widetilde{\Omega}}(t)|\zeta(t)]\diff t\right)^\frac12.
    \end{align*}
Therefore by the arbitrariness of $\zeta$,  from \eqref{smallunif} we deduce that the quantity above can be made arbitrarily small. This proves \eqref{lastconv}.
\end{proof}

Therefore, thanks to the regularity of $K$ and $\xi$, we immediately infer 
\begin{align*}
    &\int_0^T\int_\Omega \frac{\nabla K(x)}{2K(x)}\cdot \xi\ \sqrt{K(x)}\phi^{\circ}(x,\nabla(\psi\circ u_{\eps_l})) \diff x\ \diff t\\&\to c_0\int_0^T\int_\Omega \frac{\nabla K(x)}{2K(x)}\cdot \xi{\sqrt K(x)}\phi^{\circ}(x,\nu)d\abs{D\chi_{\widetilde{\Omega}}(t)}\diff t.
\end{align*}
This allows to show that 
$$
\int_0^T\int_\Omega \frac1{\eps_l}W(u_{\eps_l})\nabla K(x)\cdot \xi \diff x \diff t\to c_0\int_0^T\int_\Omega {\nabla \sqrt K(x)}\cdot \xi\phi^{\circ}(x,\nu)d\abs{D\chi_{\widetilde{\Omega}}(t)}\diff t,
$$
as $l\to \infty$.

As a consequence, since also $j_{\eps_l}{\rightharpoonup} j$ weakly in $L^1((0,T)\times\Omega)$, we can pass to the limit as $l\to \infty$ in  \eqref{jeps}, to obtain again in the end \eqref{2bis}. This concludes the argument concerning the convergence in \eqref{jeps} by means of anisotropic tilt excess.
\subsection{Weak solutions to anisotropic weighted Hele-Shaw flow are strong solutions}
Here we show that, if $j$ and $\widetilde{\Omega}:=
\bigcup_{t\in[0,T]}\Omega(t)\times\{t\}$ are sufficiently smooth and additionally they are a weak solution to the anisotropic Hele–Shaw flow according to Definition \ref{heleshaw}, then $\widetilde{\Omega}$ and $j$ solve the anisotropic weighted Hele–Shaw equations in the classical sense, according to Definition \ref{classicalHeleshaw}.
In particular, we have the following result
\begin{lemma}
    Let $(\widetilde{\Omega}, j)$ be a weak solution to the anisotropic weighted Hele-Shaw flow in the sense of Definition \ref{heleshaw}. If $j$ is sufficiently smooth and $\Omega(t)$ evolves smoothly and is simply connected for all t, then $(\widetilde{\Omega}, j)$ is also a classical solution to the anisotropic weighted Hele–Shaw flow \eqref{a1}-\eqref{a2}.
\end{lemma}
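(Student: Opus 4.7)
The plan is to reverse the integration-by-parts calculations that justify the weak formulation, using the extra regularity of $j$ and of the moving family $\{\Omega(t)\}$ to pass from integral identities to pointwise relations. The argument splits into two parts: derive \eqref{a1} from \eqref{1}, then derive \eqref{a2} from \eqref{2}.

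\textbf{Proof of \eqref{a1}.} Applying the Reynolds transport theorem to the smooth moving domain,
\begin{equation*}
\int_0^T\!\!\int_\Omega \chi_{\Omega(t)}\partial_t\zeta\,dx\,dt = -\int_{\Omega_0}\zeta(0,\cdot)\,dx - \int_0^T\!\!\int_{\partial\Omega(t)} V\,\zeta\,d\mathcal{H}^{d-1}\,dt
\end{equation*}
(with $V$ the outer-normal velocity), together with the classical divergence theorem on $\int_{\Omega(t)} j\cdot\nabla\zeta\,dx$, the $t=0$ trace cancels the initial datum and \eqref{1} reduces to
\begin{equation*}
\int_0^T\!\!\int_{\Omega(t)}\zeta\,\text{div}\,j\,dx\,dt + \int_0^T\!\!\int_{\partial\Omega(t)}(V+j\cdot\nu)\,\zeta\,d\mathcal{H}^{d-1}\,dt = 0.
\end{equation*}
Localizing $\zeta$ first inside $\Omega(t)$ and then freely near the boundary yields $\text{div}\,j = 0$ in $\Omega(t)$ and $V = -j\cdot\nu$ on $\partial\Omega(t)$, which is \eqref{a1}.

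\textbf{Proof of \eqref{a2}.} From $\text{div}\,j=0$ and the simple connectedness of the smooth $\Omega(t)$, the Helmholtz decomposition provides $p(\cdot,t)$ (unique up to an additive constant $c(t)$) with $j = -\nabla p$ in $\Omega(t)$, which is the first relation of \eqref{a2}. Substituting $j=-\nabla p$ into the LHS of \eqref{2} and using $\text{div}\,\xi=0$ gives
\begin{equation*}
\int_0^T\!\!\int_{\Omega(t)}\xi\cdot j\,dx\,dt = \int_0^T\!\!\int_{\partial\Omega(t)} p\,\xi\cdot\nu\,d\mathcal{H}^{d-1}\,dt.
\end{equation*}
On the RHS of \eqref{2}, rewrite the trace integrand via $\text{tr}(a\otimes b)=a\cdot b$ and $\frac{\nabla K}{2K}\sqrt{K}=\nabla\sqrt{K}$ as $\sqrt{K}\,\text{div}_\phi\xi\,\phi^\circ(x,\nu) + \nabla\sqrt{K}\cdot\xi\,\phi^\circ(x,\nu)$, then apply Theorem \ref{basic1} with $g=\sqrt{K}\xi$ after expanding $\text{div}_\phi(\sqrt{K}\xi)$ by the product rule. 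Using the identities $\nu_\phi\,\phi^\circ(x,\nu)=\nu$ and $\nu\cdot n_\phi=\phi^\circ(x,\nu)$ from Lemma \ref{elementary} to reassemble the boundary terms, \eqref{2} takes the form
\begin{equation*}
\int_0^T\!\!\int_{\partial\Omega(t)}\bigl[p + c_0\sqrt{K}\,H_\phi - c_0\,\nabla\sqrt{K}\cdot n_\phi\bigr]\,\xi\cdot\nu\,d\mathcal{H}^{d-1}\,dt = 0
\end{equation*}
for every divergence-free $\xi$. Since the traces $\xi\cdot\nu$ of such fields range over all smooth functions of zero mean on each $\partial\Omega(t)$, the bracket is constant in $x$ for a.e.\ $t$, and this $t$-dependent constant is absorbed into the additive freedom of $p$, giving the second identity of \eqref{a2}.

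\textbf{Main obstacle.} The delicate point lies in the anisotropic, weighted integration by parts of the second step: the $\nabla\sqrt{K}$ correction produced by the product rule expansion of $\text{div}_\phi(\sqrt{K}\xi)$ must combine cleanly with the explicit $\frac{\nabla K}{2K}\cdot\xi$ term already present in \eqref{2} so that every weight-gradient contribution reassembles into the single scalar $\nabla\sqrt{K}\cdot n_\phi$ appearing in \eqref{a2}, rather than leaving behind an uncontrolled tangential residue. This relies crucially on the duality $\nu_\phi\cdot n_\phi=1$ of Lemma \ref{elementary} and on the observation that divergence-free test fields on the torus impose only the single scalar constraint $\int_{\partial\Omega(t)}\xi\cdot\nu\,d\mathcal{H}^{d-1}=0$, which is precisely the one degree of freedom matched by the undetermined additive constant in the pressure.
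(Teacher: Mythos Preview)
Your derivation of the first relation in \eqref{a2} contains a genuine error. You write: ``From $\operatorname{div} j=0$ and the simple connectedness of the smooth $\Omega(t)$, the Helmholtz decomposition provides $p(\cdot,t)$ with $j=-\nabla p$ in $\Omega(t)$.'' But $\operatorname{div} j=0$ says precisely that $j$ lies in the divergence-free part of the Helmholtz decomposition, \emph{not} in the gradient part. In a simply connected domain, $\operatorname{div} j=0$ yields a stream function (in 2D) or a vector potential (in 3D), whereas the conclusion $j=-\nabla p$ would follow from $\operatorname{curl} j=0$, which you have not established.

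The paper obtains $j=-\nabla p$ by a different route that actually uses the structure of \eqref{2}: one tests \eqref{2} with divergence-free $\xi$ compactly supported \emph{inside} $\Omega(t)$, so that the boundary integral on the right-hand side vanishes and one is left with $\int_{\Omega(t)} j\cdot\xi\,dx=0$. By density this extends to all divergence-free $\xi\in L^2(\Omega(t))$, so $j$ is $L^2$-orthogonal to the solenoidal subspace; the Helmholtz decomposition then forces $j=-\nabla p$. Your argument for \eqref{a1} and the remaining boundary manipulation (product-rule expansion of $\operatorname{div}_\phi(\sqrt{K}\xi)$, Theorem \ref{basic1}, absorbing the constant into $p$) are essentially correct and in line with the paper, but the gap above is the missing idea that makes the pressure exist in the first place.
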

\begin{proof}
 The proof of the validity of \eqref{a1} for $(j,\widetilde{\Omega})$ can be obtained as in Step 1 of the proof of \cite[Lemma 4.10]{laux_kroemer}. Concerning \eqref{a2}, we can follow Step 2 of the same lemma, adapting some parts. Let us consider $\xi\in C^1_c(\Omega(t);\mathbb R^d)$ such that $\Div \xi=0$. Thanks to \eqref{2}, we have  
\begin{align}       
&\int_0^T\int_{\Omega(t)}\xi\cdot j(\cdot,t)\diff x \ \diff t\\&=c_0\int_0^T\int_{\partial\Omega(t)}tr\left[(Id-n_\phi\otimes \nu_\phi)\nabla \xi+\left(\phi^{\circ}_x(x,\nu_\phi)+\frac{\nabla K(x)}{2K(x)}\right)\otimes \xi\right]\sqrt{K}(x)\phi^{\circ}(x,\nu) d\mathcal H^{d-1} \ \diff t=0,\label{2b}
        \end{align}
since $\xi$ has compact support inside $\Omega(t)$, so that 
$$
\int_0^T\int_{\Omega(t)}j\cdot \xi \diff x \diff t=0.
$$
This entails by a density argument that, for any $t\in[0,T]$ (recall that $\Omega(t)$ is a smooth function of $t$), 
$$
\int_{\Omega(t)}j\cdot \xi \diff x=0, \quad\forall \xi\in L^2(\Omega(t)),\quad \Div \xi =0, 
$$
implying that $j\perp_{L^2(\Omega(t))}\{\xi\in L^2(\Omega(t)):\ \Div\xi=0\}$, so that, since $\Omega(t)$ is of class $C^{2,\alpha}$, $\alpha\in(0,1]$, and simply connected, by the Helmholtz decomposition (see, e.g., \cite[Chap. IV]{boyer}) there exists $p(t)\in H^1(\Omega(t))$ such that 
$$
j=-\nabla p(t),\quad\text{ in }\Omega(t),\quad \forall t\in[0,T].
$$
Plugging this result in \eqref{2}, we infer, recalling Theorem \ref{basic1} and the definition of $\Div_\phi$, 
\begin{align*}
   &- \int_0^T\int_{\Omega(t)}\nabla p(t)\cdot \xi \diff x\diff t \\&=c_0\int_0^T\int_{\partial\Omega(t)}\Div_\phi(\xi\sqrt K)\phi^{\circ}(x,\nu)d\mathcal H^{d-1} \diff t\\&\quad +c_0\int_0^T\int_{\partial\Omega(t)}({\nabla \sqrt{K(x)}}\cdot n_\phi) (\xi\cdot \nu_\phi)\phi^{\circ}(x,\nu)d\mathcal H^{d-1}\diff t\\&=c_0\int_0^T\int_{\partial\Omega(t)}\left(-H_\phi\sqrt {K(x)}+{\nabla \sqrt{K(x)}}\cdot n_\phi\right) (\xi\cdot \nu_\phi)\phi^{\circ}(x,\nu) \ d\mathcal H^{d-1}\diff t\\&
=c_0\int_0^T\int_{\partial\Omega(t)}\left(-H_\phi\sqrt{K(x)}+{\nabla \sqrt{K(x)}}\cdot n_\phi\right)\nu\cdot \xi \ d\mathcal H^{d-1}\diff t,
\end{align*}
recalling that $\nu_\phi= \f{\nu}{\phi^{\circ}(x,\nu)}$. Observing also that, by the divergence theorem,
\begin{align*}
    - \int_0^T\int_{\Omega(t)}\nabla p(t)\cdot \xi \diff x\diff t=\int_0^T\int_{\partial\Omega(t)}p(t) \xi\cdot\nu d\mathcal H^{d-1}\diff t,
\end{align*}
we obtain by the fundamental lemma of calculus of variations that $$p(t)=-c_0H_\phi(t)\sqrt{K}+c_0{\nabla \sqrt{K}}\cdot n_\phi$$ on $\partial\Omega(t)$ for any $t\in[0,T]$. This concludes the proof that $(j,\widetilde{\Omega})$ also satisfies \eqref{a2}.
\end{proof}

\ \\ \ \\ \
\textbf{Acknowledgments.} We thank the anonymous referees for their valuable comments and remarks, which significantly improved the clarity of our work. Part of this contribution was completed while CE was visiting AP at the Faculty  of Mathematics of the University of Vienna, whose hospitality is kindly acknowledged.
AP is a member of Gruppo Nazionale per l’Analisi Matematica, la Probabilità e le loro Applicazioni (GNAMPA) of
Istituto Nazionale per l’Alta Matematica (INdAM). AP also gratefully aknowledges support from the Alexander von Humboldt Foundation. This research was funded in part by the Austrian Science Fund (FWF) \href{https://doi.org/10.55776/ESP552}{10.55776/ESP552}. CE was supported by the European Union via the ERC
AdG 101054420 EYAWKAJKOS project.
For open access purposes, the authors have applied a CC BY public copyright license to
any author accepted manuscript version arising from this submission. 
\\

\begin{appendices}
\renewcommand{\thesection}{\Alph{section}.}
 \renewcommand{\theequation}{\thesection\arabic{equation}}
\section{Proof of Lemma \ref{Cic}}
Here we propose the proof of Lemma \ref{Cic}, which is a generalization of \cite[Lemma 3.7]{CicaleseNagasePisante}.
\label{Appendix}
Let $\theta_k: (0,T)\times\Omega\to \{(t,x,p): \ x\in \Omega,\ t\in(0,T),\ p\in \partial B_{\phi^\circ(x)}\}$ be defined by 
\begin{align*}
    \theta_k(t,x):=\left(t,x,\frac{Dv_k(t,x)}{\phi^{\circ}(x,Dv_k(t,x))}\right),
\end{align*}
which is well defined since $Dv_k$ is an $L^1(\Omega)$ function. We consider the family of pushforward measures $\mu_k:=(\theta_k)_\sharp(\vert\sqrt{K(x)} Dv_k\vert_\phi\mathcal L^{d+1}_{|(0,T)\times\Omega})=(\theta_k)_\sharp\left(\phi^{\circ}\left(x,\sqrt{K(x)}Dv_k(t,x)\right)\mathcal L^{d+1}_{|(0,T)\times\Omega}\right)$. Note that the measures $\mu_k$ are defined over the set $S:=\bigcup_{(t,x)\in(0,T)\times\Omega}\{(t,x)\}\times \partial B_{\phi^\circ(x)}$. By the definition of $\mu_k$, it holds 
\begin{align}
&\nonumber\int_{S} \xi(t,x,p) d\mu_k\\&=\int_{(0,T)\times\Omega}\xi\left(t,x,\frac{Dv_k(x)}{\phi^{\circ}(x,Dv_k(t,x))}\right)\phi^{\circ}(x,\sqrt{K(x)}Dv_k(t,x))\diff x\diff t
    \label{integral}
\end{align}
for any $\xi\in C_c(S)$. we also consider
$$
\theta_0(t,x):=\left(t,x,\frac{\nu_{v_0}(t,x)}{\phi^{\circ}(x,\nu_{v_0}(t,x))}\right),
$$
with $\nu_{v_0}=\frac{Dv_0}{\vert Dv_0\vert}$, i.e., $\nu_{v_0}$ is the Radon-Nykodym derivative of $Dv_0$ with respect to its total variation. We then introduce
$$\mu_0:=(\theta_0)_\sharp (\mathcal L^1_{\vert (0,T)}\otimes \vert \sqrt{K(x)} D v_0(t)\vert_\phi)=(\theta_0)_\sharp (\mathcal L^1_{\vert(0,T)}\otimes \phi^{\circ}(x,\nu_{v_0}(t,x))\vert \sqrt{K(x)} D v_0(t)\vert),$$ so that we have 
\begin{align}
\int_{S} \xi(t,x,p) d\mu_0=\int_{(0,T)\times\Omega}\xi\left(t,x,\frac{\nu_{v_0}(t,x)}{\phi^{\circ}(x,\nu_{v_0}(t,x))}\right)\phi^{\circ}(x,\nu_{v_0}(t,x))d\left\vert \sqrt{K(x)} Dv_0(t)\right\vert\diff t
    \label{integral}
\end{align}
for any $\xi\in C_c(S)$. By the regularity and assumption \eqref{van} of $F$, we see that $F$ is an admissible test function, so that the identities above also hold with $\xi=F$. Therefore, recalling assumption \eqref{fond} and the fact that $F$ is positively 1-homogeneous with respect to the third argument $p$, the claim \eqref{fond} follows if we show that $\mu_k\overset{*}{\rightharpoonup} \mu_0$ in  measure as $k\to\infty$. Now, note that, by assumption \eqref{fond}, it holds
\begin{align*}
    \mu_k(S)=\int_{(0,T)\times\Omega}\phi^{\circ}(x,\sqrt{K(x)}Dv_k(t,x))\diff x\diff t\leq C,\quad \forall k\in\mathbb N.
\end{align*}
Therefore, since $\mu_k$ is a nonnegative Radon measure, there exists a Radon measure $\mu$ such that $\mu_k\overset{*}{\rightharpoonup} \mu$ in measure as $k\to \infty$. We only need to show that $\mu=\mu_0$. By the disintegration of measures theorem applied to $\mu_k$ (see, e.g., \cite[Theorem 2.4]{CicaleseNagasePisante}), we can set $X=S$, $Y=(0,T)\times\Omega$ and consider the Borel map $\pi: X\to Y$ such that $\pi(t,x,p)=(t,x)$, setting $\omega=\pi_\sharp \mu$, so that the exists the nonnegative Radon (probability) measure $\lambda_{t,x}$ defined on the space $\{(t,x)\}\times \partial B_{\phi^\circ(x)}$ for $\omega$-a.a. $(t,x)\in (0,T)\times\Omega$,  such that for every Borel function $f: X\to[0,+\infty]$, it holds 
\begin{align*}
    \int_S f(t,x,p)d\mu=\int_{(0,T)\times\Omega}\left(\int_{\{(t,x)\}\times \partial B_{\phi^\circ(x)}}f(t,x,p)d\lambda_{t,x}(t,x,p)\right)d\omega(t,x).
\end{align*}
Note that, since the variable of integration in the second integral is $p$, with a slight abuse of notation we simply write 
\begin{align*}
    \int_S f(t,x,p)d\mu=\int_{(0,T)\times\Omega}\left(\int_{\partial B_{\phi^\circ(x)}}f(t,x,p)d\lambda_{t,x}(p)\right)d\omega(t,x).
\end{align*}
We now need to identify $\omega=\mathcal L^1_{|(0,T)}\otimes \vert \sqrt{K(x)}Dv_0\vert_\phi(t)$, and $\lambda_{t,x}=\delta\left(\frac{\nu_{v_0}(t,x)}{\phi^{\circ}(x,\nu_{v_0}(t,x))}\right)$.

Assume $h\in C_c((0,T)\times\Omega;\mathbb R^d)$, so that we can use the factorized function $f(t,x,p)=h(t,x)\cdot p$ in the equation above and obtain 
\begin{align}
   \label{deomposition} \int_S h(t,x)\cdot p d\mu=\int_{(0,T)\times\Omega} h(t,x)\cdot \left(\int_{\partial B_{\phi^\circ(x)}}p d\lambda_{t,x}(p)\right)d\omega(t,x).
\end{align}
Now, by the weak* convergence of $\mu_k$ and the weak* convergence of $\mathcal L^1_{|(0,T)}\otimes \sqrt{K(\cdot)}Dv_k(t)$ to $\mathcal L^1_{|(0,T)}\otimes \sqrt{K(\cdot})Dv_0(t)$ in $\mathcal M((0,T)\times\Omega)$ (given by an application of Lemma \ref{technical}, recalling $K\in C(\Omega)$ and $K\geq K_*>0$), we can obtain
\begin{align*}
    &\int_S h(t,x)\cdot p d\mu\\&
    =\lim_{k\to \infty} \int_S h(t,x)\cdot p d\mu_k\\&
    =\lim_{k\to\infty} \int_0^T\int_\Omega h(t,x)\cdot \frac{Dv_k(t,x)}{\phi^{\circ}(x,Dv_k(t,x))}d\vert \sqrt{K(x)}Dv_k\vert_{\phi}\\&=\lim_{k\to \infty}\int_0^T\int_\Omega h(t,x)\cdot  \sqrt{K(x)}Dv_k(t,x)\diff x\diff t\\&
    =\lim_{k\to \infty}\int_0^T\int_\Omega h(t,x) d(\sqrt{K(x)}Dv_k)\diff t=\int_0^T\int_\Omega h(t,x) d (\sqrt{K(x)}Dv_0)\diff t.
\end{align*}
By this result and the decomposition \eqref{deomposition}, we immediately deduce, since $h$ is arbitrary, that 
$$
\left(\int_{\partial B_{\phi^\circ(x)}}p d\lambda_{t,x}(p)\right)d\omega(t,x)=d\ \mathcal L^1_{|(0,T)}\otimes \sqrt{K(x)} Dv_0(t)=\nu_{v_0}d\ 
\mathcal L^1_{|(0,T)}\otimes \vert \sqrt{K(x)} Dv_0\vert (t),
$$
entailing that $\mathcal L^1_{|(0,T)}\otimes \vert \sqrt{K(x)} Dv_0\vert_\phi(t) <<\omega$. Indeed, since $\vert \nu_{v_0}\vert\equiv 1$, we multiply both the sides of the identity above by $\phi^{\circ}(x,\nu_{v_0})\nu_{v_0}$ to obtain 
$$
\phi^{\circ}(x,\nu_{v_0}(t,x))\nu_{v_0}(t,x)\cdot \left(\int_{\partial B_{\phi^\circ(x)}}p d\lambda_{t,x}(p)\right)d\omega(t,x)=d\ \mathcal L^1_{|(0,T)}\otimes \vert \sqrt{K(x)} Dv_0\vert_\phi (t),
$$
giving the result. Thus there exists an $\omega$-measurable function $\gamma:(0,T)\times\Omega\to \mathbb R^+$ such that $ \mathcal L^1_{|(0,T)}\otimes \vert \sqrt{K(x)}Dv_0\vert_\phi(t)=\gamma \omega$. Therefore, we have
$$
\phi^{\circ}(x,\nu_{v_0})\left(\int_{\partial B_{\phi^\circ(x)}}p d\lambda_{t,x}(p)\right)d\omega(t,x)=\nu_{v_0}d\mathcal L^1_{|(0,T)}\otimes \vert \sqrt{K(x)} Dv_0\vert_\phi(t)=\nu_{v_0}\gamma d\omega(t,x),
$$
so that, for $\omega$-a.e. $ (t,x)\in (0,T)\times\Omega$, 
\begin{align}
\int_{\partial B_{\phi^\circ(x)}}p d\lambda_{t,x}(p)=\frac{\nu_{v_0}}{\phi^{\circ}(x,\nu_{v_0})}\gamma(t,x),\label{gam2}
\end{align}
which means, by applying $\phi^{\circ}$ (in the second argument) to both the sides of the equality and recalling that $\phi^{\circ}(x,p)$ is positively 1-homogeneous with respect to $p$,
\begin{align}
\phi^{\circ}\left(x,\int_{\partial B_{\phi^\circ(x)}}p d\lambda_{t,x}(p)\right)=\gamma(t,x).\label{gam1}
\end{align}
In conclusion, as in \cite[(3.37)]{CicaleseNagasePisante}, thanks to the convergence \eqref{fond} and the weak* convergence of $\mu_k$, we infer, for any $\xi\in C_c((0,T)\times\Omega)$,
\begin{align*}
   &\int_{(0,T)\times\Omega} \xi(t,x)\left(\int_{\partial B_{\phi^\circ(x)}} \phi^{\circ}(x,p)d\lambda_{t,x}(p)\right)d\omega(t,x)\\&=\int_{(0,T)\times\Omega}\xi(t,x) \left(\int_{\partial B_{\phi^\circ(x)}} d\lambda_{t,x}(p)\right)d\omega(t,x)\\&=\int_{S}\xi(t,x)d\mu=\lim_{k\to\infty}\int_S \xi(t,x)d\mu_k\\&=\lim_{k\to\infty}\int_{(0,T)\times\Omega}\xi(t,x)\phi^{\circ}(x,\sqrt{K(x)} Dv_k(t,x))\diff x\diff t\\&=\int_{(0,T)\times\Omega}\xi(t,x)\phi^{\circ}(x,\nu_{v_0})d \vert \sqrt{K(x)}Dv_0\vert \diff t\\&=\int_{(0,T)\times\Omega}\xi(t,x)d \vert \sqrt{K(x)}Dv_0\vert_\phi \diff t=\int_{(0,T)\times\Omega}\xi(t,x)\gamma(t,x) d\omega(t,x)\\&
   =\int_{(0,T)\times\Omega}\xi(t,x)\phi^{\circ}\left(x,\int_{\partial B_{\phi^\circ(x)}}p d\lambda_{t,x}(p)\right) d\omega(t,x),
\end{align*}
entailing, since $\xi$ is arbitrary, for $\omega$-a.e. $(t,x)\in (0,T)\times\Omega$,
\begin{align*}
   \int_{\partial B_{\phi^\circ(x)}} \phi^{\circ}(x,p)d\lambda_{t,x}(p)= \phi^{\circ}\left(x,\int_{\partial B_{\phi^\circ(x)}}p d\lambda_{t,x}(p)\right),
\end{align*}
and since $\phi^{\circ}(x,\cdot)$ is strictly convex for any fixed $x\in \Omega$, it is well known that $\lambda_{t,x}$ must be a Dirac $\delta$ probability measure, namely it holds $\lambda_{t,x}=\delta\left(\frac{q}{\phi^{\circ}(x,q)}\right)$, for some $q\in \mathbb R^d$. To identify $q$, we come back to \eqref{gam1}, to see that it must be $\gamma\equiv 1$. Then, from \eqref{gam2}, we deduce
$$
\frac{q}{\phi^{\circ}(x,q)}=\frac{\nu_{v_0}(t,x)}{\phi^{\circ}(x,\nu_{v_0}(t,x)},
$$
entailing that whatever $q\in \mathbb R^d$ we choose, we always have that $\lambda_{t,x}=\delta\left(\frac{\nu_{v_0}(t,x)}{\phi^{\circ}(x,\nu_{v_0}(t,x))}\right)$. This concludes the proof.

\section{Numerical simulations}
\label{numerical}
\paragraph{Numerical settings.} To give an idea of the evolution of an anisotropic Cahn-Hilliard equation, we propose some numerical simulations. Here, we consider the following version of the anisotropic Cahn-Hilliard equation used in~\cite{dziwnik2016existence}:
\begin{align*}
\frac{\partial u}{\partial t} &= \Div (m(u)\,\nabla \mu), \\
\mu &= -\eps\,\Div(\,B(\nabla u)\,\nabla u)+\f{1}{\eps} F'(u), 
\end{align*}
where we set $m(u)=1$ for simplicity. $B(\nabla u)$ is a matrix encoding the anisotropy. Here 

\[
B(\nabla u)
 =
 \begin{bmatrix}
   \gamma(\theta)^2 & -\gamma'(\theta)\,\gamma(\theta)\\
   \gamma'(\theta)\,\gamma(\theta) & \gamma(\theta)^2
 \end{bmatrix},
\]
with 
\[
\gamma(\theta) = 1 + G \,\cos(n\,\theta).
\]
Here $\theta$ is the angle between the $x$-axis and the interface normal, whereas $G$ is the strength of the anisotropy. In~\cite{dziwnik2016existence}, the authors show that $G$ must be small enough to prove existence of weak solutions. Observe that $G=0$ corresponds to the isotropic Cahn-Hilliard equation, whereas  $n$ adjusts the symmetry. In two dimensions we have
\[
\theta = \arctan(\partial_y u / \partial_x u).
\]

For simplicity, we update the time derivative  using an explicit Euler scheme:
\[
u^{n+1} = u^n + \Delta t\,  \Div  (m(u^n)\nabla \mu^n),
\]
though one must be cautious with stability. If $\eps$ is small, steep gradients appear in $u$, making the problem stiff. Thus, a sufficiently small time step $\Delta t$ is usually necessary to avoid numerical blow-up. Semi-implicit methods can be used to improve stability. Our goal is to show how the anisotropy can affect the shape of the solutions. Therefore we perform simulations in a range of parameter for $G$. In the following numerical simulations we initialize $u$ randomly in a narrow band around $1/2$. We let the system evolve with time step $\Delta t = 0.001$ and number of steps $N_{T}=5000$ over a grid of size $80\times 80$ with $N_x=100$ grid mesh points on the $x$ dimension and $N_y=100$ points on the $y$ dimension. We plot the simulations for $G=[0.2, 0.5]$ and $\eps=0.7$. 
\begin{figure}[H]
\centering
\includegraphics[scale=0.75]{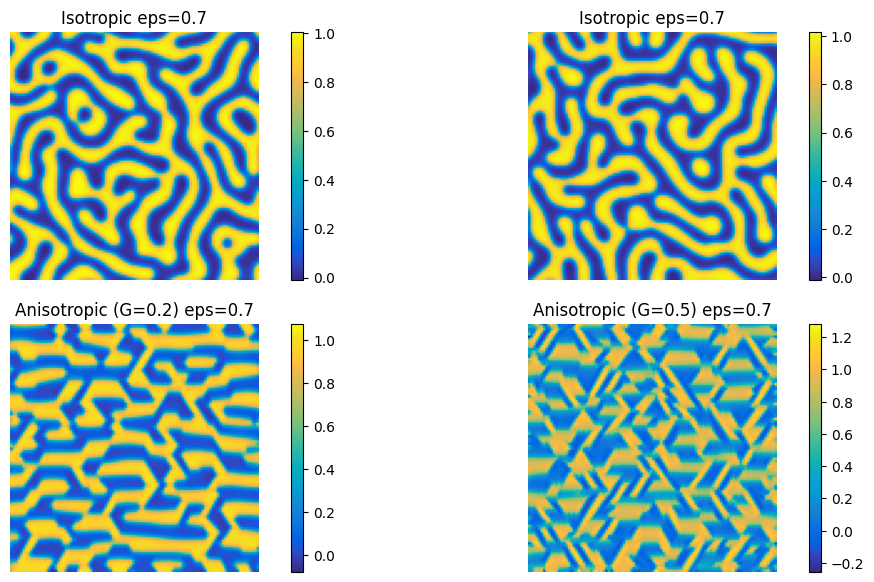}
\caption{Comparison between the isotropic and anisotropic Cahn-Hilliard equation for different values of $G$.}
\end{figure}

\paragraph{Effect of Anisotropy ($G\neq 0$).} 
In the simulations with $G=0$ (isotropic version), the interfaces evolve into  smoothly curved shapes, as one expects from isotropic curvature. When $G\neq 0$, the interfaces have directional biases. For instance, it can develop corners.

\paragraph{Role of Small $\eps$.} 
As $\eps$ becomes smaller, the interface width becomes thinner, so the transition between $u\approx 0$ and $u\approx 1$ becomes sharper. In the numerical simulations, this transition is represented by the thin green line surrounding the domains. Numerically, capturing these sharper interfaces precisely would require thinner spatial meshes or adaptive methods.

\paragraph{Instabilities and Coarsening.}
During the evolution, small domains often shrink and disappear, while larger ones grow, minimizing the total free energy. Anisotropy can accelerate or alter this coarsening process by changing how (anisotropic) curvature forces act along different directions.

\end{appendices}

\bibliographystyle{siam}
\bibliography{merged}

\end{document}